\DeclarePairedDelimiter{\floor}{\lfloor}{\rfloor}
\newsavebox{\@brx}
\newcommand{\llangle}[1][]{\savebox{\@brx}{\(\m@th{#1\langle}\)}%
  \mathopen{\copy\@brx\kern-0.6\wd\@brx\usebox{\@brx}}}
\newcommand{\rrangle}[1][]{\savebox{\@brx}{\(\m@th{#1\rangle}\)}%
  \mathclose{\copy\@brx\kern-0.6\wd\@brx\usebox{\@brx}}}
\newcommand{\VERT}[1][]{\savebox{\@brx}{\(\m@th{#1|}\)}%
 \mathopen{\copy\@brx\kern-0.6\wd\@brx\copy\@brx\kern-0.6\wd\@brx\usebox{\@brx}}}
\newcommand{\VERTT}[1][]{\savebox{\@brx}{\(\m@th{#1|}\)}%
 \mathopen{\copy\@brx\kern-0.6\wd\@brx\copy\@brx\kern-0.6\wd\@brx\copy\@brx\kern-0.6\wd\@brx\usebox{\@brx}}} 
\newcommand{\RECT}[1][]{\savebox{\@brx}{\(\m@th{#1[\hspace{-0.3mm}]}\)}}
\newcommand{\bN}{\mathbb{N}}
\newcommand{\bE}{\mathbb{E}}
\newcommand{\bZ}{\mathbb{Z}}
\newcommand{\bR}{\mathbb{R}}
\newcommand{\bT}{\mathbb{T}}
\newcommand{\bM}{\mathbb{M}}
\newcommand{\cE}{\mathcal{V}_\rt}
\newcommand{\cV}{\mathcal{V}}
\newcommand{\cP}{\mathcal{P}}
\newcommand{\cX}{\mathcal{X}}
\newcommand{\cK}{\mathcal{K}}
\newcommand{\fA}{\mathbf{A}}
\newcommand{\fB}{\mathbf{B}}
\newcommand{\fI}{\mathbf{I}}
\newcommand{\fQ}{\mathbf{Q}}
\newcommand{\fP}{\mathbf{P}}
\newcommand{\fR}{\mathbf{R}}
\newcommand{\fT}{\mathbf{T}}
\newcommand{\sS}{\mathscr{S}}
\newcommand{\sC}{\mathscr{C}}
\newcommand{\sB}{\mathscr{B}}
\newcommand{\ri}{\mathrm{i}}
\newcommand{\rd}{\mathrm{d}}
\newcommand{\rb}{\mathrm{b}}
\newcommand{\ry}{\mathrm{y}}
\newcommand{\rt}{\mathrm{t}}
\newcommand{\rD}{\mathrm{D}}
\newcommand{\vI}{\mathtt{I}}
\newcommand{\vJ}{\mathtt{J}}
\newcommand{\vK}{\mathtt{K}}
\newcommand{\vL}{\mathtt{L}}
\newcommand{\vM}{\mathtt{M}}
\newcommand{\MM}{\mathfrak{m}}
\DeclareMathOperator{\supp}{supp}
\colorlet{symbols}{black}
\tikzset{
dot/.style={circle,fill=black,inner sep=0pt, minimum size=1mm},
tdot/.style={circle,fill=black,inner sep=0pt, minimum size=0.5mm},
odot/.style={circle,draw,black,inner sep=0pt,minimum size=1mm,path picture={
\fill (0,0) circle[radius=0.15mm];},fill=white,node contents={}},
Odot/.style={circle,draw,black,inner sep=0pt,minimum size=1.4mm,path picture={
\fill (0,0) circle[radius=0.2mm];},fill=white,node contents={}},
tri/.style={isosceles triangle,
    isosceles triangle apex angle=60,fill=white,rotate=90,inner sep=0pt,draw,minimum size=1.4mm},
sss/.style={diamond,inner sep=0pt,draw,minimum size=1.4mm}    
}
\tikzset{snake it/.style={decorate, decoration=snake}}
\def\DefTree#1#2{\expandafter\gdef\csname tree#1\endcsname{\tikz[scale=0.2,baseline=-1mm,draw=symbols]{#2}}}
\def\<#1>{\csname tree#1\endcsname}
\newcommand*\botimes{{\mathpalette\botimes@{1.5}}}
\newcommand*\botimes@[2]{\mathbin{\vcenter{\hbox{\scalebox{#2}{\hspace{0.0mm}$\m@th#1\otimes$\hspace{0.0mm}}}}}}
\newcommand*\Cdot{{\mathpalette\Cdot@{.6}}}
\newcommand*\Cdot@[2]{\mathbin{\vcenter{\hbox{\scalebox{#2}{\hspace{0.5mm}$\m@th#1\bullet$\hspace{0.5mm}}}}}}
\newtheorem{thm}{Theorem}
\newtheorem{lem}[thm]{Lemma}
\newtheorem{cor}[thm]{Corollary}
\newtheorem{ex}[thm]{Exercise}
\theoremstyle{remark}
\newtheorem{rem}[thm]{Remark}
\theoremstyle{definition}
\newtheorem{dfn}[thm]{Definition}
\numberwithin{thm}{section}
\definecolor{darkblue}{rgb}{0.0,0.0,0.5}
\begin{document}
\title*{Lecture notes on the flow equation approach to singular stochastic PDEs}
\titlerunning{Flow equation approach to singular stochastic PDEs}
\author{Pawe{\l} Duch}
\authorrunning{P. Duch}
\institute{Pawe{\l} Duch \at EPFL, Switzerland, \email{pawel.duch@epfl.ch}}

\maketitle

\abstract{The flow equation approach is a robust framework applicable to a broad class of singular SPDEs, including those with fractional Laplacians, throughout the entire subcritical regime. Inspired by Wilson's renormalization group, this method studies the coarse-grained process, which captures the behavior of solutions across spatial scales. The corresponding flow equation describes how the nonlinear terms in the effective dynamics evolve with the coarse-graining scale, playing a role analogous to the Polchinski equation in quantum field theory. The renormalization problem is then solved inductively by imposing appropriate boundary conditions on the flow equation.
\keywords{Singular SPDEs $\cdot$ Flow equation $\cdot$ Renormalization group $\cdot$ Stochastic quantization}}

\section{Singular subcritical SPDEs}

Stochastic partial differential equations (SPDEs) arise naturally in the mathematical description of systems subject to random influences, such as fluctuating interfaces, turbulent fluids, or fields in statistical mechanics and quantum field theory. They provide a framework for studying the interplay between deterministic dynamics and randomness, often serving as continuum limits of large stochastic systems or as models of fluctuating physical quantities at small scales.

While many SPDEs can be treated using classical analytic techniques, a particularly challenging class, known as singular SPDEs, has emerged as a central object of modern probability theory and mathematical physics. These equations are \emph{singular} in the sense that their solutions are too irregular for the nonlinear terms appearing in the equations to be well-defined in the classical sense. In particular, the driving noises (such as the space-time white noise) are so rough that even the notion of a product of the solution with itself may not make sense. This issue mirrors the ultraviolet divergences encountered in quantum field theory, where renormalization techniques are required to define interacting models.

A prominent motivation for studying singular SPDEs comes from stochastic quantization, a program introduced by Parisi and Wu~\cite{parisi1981} in the early 1980s to construct quantum field theories as invariant measures of stochastic dynamics. The corresponding evolution equations---such as the dynamical $\Phi^4_d$ and Sine-Gordon models---are prototypical examples of singular SPDEs. These models also serve as paradigms for understanding universality classes of stochastic systems, including the celebrated Kardar--Parisi--Zhang (KPZ) equation, which describes random interface growth.

Over the last decade, several groundbreaking theories have been developed to give rigorous meaning to singular SPDEs and to construct their solutions in a systematic way. Among these, the theory of regularity structures introduced by Hairer~\cite{hairer2014structures}, and the paracontrolled distributions approach developed by Gubinelli, Imkeller, and Perkowski~\cite{gubinelli2015}, stand as two landmark frameworks. Both methods combine analytic and probabilistic ideas to extend the scope of classical PDE theory into the singular regime.

The present chapter focuses on an approach to singular SPDEs grounded in the renormalization group (RG) method. Originating from Wilson’s seminal ideas~\cite{wilson1971}, this framework analyzes how the effective description of an SPDE transforms across scales, determining which terms require renormalization to obtain a mathematically consistent solution theory. The RG perspective on singular SPDEs was first rigorously developed by Kupiainen~\cite{kupiainen2016rg,kupiainen2017kpz}, who employed a discrete RG scheme to construct solutions to the $\Phi^4_3$ model and the KPZ equation. The flow equation formalism~\cite{Du21,Du22} provides a continuous-scale counterpart of this philosophy, yielding a unified and systematic approach to singular SPDEs throughout the full subcritical regime.

We restrict attention to equations driven by white noise (or its periodic version). Recall that the white noise $\xi$ on $\mathbb{R}^n$ is the centered Gaussian random distribution taking values in $\mathcal{S}'(\mathbb{R}^n)$ and characterized by
$$\bE(\langle\xi,f\rangle\langle\xi,g\rangle)=\int_{\bR^n} f(x)g(x)\,\rd x$$
for all $f,g\in\sS(\bR^n)$. We now list a few canonical examples of singular SPDEs that arise in this setting:
\begin{enumerate}
\item[(1)] Dynamical $\Phi^4_d$ model, also known as the parabolic stochastic quantization equation of the $\Phi^4_d$ model, with $d\in\{2,3\}$ -- a parabolic SPDE of the form
\begin{equation}
 (\partial_t+1-\Delta)\varPhi = \xi - \lambda\,\varPhi^3 + \infty\,\varPhi 
\end{equation}
posed in spacetime $\bR_+\times\bR^d$ driven by the spacetime white noise $\xi$.
 
\item[(2)] Dynamical fractional $\Phi^4_{d,\sigma}$ model with $d\in\{2,3,4\}$ and $\sigma\in(d/2,d]$ -- a parabolic non-local SPDE of the form
\begin{equation}\label{eq:intro_fractional_Phi_4}
 (\partial_t+1+(-\Delta)^{\sigma/2})\varPhi = \xi - \lambda\,\varPhi^3 + \infty\,\varPhi 
\end{equation}
posed in spacetime $\bR_+\times\bR^d$ driven by the spacetime white noise $\xi$ and involving the fractional Laplacian $(-\Delta)^{\sigma/2}$ of order $\sigma$. Note that for $\sigma=2$ the above SPDE coincides with the standard dynamical $\Phi^4_d$ model. Formally, the $\Phi^4_{d,\sigma}$ measure
$$
 \mu(\rd\phi) = \frac{1}{\mathcal{Z}} \exp\left(-\int_{\bR^d} \left(\phi\big(1+(-\Delta)^{\sigma/2}\big)\phi +\lambda\,\phi^4/2-\infty\,\phi^2\right)\rd x \right) \,\rd\phi,
$$
is invariant for the above SPDE, i.e. if $\varPhi$ is a solution of the above SPDE with the initial condition $\varPhi(0,\Cdot)=\phi$ distributed according to the above measure, then $\varPhi(t,\Cdot)$ is also distributed according to this measure for all $t\in\bR_+$.
 
\item[(3)] Elliptic stochastic quantization equation of $\Phi^4_d$ model with $d\in\{2,3\}$ -- an elliptic SPDE of the form
\begin{equation}
 (1-\Delta)\varPhi = \xi - \lambda\,\varPhi^3 + \infty\,\varPhi 
\end{equation}
posed in space $\bR^2\times\bR^d$ driven by the white noise $\xi$. Formally, the solution of the above equation restricted to the codimension-two hyperplane $\varPhi(0,0,\Cdot)$ is distributed according to the $\Phi^4_{d,\sigma=2}$ measure.

\item[(4)] Dynamical Sine-Gordon model with $\beta\in(0,\sqrt{8\pi})$ -- a parabolic SPDE of the form
\begin{equation}
 (\partial_t+1-\Delta)\varPhi = \xi - \lambda\,\infty\,\beta\,\sin(\beta\varPhi) 
\end{equation}
posed in spacetime $\bR_+\times\bR^2$ driven by the spacetime white noise $\xi$. Formally, the measure
$$
 \mu(\rd\phi) = \frac{1}{\mathcal{Z}} \exp\left(-\int_{\bR^2} \Big(\phi(1-\Delta)\phi +2\lambda\,\infty\,\cos(\beta\phi)\Big)\rd x \right) \,\rd\phi
$$
is invariant for the above SPDE.
\end{enumerate}

All of the above SPDEs are singular whenever $\lambda \neq 0$. In the linear case $\lambda = 0$, the equations can be solved explicitly, but the resulting solution is a random Schwartz distribution rather than a genuine function. This simple observation already signals the central difficulty: for $\lambda \neq 0$, we must look for solutions in a space of distributions. However, since multiplication of distributions is in general ill-defined, the nonlinear terms appearing in these equations lack an a priori meaning. 

The standard strategy to overcome this problem is to introduce an ultraviolet (UV) regularization---that is, to smooth out the noise or the Green’s function at small spatial scales. One then aims to show that, after performing suitable renormalizations of the nonlinear terms, a well-defined and nontrivial limit exists as the regularization is removed. The renormalization procedure compensates for the divergences arising at small scales by subtracting suitable local terms with diverging prefactors (formally denoted above by ``infinite'' quantities), thereby ensuring that the limiting solution remains finite and mathematically meaningful.

In this chapter we focus on a representative example, namely the elliptic SPDE
\begin{equation}\label{eq:singular_elliptic}
	(1-\Delta)^{\sigma/2}\varPhi = \xi + \lambda\,\varPhi^3 - \infty\,\varPhi,
\end{equation}
posed on $\bR^d$ with $2\pi$-periodic white noise $\xi$. We consider
$d\in\{1,\ldots,6\}$ and $\sigma\in(d/3,d/2]$, which is the singular
subcritical regime for this cubic equation. In this range the equation is
genuinely ill-defined before renormalization, but the renormalized theory is
still expected to be mathematically well posed. The concrete case
$d=5$, $\sigma=2$ may be kept in mind as a useful model example, because it is
closely analogous to the dynamical $\Phi^4_3$ model. Although~\eqref{eq:singular_elliptic} is not motivated by a direct physical
application, it provides a clean testing ground for the renormalization group
(RG) approach. The methods developed here are designed to extend, at least in
principle, to broader classes of singular SPDEs, including those introduced
above.

To give meaning to~\eqref{eq:singular_elliptic}, we first regularize the driving noise. Let $\vartheta \in C^\infty(\mathbb{R}^d)$ be an even, nonnegative function supported in the unit ball and normalized so that $\int \vartheta = 1$. For $\kappa \in (0,1]$, define the rescaled mollifier
\begin{equation}\label{eq:mollifier}
	\vartheta_\kappa(x):=[\kappa]^{-d}\vartheta(x/[\kappa]),
	\qquad
	[\kappa]:=\kappa^{1/\sigma}.
\end{equation}
The function $\vartheta_\kappa$ is supported in a ball of radius $[\kappa]$ and converges to the Dirac delta as $\kappa\searrow0$. For $\kappa\in(0,1]$ we define the regularized noise by $\xi_\kappa:=\vartheta_\kappa\ast\xi\in C^\infty(\bR^d)$, where $\ast$ denotes the convolution. We have $\xi_\kappa\in C^\infty(\bR^d)$ for all $\kappa\in(0,1]$ and $\lim_{\kappa\searrow0}\xi_\kappa=\xi\in\sS'(\bR^d)$ almost surely. We rewrite the singular SPDE~\eqref{eq:singular_elliptic} in the following regularized mild form
\begin{equation}\label{eq:intro_mild}
 \varPhi = G \ast F_\kappa[\varPhi],\qquad \kappa\in(0,1],
\end{equation}
where $G\in L^1(\bR^d)$ is the fundamental solution for the pseudo-differential operator \mbox{$(1-\Delta)^{\sigma/2}$}. The functional $F_\kappa[\varphi]$, called the force, is defined by
\begin{equation}\label{eq:force}
 F_\kappa[\varphi](x):= \xi_\kappa(x) 
 +\lambda \varphi^3(x)
 +\sum_{i=1}^{i_\sharp}\lambda^i c_\kappa^{(i)} \varphi(x),
\end{equation}
where $i_\sharp:=\floor{\sigma/(3\sigma-d)}$. The additional terms
\[
\lambda^i c_\kappa^{(i)} \varphi
\]
are called the \emph{mass counterterms}, while the coefficients
\[
c_\kappa^{(i)} \in \bR
\]
are called the \emph{mass renormalization constants}. These constants are chosen so as to cancel the divergences generated by the nonlinear interaction in the limit \(\kappa \searrow 0\). The number of counterterms required to renormalize the cubic nonlinearity increases without bound as \(\sigma \searrow d/3\). Consequently, the renormalization problem becomes progressively more intricate near the critical threshold \(\sigma = d/3\), where the equation ceases to be subcritical.

We can now formulate the main result of this chapter, which establishes that, after suitable renormalization, the family of regularized equations~\eqref{eq:intro_mild} admits a well-defined and nontrivial limit as $\kappa \searrow 0$.

\begin{thm}\label{thm:main}
	Let $d\in\{2,\ldots,6\}$ and $\sigma\in(d/3,d/2]$. There exists a choice of mass renormalization constants, and there exist random variables $\lambda_\star\in[0,1]$ and $\varPhi_0\in \sS'(\bR^d)$ such that:
	\begin{enumerate}
		\item[(0)] for all \mbox{$\lambda\in[-\lambda_\star,\lambda_\star]$} and $\kappa\in(0,1]$ there exists a solution \mbox{$\varPhi_\kappa\in C^\infty(\bR^d)$} of~\eqref{eq:intro_mild},
		\item[(1)] $\varPhi_0=\lim_{\kappa\searrow0}\varPhi_\kappa$ almost surely in $\sS'(\bR^d)$, 
		\item[(2)] $\bE(\lambda_\star^{-n})<\infty$ for every $n\in\bN_+$.
	\end{enumerate}
\end{thm}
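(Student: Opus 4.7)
The plan is to follow the flow equation / renormalization group approach of~\cite{Du21,Du22}. The starting point is a smooth scale decomposition of the Green's function, $G = \int_0^1 \dot G_\mu\,\rd \mu + G_0$, with $\dot G_\mu$ localised at spatial scale $\mu^{1/\sigma}$, and a corresponding family of \emph{effective forces} $\fF_{\uv,\mu}[\varphi]$ indexed by scale $\mu\in[0,1]$ and cutoff $\uv\in(0,1]$. The functional $\fF_{\uv,\mu}$ is characterised by the Wilsonian property that at $\mu=1$ it coincides with the regularised force~\eqref{eq:force} (and so records the counterterms $c_\uv^{(i)}$), while for $\mu<1$ it is the effective nonlinearity obtained after integrating out modes above scale $\mu$. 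This property forces $\fF_{\uv,\mu}$ to satisfy a Polchinski-type flow equation in $\mu$, of the schematic form $\partial_\mu \fF_{\uv,\mu}[\varphi] = \mathcal{Q}(\dot G_\mu,\fF_{\uv,\mu}[\varphi])$ with $\mathcal{Q}$ bilinear in $\fF_{\uv,\mu}$ and linear in $\dot G_\mu$.

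The next step is to solve this flow perturbatively in two parameters, powers of $\lambda$ and polynomial degree in $\varphi$, by writing $\fF_{\uv,\mu}[\varphi]=\sum_{i,n}\lambda^i\,\fF_{\uv,\mu}^{(i,n)}[\varphi^{\otimes n}]$ and treating the flow equation as a triangular system of linear ODEs in $\mu$ for the kernels $\fF_{\uv,\mu}^{(i,n)}$ (each equation depends only on kernels of strictly lower order). Boundary conditions are imposed at the two endpoints: $\fF_{\uv,1}^{(0,0)}=\xi_\uv$ and $\fF_{\uv,1}^{(1,3)}=(\cdot)^3$ reproduce~\eqref{eq:force}; $\fF_{\uv,0}^{(i,n)}=0$ for every \emph{relevant/marginal} multi-index $(i,n)$, i.e.\ those whose natural parabolic scaling dimension is non-positive; and $\fF_{\uv,1}^{(i,n)}=0$ for every remaining $(i,n)$. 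Since $\sigma>\rdim/3$, only finitely many multi-indices are relevant or marginal, and a direct computation identifies them with $(i,1)$ for $i=1,\ldots,i_\sharp=\lfloor\sigma/(3\sigma-\rdim)\rfloor$; the counterterms are then defined by $c_\uv^{(i)}:=\fF_{\uv,1}^{(i,1)}$, matching exactly the structure of~\eqref{eq:force}.

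The main obstacle is to establish \emph{uniform-in-$\uv$} probabilistic bounds on the kernels $\fF_{\uv,\mu}^{(i,n)}$ in a family of scale-dependent weighted norms. The stochastic inputs are explicit multilinear functionals of $\xi_\uv$ built by contracting copies of $\dot G_\nu$ along trees; their moments are controlled by Gaussian hypercontractivity. The deterministic kernels are estimated by induction on $(i,n)$, using the flow equation, convolution estimates for $\dot G_\mu$, and — crucially — the observation that placing the boundary data at $\mu=0$ for the relevant/marginal components turns what would otherwise be logarithmic or power divergences in the $\mu$-integrals into finite quantities. Subcriticality, $3\sigma-\rdim>0$, is used at every interior node of the induction to extract a strictly positive power of $\mu$, ensuring that the $\mu$-integrals converge uniformly in $\uv$. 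A Cauchy argument then yields almost-sure limits $\fF_{0,\mu}^{(i,n)}:=\lim_{\uv\searrow0}\fF_{\uv,\mu}^{(i,n)}$ together with renormalised counterterms $c_0^{(i)}$.

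To conclude, the solution $\varPhi_\uv$ is recovered from $\fF_{\uv,0}$ via a Banach contraction argument applied to the mild equation~\eqref{eq:intro_mild} in a suitable scale-weighted Besov-type space. The a priori estimates needed for the contraction are exactly the bounds on $\fF_{\uv,\mu}^{(i,n)}$ from the previous step, and contraction holds on a random ball of radius $\asymp\lambda_\star^{1/2}$ whenever $|\lambda|\leq\lambda_\star$, where $\lambda_\star$ is defined as a negative power of a polynomial in the finite family of stochastic tree norms governing the bounds. Claims (0) and (1) then follow, with $\varPhi_0$ obtained as the almost-sure $\sS'(\bR^\rdim)$ limit of $\varPhi_\uv$. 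Claim (2), $\bE(\lambda_\star^{-n})<\infty$ for every $n\in\bN_+$, is a direct consequence of Gaussian hypercontractivity: each stochastic tree norm has finite moments of every order uniformly in $\uv$, so positive moments of the polynomial translate into finite negative moments of $\lambda_\star$.
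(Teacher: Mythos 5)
Your overall architecture (scale decomposition, effective force expanded in powers of $\lambda$ and of $\varphi$, a Polchinski-type flow solved as a triangular system, mixed boundary conditions, a contraction argument with $\lambda_\star$ a negative power of the stochastic norms, and hypercontractivity for claim (2)) is the same as the paper's, but the step that carries essentially all of the difficulty — the uniform-in-$\uv$ stochastic estimates — has genuine gaps. First, your identification of the relevant/marginal multi-indices with $(i,1)$, $i\le i_\sharp$, is incorrect: with $\varrho(i,m)=\alpha-\sigma-m\alpha+i\gamma$ one has $\varrho(1,0)=3(\sigma-d/2)\le0$ and $\varrho(1,2)=\sigma-d/2\le0$ throughout $\sigma\in(d/3,d/2]$, and further indices such as $(2,0)$, $(2,1)$, \ldots become relevant as $\sigma$ decreases; only the $(i,1)$ components admit counterterms. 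Second, your boundary-condition prescription cannot be implemented as stated: the coefficients $F^{i,m}_{\uv,\sIR}$ are \emph{random} kernels, and the only adjustable data in the flow are the finitely many deterministic constants $c^{(i)}_\uv$ multiplying $\delta^{[1]}$; one can therefore impose a renormalization condition only on the local part of the expectation $\bE F^{i,1}_{\uv,\sIR}$ (symmetry kills the expectations with $m$ even), not ``$\fF^{(i,n)}_{\uv,0}=0$'' for entire relevant kernels. Third, and most importantly, for the relevant coefficients that are \emph{not} renormalized (e.g. $(1,0)$, $(1,2)$) your inductive claim that subcriticality yields ``a strictly positive power of $\mu$ at every node'' fails: integrating the flow from the bare end gives $\int_0^\sIR[\uIR]^{\varrho_\varepsilon(i,m)-\sigma}\,\rd\uIR$, which is uniform in $\uv$ only when $\varrho(i,m)>0$ — this is exactly the point of Exercise~\ref{ex:irrelevant}; subcriticality only guarantees that \emph{finitely many} indices violate this, not that none do. Hypercontractivity reduces $n$-th moments to variances, but your proposal never identifies where the extra gain in the variances of the relevant coefficients comes from. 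In the paper this gain is supplied either by the flow equation for joint cumulants together with the volume factor $[\sIR]^{\rdim(\rn(\vI)-1)}$ and a Kolmogorov-type argument (Theorem~\ref{thm:cumulants}, Lemma~\ref{lem:probabilistic_bounds}), or, in Appendix~\ref{sec:alternative}, by splitting each coefficient into expectation plus fluctuation and exploiting the finite range of dependence (Lemma~\ref{lem:F_sigma_algebra}) to gain a factor $[\uv\vee\uIR]^{d/2}$ in the covariance before applying Nelson's estimate. Without one of these mechanisms (or a full BPHZ-type diagrammatic estimate tracking the counterterm cancellations inside every relevant tree), the central estimate behind the theorem is not proved.

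A more minor point: for fixed $\uv$ you cannot recover $\varPhi_\uv$ by running the contraction directly on the mild equation~\eqref{eq:intro_mild}; that only works for $|\lambda|\le\lambda_{\star,\uv}$ with $\lambda_{\star,\uv}\searrow0$ as $\uv\searrow0$. The paper instead solves the effective system via the map $\fQ$ of Lemma~\ref{lem:lift} (with $\lambda_\star$ independent of $\uv$) and then verifies a posteriori that the fixed point yields a solution of~\eqref{eq:intro_mild} (Lemma~\ref{lem:classical} and Exercise~\ref{ex:solution_bounded}); some version of this step is needed for your claim (0) as well.
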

\begin{proof}
	First note that by Theorem~\ref{thm:cumulants} and Exercise~\ref{ex:cumulants} the assumption of Lemma~\ref{lem:probabilistic_bounds} is satisfied. 
	By Lemma~\ref{lem:probabilistic_bounds} and Exercise~\ref{ex:convergence} one shows along the lines of the proof of Corollary~\ref{cor:stochastic_estimates} that the families of functionals $(\tilde F_{\kappa,\mu})_{\mu\in(0,1]}$ and $(\tilde H_{\kappa,\mu})_{\mu\in(0,1]}$ defined by~\eqref{eq:stopped_eff_force},~\eqref{eq:H_F} and~\eqref{eq:tilde_F_H_2} fulfill all the assumptions formulated in Lemma~\ref{lem:fixed_convergence}. The conclusion therefore follows directly from that lemma.
\end{proof}

The parameter \(\lambda\), which controls the strength of the nonlinearity, is required to be sufficiently small, and its maximal admissible size \(\lambda_\star\) is random and depends on the realization of the noise. The result is nevertheless highly nontrivial, since condition~(2) ensures that \(\lambda_\star\) admits inverse moments of all orders. In particular, \(\lambda_\star>0\) almost surely, so that for almost every realization of the noise there exists a nontrivial interval of admissible coupling constants around the origin.

As an aside, let us mention that for parabolic equations the flow equation approach can be used to construct solutions on a sufficiently small random time interval without any smallness assumption on the coupling constant. Let $\xi$ denote spacetime white noise on $\bR\times\bR^d$, periodized in the
spatial variables with period $2\pi$, and let $\xi_\kappa$ be its mollification at scale $\kappa\in(0,1]$. Consider the fractional parabolic SPDE
\begin{equation}\label{eq:intro_mild_parabolic}
	(\partial_t+(-\Delta)^{\sigma/2})\varPhi = \xi_\kappa
	+\lambda \varPhi^3
	+\sum_{i=1}^{i_\sharp}\lambda^i c_\kappa^{(i)} \varPhi,
	\qquad\mathrm{on}\quad \bR_+\times\bR^d,
\end{equation}
where $i_\sharp:=\floor{\sigma/(2\sigma-d)}$.

\begin{thm}\label{thm:parabolic}
	Let $d\in\{2,\ldots,4\}$, $\sigma\in(d/2,d]$ and $\lambda\in\bR$. For $\kappa\in(0,1]$ let $\varPhi_\kappa\in C(\bR_+,C^\infty(\bR^d))$ be the unique mild solution of~\eqref{eq:intro_mild_parabolic} with a suitable initial condition. There are mass renormalization constants, a stopping time $T_\star\in\bR_+$ and a random variable $\varPhi\in\sS'(\bR_+\times\bR^d)$ such that:
	\begin{enumerate}
		\item[(1)] for all deterministic $T>0$, $\lim_{\kappa\searrow0}\varPhi_\kappa=\varPhi$ almost surely in $\sS'((0,T)\times\bR^d)$ on the event $\{T_\star>T\}$,
		\item[(2)] $\bE(T_\star^{-n})<\infty$ for every $n\in\bN_+$.
	\end{enumerate}
\end{thm} 
When the coupling constant \(\lambda\) is negative, the cubic nonlinearity has a damping effect, and it is in fact possible to construct solutions globally in time. Establishing such a result requires suitable a priori estimates preventing finite-time blow-up of the solution. These arguments lie beyond the scope of the present chapter, and we refer the interested reader to~\cite{Du24b,GR23,EW24} for further details.

We conclude this introduction with a brief outline of the remainder of the
chapter, placing the material in the context of the relevant literature.

\begin{itemize}	
	\item Section~\ref{sec:effective_equation} introduces the flow equation
	approach in a systematic way. The original SPDE is reformulated as an
	effective equation involving an effective force, and we state the analytic
	conditions under which this reformulation is well posed.
	
	\item In Section~\ref{sec:effective_force}, we construct the effective force
	explicitly in terms of effective force kernels obtained from the flow
	equation. We then define the enhanced noise as the finite collection of
	these kernels and formulate the stochastic estimates required for the
	analysis.
	
	\item To control the enhanced noise, it is useful to study not only moments of
	the effective force kernels, but also their joint cumulants.
	Section~\ref{sec:cumulants_estimates} introduces joint cumulants and derives
	the flow equation governing their evolution. This equation plays a role
	analogous to the Polchinski equation~\cite{polchinski1984} in quantum field
	theory.
	
	\item In Section~\ref{sec:cumulants_uniform_bounds}, we use the cumulant flow
	equation to establish uniform bounds. The strength of the method becomes
	particularly visible here: once the counterterms are chosen according to the
	boundary conditions of the flow equation, the required bounds propagate
	automatically. This avoids the cumbersome tree-based inductive arguments
	which typically appear in traditional renormalization proofs.
	
	\item Section~\ref{sec:probabilistic} converts the cumulant estimates into
	almost sure bounds on the enhanced noise. The argument combines the
	moment--cumulant formula, deterministic smoothing estimates, and a
	two-parameter Kolmogorov-type argument.
	
	\item Section~\ref{sec:app} collects auxiliary lemmas and technical results needed to finalize the proof of the main theorem. This section can be skipped by readers primarily interested in the conceptual aspects of the flow approach.
	
	\item Section~\ref{sec:symbolic_index} provides a symbolic index collecting
	the main notation used throughout the chapter.
\end{itemize}

Material marked with the symbol $(\spadesuit)$ is of a technical nature and is
not essential for understanding the core ideas. Exercises marked with
$(\spadesuit)$ are intended to fill in gaps in proofs rather than introduce new
conceptual insights.

Let us also indicate how the approach developed here fits into the broader
theory of singular SPDEs. A general solution theory beyond the
Da~Prato--Debussche regime~\cite{daprato2003} was first developed by
Hairer~\cite{hairer2014structures} through the theory of regularity structures.
An alternative approach based on paracontrolled distributions was introduced
in~\cite{gubinelli2015}. The flow equation method discussed in these notes was
developed in~\cite{Du21,Du22} and is inspired by the renormalization group
perspective~\cite{wilson1971}, in particular by its formulation for singular
SPDEs in~\cite{kupiainen2016rg,kupiainen2017kpz}. Its main advantage is that it
applies directly throughout the subcritical regime and provides an alternative
to the regularity structures approach to renormalization developed
in~\cite{chandra2016bphz,bruned2019algebraic,bruned2021renormalising,BH23,HS23}
and~\cite{OSSW21,LOTT21,BOT25,BOS25}.

These notes are primarily based on~\cite{Du21,Du22}, while also drawing on
ideas from~\cite{GR23}, where the flow equation method was used to construct a
generalized $\Phi^4_3$ model with a fractional Laplacian in the entire
subcritical regime. See also~\cite{Du24b} for a pedagogical exposition. Recent
developments extend the method to singular SPDEs with general non-polynomial
nonlinearities and to rough differential equations~\cite{CF24a,CF24b}.
Furthermore,~\cite{BM25} shows that the renormalization scheme naturally
encoded by the flow equation formalism agrees with the BPHZ scheme of
regularity structures. For recent applications of the related Polchinski flow
equation in constructive quantum field theory, see
\cite{bauerschmidt2021,BBD23,GM24,Du24a,DFG25}.

\section{Effective equation}\label{sec:effective_equation}

The starting point of the flow equation approach is a reformulation of the original SPDE~\eqref{eq:intro_mild} in terms of an effective equation, an equivalent description that captures how the nonlinear interactions evolve under progressive coarse-graining. The key advantage of this reformulation, shared by all pathwise approaches to singular SPDEs, is that the resulting equation remains meaningful even in the limit where the regularization is removed, unlike the original ill-posed equation itself.

We begin by fixing the basic notation.

\begin{dfn}\label{dfn:basic}
	We use the notation
	\[
	\bM:=\bR^d,
	\qquad
	\bT:=(\bR/2\pi\bZ)^d.
	\]
	We identify functions on \(\bT\) with \(2\pi\)-periodic functions on \(\bM\). Thus
	\(C(\bT)\) is viewed as the subspace of \(C(\bM)\) consisting of continuous
	\(2\pi\)-periodic functions. For \(f\in L^\infty(\bM)\), we write
	\[
	\|f\|:=\|f\|_{L^\infty(\bM)}.
	\]
	Similarly, for \(p\in[1,\infty]\), we identify \(L^p(\bT)\) with the space of
	\(2\pi\)-periodic measurable functions \(f:\bM\to\bR\) such that
	\[
	\|f\|_{L^p(\bT)}
	:=
	\left(\int_{[-\pi,\pi)^d} |f(x)|^p\,\rd x\right)^{1/p}
	<\infty,
	\qquad p<\infty,
	\]
	with the usual modification
	\[
	\|f\|_{L^\infty(\bT)}
	:=
	\|f\|.
	\]
	The space \(\sS'(\bT)\) denotes the subspace of \(\sS'(\bM)\) consisting of periodic distributions.
\end{dfn}

We would like to construct a solution \(\varPhi_\kappa\) of the regularized
mild equation
\begin{equation}\label{eq:spde_F_mild}
 \varPhi_\kappa=G\ast F_\kappa[\varPhi_\kappa],
 \qquad
 F_\kappa[\varphi]:= \xi_\kappa 
 +\lambda\, \varphi^3
 +\sum_{i=1}^{i_\sharp}\lambda^i\, c_\kappa^{(i)} \varphi,
\end{equation}
and to prove that, for a suitable choice of the mass renormalization constants
\((c_\kappa^{(i)})_{i\in\{1,\ldots,i_\sharp\}}\), the limit
\[
\lim_{\kappa\searrow0}\varPhi_\kappa=:\varPhi_0
\]
exists in \(\sS'(\bM)\). Here the kernel \(G\in L^1(\bM)\) denotes the fundamental solution
of the operator \((1-\Delta)^{\sigma/2}\), and the local functional \(F_\kappa\) is called the
force.

In the regime \(\sigma\in(d/3,d/2]\) considered here, the limiting field
\(\varPhi_0\) is expected to be a distribution rather than a function. Therefore
the cubic term in \eqref{eq:spde_F_mild} has no canonical meaning as
\(\kappa\searrow0\), and the original mild equation becomes ill-defined in the
limit. The aim of this section is to replace \eqref{eq:spde_F_mild} by an
equivalent scale-dependent system with better limiting properties. More
precisely, for every fixed \(\kappa\in(0,1]\), the new system will be equivalent
to the regularized equation and, crucially, after a suitable choice of
counterterms, it will remain meaningful as \(\kappa\searrow0\).

We begin with the two basic ingredients of this reformulation. The first is a
scale decomposition of the Green function \(G\). This is a family of integrable
kernels
\[
G_\mu \in L^1(\bM),
\qquad \mu \in [0,1],
\]
such that \(G_0 = G\), \(G_1 = 0\), and the map
\[
[0,1] \ni \mu \longmapsto G_\mu \in L^1(\bM)
\]
is continuous and piecewise continuously differentiable. We denote its derivative by \(\dot G_\mu\). Given a force functional \(F_\kappa\), an \emph{effective force} is a family of functionals
\begin{equation}\label{eq:effective_force_intro}
F_{\kappa,\mu} : C(\bT) \to C(\bT),
\qquad \mu \in [0,1],
\end{equation}
such that \(F_{\kappa,0} = F_\kappa\) and, for every \(\varphi \in C(\bT)\), the map
\[
[0,1] \ni \mu \longmapsto F_{\kappa,\mu}[\varphi] \in C(\bT)
\]
is continuous and piecewise continuously differentiable. Moreover, we assume that each \(F_{\kappa,\mu}\) is of polynomial type.

\begin{dfn}\label{dfn:functional}
	A functional of polynomial type is a map $V\,:\,C(\bT)\to C(\bT)$ such that the directional derivatives of $V$ at $\varphi\in C(\bT)$ of order $k\in\bN_+$ along $\psi\in C(\bT)$, i.e.
	\begin{equation}
		\rD^k V[\varphi] \cdot \psi^{\otimes k}:= \partial_\tau^k V[\varphi + \tau \psi]\big|_{\tau=0},
	\end{equation}
	exist for all $k\in\bN_+$ and are non-zero for only finitely many $k\in\bN_+$.
\end{dfn}

Even if the original force \(F_\kappa\) is local, in the sense that \(F_\kappa[\varphi](x)\) depends only on the values of \(\varphi\) and its derivatives at the point \(x\), the corresponding effective force \(F_{\kappa,\mu}\) need not retain this property as nonlocal interactions are generated through convolution with the kernels \(G_\mu\).

In what follows, we will make a specific choice of a scale decomposition of the Green function and an effective force that is suitable for the problem at hand. For the time being, let us continue the informal discussion at a general level. Suppose that $\varPhi_\kappa$ is a solution of the original equation $\varPhi_\kappa=G\ast F_\kappa[\varPhi_\kappa]$. For $\mu\in[0,1]$ we define
\begin{equation}\label{eq:coarse_grained}
 \varPhi_{\kappa,\mu} = G_\mu\ast F_\kappa[\varPhi_\kappa].
\end{equation}
We call $(\varPhi_{\kappa,\mu})_{\mu\in(0,1]}$ the coarse-grained process. By our assumptions about $G_\Cdot$ we have $\varPhi_{\kappa,0}=\varPhi_\kappa$ and $\varPhi_{\kappa,1}=0$. For $\mu\in[0,1]$ the so-called remainder $\zeta_{\kappa,\mu}$ is defined by the equation
\begin{equation}\label{eq:def_zeta}
 F_\kappa[\varPhi_\kappa] = F_{\kappa,\mu}[\varPhi_{\kappa,\mu}] + \zeta_{\kappa,\mu}.
\end{equation}
Because $F_{\kappa,0}=F_\kappa$ and $\varPhi_{\kappa,0}=\varPhi_\kappa$ we obtain $\zeta_{\kappa,0}=0$. Since the LHS of~\eqref{eq:def_zeta} does not depend on $\mu$ it holds
\begin{equation}\label{eq:intro_partial_Phi}
 \partial_\mu\varPhi_{\kappa,\mu} = \dot G_\mu\ast (F_{\kappa,\mu}[\varPhi_{\kappa,\mu}] + \zeta_{\kappa,\mu})
\end{equation}
and
\begin{equation}\label{eq:intro_partial_zeta}
 \partial_\mu \zeta_{\kappa,\mu} = 
 -(\partial_\mu F_{\kappa,\mu})[\varPhi_{\kappa,\mu}] 
 -\rD F_{\kappa,\mu}[\varPhi_{\kappa,\mu}] \cdot \partial_\mu \varPhi_{\kappa,\mu}.
\end{equation}
Plugging~\eqref{eq:intro_partial_Phi} into~\eqref{eq:intro_partial_zeta} we obtain
\begin{equation}\label{eq:intro_partial_zeta3}
 \partial_\mu \zeta_{\kappa,\mu} =
 -(\partial_\mu F_{\kappa,\mu})[\varPhi_{\kappa,\mu}] 
 -\rD F_{\kappa,\mu}[\varPhi_{\kappa,\mu}] \cdot 
 (\dot G_\mu\ast (F_{\kappa,\mu}[\varPhi_{\kappa,\mu}] + \zeta_{\kappa,\mu})).
\end{equation}
Using the notation
\begin{equation}\label{eq:def_H}
 H_{\kappa,\mu}[\varphi]:=
 \partial_\mu F_{\kappa,\mu}[\varphi] 
 +
 \rD F_{\kappa,\mu}[\varphi]\cdot (\dot G_\mu\ast F_{\kappa,\mu}[\varphi])
\end{equation}
we rewrite~\eqref{eq:intro_partial_zeta3} in the following way
\begin{equation}\label{eq:intro_partial_zeta2}
 \partial_\mu \zeta_{\kappa,\mu} =
 -H_{\kappa,\mu}[\varPhi_{\kappa,\mu}] 
 -\rD F_{\kappa,\mu}[\varPhi_{\kappa,\mu}] \cdot 
 (\dot G_\mu\ast \zeta_{\kappa,\mu}).
\end{equation}
Summing up,~\eqref{eq:intro_partial_Phi} and~\eqref{eq:intro_partial_zeta2} together with the boundary conditions $\varPhi_{\kappa,1}=0$ and $\zeta_{\kappa,0}=0$ imply the following system of equations
\begin{equation}\label{eq:effective}
\begin{cases}\displaystyle
\varPhi_{\kappa,\mu} = -\int_\mu^1 \dot G_\eta \ast (F_{\kappa,\eta}[\varPhi_{\kappa,\eta}]+\zeta_{\kappa,\eta})\,\rd \eta
\\
\displaystyle
\zeta_{\kappa,\mu} = -\int_0^\mu (H_{\kappa,\eta}[\varPhi_{\kappa,\eta}] + \rD F_{\kappa,\eta}[\varPhi_{\kappa,\eta}] \cdot (\dot G_\eta\ast\zeta_{\kappa,\eta}))\,\rd \eta\,.
\end{cases}
\end{equation}
The system of equations for
\((\varPhi_{\kappa,\Cdot},\zeta_{\kappa,\Cdot})\) is called the
\emph{effective equation}. Formally, it is equivalent to the original mild
equation
\begin{equation}\label{eq:original_eq}
\varPhi_\kappa=G\ast F_\kappa[\varPhi_\kappa].
\end{equation}
Indeed, starting from a solution \(\varPhi_\kappa\) of the mild equation, one
constructs a pair \((\varPhi_{\kappa,\Cdot},\zeta_{\kappa,\Cdot})\) solving the
effective equation, with \(H_{\kappa,\Cdot}\) defined by~\eqref{eq:def_H}, by
the procedure described above.

Conversely, suppose that
\((\varPhi_{\kappa,\Cdot},\zeta_{\kappa,\Cdot})\) solves the effective equation
with \(H_{\kappa,\Cdot}\) given by~\eqref{eq:def_H}. Then one verifies that
\eqref{eq:def_zeta} holds, and hence that
\(\varPhi_\kappa:=\varPhi_{\kappa,0}\) solves~\eqref{eq:original_eq}.
The advantage of the effective equation is that, for a suitable choice of the
effective force \(F_{\kappa,\Cdot}\), it continues to make sense in the limit
\(\kappa\searrow0\). This is the key reason for introducing the
scale-dependent formulation.

\begin{rem}\label{rem:scale}
	The framework described above is useful only if the scale decomposition of
	\(G\) is chosen compatibly with the scaling of the equation. In practice, one
	chooses \(G_\mu\) so that, for every \(\mu\in(0,1]\), the kernel \(G_\mu\) is
	smooth, or at least has sufficiently many derivatives, and varies only on
	spatial scales of order
	\[
	[\mu]:=\mu^{1/\sigma}.
	\]
	Equivalently, \(G_\mu\) should contain only the part of \(G\) acting at spatial scales larger than
	\([\mu]\).
	With such a choice, the coarse-grained field \(\varPhi_{\kappa,\mu}\) should be
	thought of as the part of the solution \(\varPhi_\kappa\) visible at spatial
	scales larger than \([\mu]\). Informally, \(\varPhi_{\kappa,\mu}\) is obtained
	from \(\varPhi_\kappa\) by averaging over blocks of size \([\mu]\). It is
	therefore smooth for every fixed \(\mu>0\), while the singular behavior of
	\(\varPhi_\kappa\) is recovered only in the limit \(\mu\searrow0\).
	This interpretation suggests the heuristic equivalence
	\begin{equation}\label{eq:Phi_Besov_simeq}
		\|\varPhi_\kappa\|_{\sC^\alpha(\bM)}
		:=
		\sup_{\mu\in(0,1]}[\mu]^{-\alpha}\,
		\|K_\mu\ast\varPhi_\kappa\|
		\simeq
		\sup_{\mu\in(0,1]}[\mu]^{-\alpha}\,
		\|\varPhi_{\kappa,\mu}\|,
	\end{equation}
	at least for \(\alpha<\sigma-d/2\). Thus, uniform control of the
	coarse-grained fields \(\varPhi_{\kappa,\mu}\) in the right-hand side of
	\eqref{eq:Phi_Besov_simeq} should be viewed as a way of obtaining uniform
	Besov-type bounds on the original fields \(\varPhi_\kappa\). We refer the
	reader to~\cite{BCD11} for the standard definition and basic properties of
	Besov spaces.
\end{rem}

\begin{rem}
A possible choice for an effective force $F_{\kappa,\Cdot}$, made in~\cite{Du21,Du22} as well as in~\cite{kupiainen2016rg,kupiainen2017kpz}, is to define it in such a way that $H_{\kappa,\Cdot}$ given by~\eqref{eq:def_H} vanishes identically. An effective force satisfies then the so-called flow equation
\begin{equation}\label{eq:flow_eq}
 \partial_\mu F_{\kappa,\mu}[\varphi] 
 +
 \rD F_{\kappa,\mu}[\varphi]\cdot (\dot G_\mu\ast F_{\kappa,\mu}[\varphi])
 =
 0.
\end{equation}
In situations in which a small parameter is available solving the above equation is usually unproblematic. However, the solution is not a functional of polynomial type. Since the equation for the remainder $\zeta_{\kappa,\Cdot}$ in the system~\eqref{eq:effective} is linear and $H_{\kappa,\Cdot}=0$ the unique solution is given by $\zeta_{\kappa,\Cdot}=0$. Consequently, $\varPhi_{\kappa,\Cdot}$ satisfies the following effective equation
\begin{equation}
 \varPhi_{\kappa,\mu} = -\int_\mu^1 \dot G_\eta \ast F_{\kappa,\eta}[\varPhi_{\kappa,\eta}]\,\rd \eta.
\end{equation}
The advantage of the formulation involving $(\varPhi_{\kappa,\Cdot},\zeta_{\kappa,\Cdot})$, proposed in~\cite{GR23}, is that there is more flexibility in the choice of an effective force $F_{\kappa,\Cdot}$ as it has to satisfy the flow equation only up to some error term $H_{\kappa,\Cdot}$. In particular, a~suitable effective force can usually be constructed without exploiting the presence of a small parameter, even though a small parameter is typically needed anyway to solve~\eqref{eq:effective}.
\end{rem}

In order to prove well-posedness of the system of equations~\eqref{eq:effective} an effective force $F_{\kappa,\Cdot}$ has to satisfy some additional conditions, which we shall formulate below. To this end, let us first introduce some regularizing kernels and fix a convenient scale decomposition of the Green function.

\begin{dfn}\label{def:K_space}
Let $\cK\subset\sS'(\bM)$ be the space of signed measures on $\bM$ with finite total variation. We set $\|K\|_{\cK} = \int_{\bM} |K(\rd x)|$. For $x\in\bM$ we denote by $\delta_x\in\cK$ the Dirac delta at~$x$.
\end{dfn}

\begin{rem} 
It holds $\|\delta_x\|_\cK=1$ and $\|K\|_{\cK}=\|K\|_{L^1(\bM)}$ for all $K\in L^1(\bM)\subset\cK$.
\end{rem}

\begin{dfn}\label{def:K}
Let $\mu\in[0,1]$ and $[\mu]=\mu^{1/\sigma}$. The kernel $\tilde K_\mu\in\cK$ is the unique solution of $\tilde\fP_\mu \tilde K_\mu=\delta_0$, where $\tilde\fP_\mu:=(1-[\mu]^{2}\Delta)^{d+2}$. We define $K_\mu:=\tilde K_\mu\ast\tilde K_\mu\ast\tilde K_\mu\in\cK$ and $\fP_\mu:=\tilde\fP_\mu^3$.
\end{dfn}

The kernels \(K_\mu\) and \(\tilde K_\mu\) have exponential decay on the
characteristic length scale \([\mu]\). The kernels \(K_\mu\) and \(\tilde K_\mu\) are not \(C^\infty\) at the origin,
but they have the weak derivatives needed in the estimates below, and
convolution with them has a smoothing effect provided $\mu>0$. In what follows, we will often use the identity
\(\fP_\mu K_\mu=\delta_0\) to insert such a regularization where needed. For
instance, for suitable distributions \(\psi\) and \(\phi\), we may write
\[
\psi\ast\phi
=
(\fP_\mu\psi)\ast(K_\mu\ast\phi).
\]
We shall repeatedly use the basic estimates on these regularizing kernels
collected in the following exercise.

\begin{ex}\label{ex:K_mu}
Prove the following statements:
\begin{itemize}
\item[(1)] $\tilde K_0=\delta_0$ and $\tilde K_\mu\in L^1(\bM)\cap C^d_\rb(\bM)$ for $\mu\in(0,1]$. 

\item[(2)] For all $\mu\in[0,1]$ the kernel $\tilde K_\mu$ is a positive measure and $\|\tilde K_\mu\|_\cK=1$. 

\item[(3)] For all $0\leq\eta\leq\mu\leq1$ there exist $\tilde K_{\mu,\eta}\in\cK$ such that $\|\tilde K_{\mu,\eta}\|_\cK=1$ and $\tilde K_\mu=\tilde K_{\mu,\eta}\ast\tilde K_\eta$. 
\end{itemize}
Conclude that the kernels $K_\mu$, $\mu\in[0,1]$, also have the above properties. 
Hint for Item~(3): Let $\hat K_\mu\in\cK$ be the solution of \mbox{$\hat\fP_\mu \hat K_\mu=\delta_0$}, where $\hat\fP_\mu:=(1-[\mu]^2\Delta)$. It holds $\hat K_\mu = \hat K_{\mu,\eta}\ast \hat K_\eta$ for $\hat K_{\mu,\eta}=[\eta/\mu]^2\,\delta_0 + (1-[\eta/\mu]^2) \,\hat K_\mu\in\cK$.
\end{ex}

\begin{rem}\label{rem:G}
Recall that $G\in L^1(\bM)$ is the fundamental solution for the pseudo-differential operator $(1-\Delta)^{\sigma/2}$. Note that $G$ is smooth outside the origin. For every multi-index $a\in\bN_0^d$ it holds $|\partial^a G(x)|\lesssim |x|^{\sigma-d-|a|}$ uniformly for $x\in\bR^d\setminus\{0\}$. Furthermore, $\partial^a G$ is of fast decay at infinity for every $a\in\bN_0^d$.
\end{rem}

\begin{dfn}[Scale decomposition of $G$]\label{dfn:kernel_G}
Fix $\chi\in C^\infty(\bR)$ such that $\chi(r)=0$ for $|r|\leq1/4$ and $\chi(r)=1$ for $|r|\geq1/2$ and let $\chi_\mu(r) := \chi(r(1-\mu)/\mu)$ for $\mu\in(0,1]$. For $\mu\in(0,1]$ the smooth kernels $G_{\mu},\dot G_{\mu}\in C^\infty(\bM)$ are defined by
\begin{equation}
 G_\mu(x) := 
 \chi_\mu(|x|^\sigma)\,
 G(x),
 \qquad
 \dot G_\mu:=\partial_\mu G_\mu.
\end{equation}
\end{dfn}

\begin{rem}\label{rem:support_G}
	We choose to work with a scale decomposition of \(G\) in physical space rather than in Fourier space. The main advantage of this choice is the localization property of the kernels \(\dot G_\mu\). Indeed, \(\dot G_\mu\) is supported in the shell
	\[
	\bigl\{
	x\in\bM
	\,\big|\,
	\mu/4 < (1-\mu)|x|^\sigma < \mu/2
	\bigr\},
	\]
	and in particular
	\[
	\supp\,\dot G_\mu
	\subset
	\{x\in\bM \mid |x|\leq [\mu]\},
	\qquad
	\mu\in(0,1/2].
	\]
	This support property plays a crucial role in the subsequent analysis, since it allows us to avoid the use of weighted spaces altogether. The trade-off is that the last identity in~\eqref{eq:Phi_Besov_simeq} is no longer immediate. Establishing convergence in the corresponding Besov norm requires an additional argument, which is provided in Lemma~\ref{lem:fixed_convergence}. By contrast, for a Fourier-space decomposition of \(G\), such convergence properties would be essentially automatic. However, that approach would require working with weighted spaces, making several later estimates considerably more cumbersome.
\end{rem}

\begin{lem}\label{lem:kernel_G}
For all $l\in\bN_0$ it holds $\|\tilde\fP^{l}_\mu \dot G_\mu\|_\cK \lesssim 1$ uniformly in $\mu\in(0,1]$.
\end{lem}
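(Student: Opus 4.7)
The plan is to bound $\tilde\fP_\sIR^l \dot G_\sIR$ pointwise by $[\sIR]^{-d}$ on its support and then integrate, using that the support has Lebesgue measure $\lesssim [\sIR]^d$. First consider $\sIR \in (0,1/2]$. A direct computation gives
\begin{equation}
 \dot G_\sIR(x) = -\chi'\!\bigl(|x|^\sigma(1-\sIR)/\sIR\bigr)\,\frac{|x|^\sigma}{\sIR^2}\,G(x),
\end{equation}
so $\supp \dot G_\sIR$ is contained in the shell where $|x|^\sigma(1-\sIR)/\sIR \in [1/4,1/2]$, which forces $|x| \sim [\sIR]$; in particular $\supp \dot G_\sIR \subset \{|x| \leq [\sIR]\}$ as noted in Remark~\ref{rem:support_G}, and this shell has volume $\lesssim [\sIR]^d$.

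Next I would establish the pointwise bound $|\partial^a \dot G_\sIR(x)| \lesssim [\sIR]^{-d-|a|}$ on the support for every multi-index $a \in \bN_0^d$. Two ingredients enter: (i) $|\partial^a G(x)| \lesssim |x|^{\sigma-d-|a|} \lesssim [\sIR]^{\sigma-d-|a|}$ from Remark~\ref{rem:G}; and (ii) the scaling $|\partial^a[|x|^\sigma(1-\sIR)/\sIR]| \lesssim [\sIR]^{-|a|}$ on the shell (for $|a| \geq 1$), which follows from $(1-\sIR)/\sIR \leq 1/\sIR = [\sIR]^{-\sigma}$ on $\sIR \in (0,1/2]$ together with $|\partial^a |x|^\sigma| \lesssim |x|^{\sigma - |a|}$ away from the origin. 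Combining these via the iterated chain rule applied to $\chi'(\cdot)$ and the Leibniz rule yields $|\partial^a(|x|^\sigma \chi'(\cdot)/\sIR^2)| \lesssim [\sIR]^{-\sigma - |a|}$; a final Leibniz expansion against $G$ produces the claimed bound, which expresses the intuitive fact that each spatial derivative at scales $|x|\sim[\sIR]$ costs a factor $[\sIR]^{-1}$.

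Expanding $\tilde\fP_\sIR^l = \sum_{k=0}^{l(d+2)}\binom{l(d+2)}{k}(-[\sIR]^2)^k\Delta^k$ then gives
\begin{equation}
 |\tilde\fP_\sIR^l \dot G_\sIR(x)| \lesssim \sum_{k=0}^{l(d+2)} [\sIR]^{2k}\cdot[\sIR]^{-d-2k} \lesssim [\sIR]^{-d}
\end{equation}
on the support. Since $\tilde\fP_\sIR^l$ is a differential operator, its action does not enlarge support, so integrating against the volume bound $\lesssim [\sIR]^d$ yields $\|\tilde\fP_\sIR^l \dot G_\sIR\|_\cK = \|\tilde\fP_\sIR^l \dot G_\sIR\|_{L^1(\bM)} \lesssim 1$. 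The regime $\sIR \in [1/2,1]$ is handled separately: $[\sIR]$ is bounded above and below, the prefactors in the explicit formula for $\dot G_\sIR$ are uniformly controlled, and the fast decay of $G$ and its derivatives at infinity (Remark~\ref{rem:G}) keeps the $L^1$ norm finite uniformly even though the shell may now reach out to large $|x|$. The only real bookkeeping obstacle is the inductive derivative estimate in step (ii); everything else is automatic once the pointwise bound is in hand.
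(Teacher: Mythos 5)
Your proof is correct and takes essentially the same approach as the paper: derive a pointwise bound for $\partial^a \dot G_\sIR$ on the supporting shell (the paper's version is $|\partial^a \partial_\sIR\chi_\sIR(|x|^\sigma)|\lesssim |x|^{\sigma-|a|}/\sIR^2$ combined with $|\partial^a G(x)|\lesssim|x|^{\sigma-d-|a|}$, which on the shell is exactly your $[\sIR]^{-d-|a|}$), then integrate using the shell's volume $\sim[\sIR]^d$ (for small $\sIR$) or the fast decay of $G$ (when $\sIR$ is near $1$ and the shell has moved out to large $|x|$). The only cosmetic difference is that you split explicitly into $\sIR\in(0,1/2]$ and $\sIR\in[1/2,1]$, whereas the paper states the pointwise bound uniformly in $\sIR\in(0,1]$ and lets the decay of $G$ absorb the large-$|x|$ regime implicitly.
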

\begin{proof}[Proof $\mathrm{(}\spadesuit\mathrm{)}$.]
First note that $\partial_\mu\chi_\mu(|x|^\sigma)$ vanishes unless $\mu/4<(1-\mu)|x|^\sigma\leq \mu/2$. Moreover, for all $a\in\bN_0^d$ we have
\begin{equation}
 |\partial^a \partial_\mu\chi_\mu(|x|^\sigma)|\lesssim |x|^{\sigma-|a|}/\mu^2
\end{equation}
uniformly in $\mu\in(0,1]$ and $x\in\bM$. Using the properties of the kernel $G$ mentioned in Remark~\ref{rem:G} we obtain $\|\partial^a \dot G_{\mu}\|_\cK \lesssim [\mu]^{-|a|}$ for all $a\in\bN_0^d$. This implies the lemma since $\tilde\fP_\mu = (1-[\mu]^2\Delta)^{d+2}$.
\end{proof}

In order to make sense of the system of equations~\eqref{eq:effective} in the limit \(\kappa \searrow 0\), we rewrite it in terms of the regularized functionals
\begin{equation}\label{eq:tilde_F_H}
	\tilde F_{\kappa,\eta}[\varphi]
	:=
	K_\eta \ast F_{\kappa,\eta}[K_\eta \ast \varphi],
	\qquad
	\tilde H_{\kappa,\eta}[\varphi]
	:=
	K_\eta \ast H_{\kappa,\eta}[K_\eta \ast \varphi].
\end{equation}
Thus, the original functionals are regularized at scale \(\mu\) both in their argument and in their output through convolution with the kernel \(K_\mu\). For \(\kappa > 0\), the system~\eqref{eq:effective} is equivalent to
\begin{equation}\label{eq:system2}
	\begin{cases}
		\displaystyle
		\tilde\varPhi_{\kappa,\mu}
		=
		-
		\int_\mu^1
		K_{\eta,\mu}\ast \tilde G_\eta \ast
		\big(
		\tilde F_{\kappa,\eta}[\tilde\varPhi_{\kappa,\eta}]
		+
		\tilde\zeta_{\kappa,\eta}
		\big)
		\,\rd \eta,
		\\[1.2ex]
		\displaystyle
		\tilde\zeta_{\kappa,\mu}
		=
		-
		\int_0^\mu
		K_{\mu,\eta}\ast
		\Big(
		\tilde H_{\kappa,\eta}[\tilde\varPhi_{\kappa,\eta}]
		+
		\rD \tilde F_{\kappa,\eta}[\tilde\varPhi_{\kappa,\eta}]
		\cdot
		(\tilde G_\eta\ast \tilde\zeta_{\kappa,\eta})
		\Big)
		\,\rd \eta,
	\end{cases}
\end{equation}
where
\begin{equation}\label{eq:tilde_G}
	\tilde G_\eta := \fP_\eta^2 \dot G_\eta,
\end{equation}
and the kernels \((K_{\mu,\eta})_{0\le \eta \le \mu \le 1}\) are defined through the relation
\begin{equation}\label{eq:inter_ref_kernel}
	K_\mu=K_{\mu,\eta}\ast K_\eta
\end{equation}
and satisfy the properties established in Exercise~\ref{ex:K_mu}. The solutions \((\tilde\varPhi_{\kappa,\Cdot},\tilde\zeta_{\kappa,\Cdot})\) are related to the solutions \((\varPhi_{\kappa,\Cdot},\zeta_{\kappa,\Cdot})\) of~\eqref{eq:effective} by
\begin{equation}\label{eq:tilde_Phi_zeta}
	\tilde\varPhi_{\kappa,\mu}
	=
	\fP_\mu \varPhi_{\kappa,\mu},
	\qquad
	\tilde\zeta_{\kappa,\mu}
	=
	K_\mu \ast \zeta_{\kappa,\mu}.
\end{equation}

\begin{rem}
	The equivalence between the systems~\eqref{eq:effective} and~\eqref{eq:system2} follows directly from the properties of the regularizing kernels \(K_\mu\). In particular, using the identity \(\fP_\mu K_\mu = \delta_0\) and integrating by parts in the scale parameter, we obtain
	\begin{equation}
		\begin{cases}
			\displaystyle
			\tilde\varPhi_{\kappa,\mu} = - \int_\mu^1 \fP_\mu \fP_\eta \dot G_\eta \ast K_\eta \ast \big(F_{\kappa,\eta}[\varPhi_{\kappa,\eta}] + \zeta_{\kappa,\eta}\big) \,\rd \eta,
			\\[3ex]
			\begin{aligned}
				\tilde\zeta_{\kappa,\mu} = - \int_0^\mu K_{\mu,\eta} \ast K_\eta \ast \Big( & H_{\kappa,\eta}[\varPhi_{\kappa,\eta}] \\
				& + \rD F_{\kappa,\eta}[\varPhi_{\kappa,\eta}] \cdot \big(K_\eta \ast \fP_\eta^2 \dot G_\eta \ast K_\eta \ast \zeta_{\kappa,\eta}\big) \Big) \,\rd \eta.
			\end{aligned}
		\end{cases}
	\end{equation}
	We then use
	\begin{equation}
		\varPhi_{\kappa,\mu}
		=
		K_\mu \ast \fP_\mu \varPhi_{\kappa,\mu}
		=
		K_\mu \ast \tilde\varPhi_{\kappa,\mu},
		\qquad
		\fP_\eta^2\dot G_\eta=\tilde G_\eta
	\end{equation}
	together with
	\[
	\fP_\mu\fP_\eta \dot G_\eta
	=
	K_{\eta,\mu}\ast \tilde G_\eta,
	\qquad
	\eta \ge \mu.
	\]
\end{rem}

\begin{rem}\label{rem:lift_bounds}
Note that $\sup_{\mu\in(0,1]}\|\tilde G_\mu\|_\cK=:C_G<\infty$ by Lemma~\ref{lem:kernel_G} and $\|K_{\mu,\eta}\|_\cK=1$ for all $0\leq\eta\leq\mu\leq1$ by Exercise~\ref{ex:K_mu}.  
\end{rem}

Thanks to the presence of the regularizing kernels in the definition of $\tilde F_{\kappa,\mu}$ and $\tilde H_{\kappa,\mu}$ we will be able to control the limit of these functionals as $\kappa\searrow0$. For the choice of an effective force, which will be specified in the next section, $F_{\kappa,\mu}$ is in some sense a small perturbation of the noise $\xi_\kappa$. Hence, $\tilde F_{\kappa,\mu}$ is in some sense a small perturbation $K_\mu\ast\xi_\kappa$. Since the bound $\|K_\mu\ast\xi_\kappa\|\lesssim [\mu]^{\alpha-\sigma}$ uniform in $\kappa,\mu\in(0,1]$ is satisfied almost surely for all $\alpha<\sigma-d/2\leq0$ we expect a bound of the form $\|\tilde F_{\kappa,\mu}[\varphi]\|\lesssim [\mu]^{\alpha-\sigma}$ uniform in $\kappa,\mu\in(0,1]$ for arbitrary fixed $\varphi\in C(\bT)$ and all $\alpha<\sigma-d/2$. Then by the first of the equations~\eqref{eq:system2} we can hope that
\begin{equation}
 \|\tilde\varPhi_{\kappa,\mu}\|\lesssim \int_\mu^1 [\eta]^{\alpha-\sigma}\,\rd\eta\lesssim [\mu]^{\alpha}
\end{equation}
uniformly in $\kappa,\mu\in(0,1]$, which, as we argued in Remark~\ref{rem:scale}, is consistent with the fact that $\|\varPhi_\kappa\|_{\sC^\alpha(\bM)}$ should be uniformly bounded in $\kappa\in(0,1]$. In order to make sense of the second of the equations~\eqref{eq:system2} we need a bound of the form $\|\tilde H_{\kappa,\mu}[\varphi]\|\lesssim [\mu]^{\beta-\sigma}$ uniform in $\kappa,\mu\in(0,1]$ for arbitrary fixed $\varphi\in C(\bT)$ and some $\beta>0$. Then
\begin{equation}
 \|\tilde\zeta_{\kappa,\mu}\|\lesssim \int_0^\mu [\eta]^{\beta-\sigma}\,\rd\eta \lesssim [\mu]^\beta.
\end{equation}
We stress that it is crucial that $\beta>0$ for the above bound to be valid.

\begin{dfn}
For $\alpha\in(-\infty,0)$, $\beta\in(0,\infty)$ and $R\in[1,\infty)$ we define $\sB_R$ to be the set of continuous maps
\begin{equation}
 (0,1]\ni\mu\mapsto (\tilde\varPhi_{\mu},\tilde\zeta_{\mu})\in C(\bT)\times C(\bT)
\end{equation}
such that 
\begin{equation}
 \|(\tilde\varPhi_\Cdot,\tilde\zeta_\Cdot)\|_{\sB_R}
 :=
 \sup_{\mu\in(0,1]}\,[\mu]^{-\alpha}\,\|\tilde\varPhi_{\mu}\|
 +
 R \sup_{\mu\in(0,1]}\,[\mu]^{-\beta}\,\|\tilde\zeta_{\mu}\|\leq R^2.
\end{equation} 
\end{dfn}

\begin{lem}\label{lem:lift}
Fix $\alpha\in(-\infty,0)$, $\beta\in(0,\infty)$ and $R\in[1,\infty)$ such that 
\begin{equation}
 R\,(|\alpha|\wedge\beta)>100\,\sigma\,(C_G\vee1),
 \qquad
 \qquad
 C_G:=\sup_{\mu\in(0,1]}\|\tilde G_\mu\|_\cK<\infty.
\end{equation}
Suppose that $$(\tilde F_\mu)_{\mu\in(0,1]},\qquad\qquad (\tilde H_\mu)_{\mu\in(0,1]}$$ are families of functionals of polynomial type depending continuously on $\mu\in(0,1]$ such that for some $m_\flat\in\bN_0$ it holds
\begin{equation}\label{eq:bound_F1}
\begin{aligned}
 [\mu]^{\sigma-\alpha}\,&\|\rD^k \tilde F_{\mu}[\varphi]\cdot \psi^{\otimes k}\,\|
 \\&\leq
 R\,(|\lambda|^{1/3}[\mu]^{-\alpha}\,\|\psi\|)^k\,(1/2+|\lambda|^{1/3}[\mu]^{-\alpha}\,\|\varphi\|)^{m_\flat},
\end{aligned}
\end{equation}
\begin{equation}\label{eq:bound_H1}
\begin{aligned}
 [\mu]^{\sigma-\beta}\,&\|\rD^k \tilde H_{\mu}[\varphi]\cdot\psi^{\otimes k}\,\|
 \\&\leq |\lambda|^{1/3}\,R^2\, (|\lambda|^{1/3}[\mu]^{-\alpha}\,\|\psi\|)^k\,(1/2+|\lambda|^{1/3}[\mu]^{-\alpha}\,\|\varphi\|)^{m_\flat},
\end{aligned}
\end{equation}
for all $k\in\{0,1,2\}$, $\mu\in(0,1]$, $\varphi,\psi\in C(\bT)$ and $\lambda\in[-1,1]$. Let $\lambda_\star:=1/(2R^2)^3$ and suppose that $\lambda\in[-\lambda_\star,\lambda_\star]$. Under the above assumptions the map $\fQ\,:\,\sB_R\to \sB_R$,
\begin{equation}\label{eq:map_Q}
\fQ\big[\tilde\varPhi_\Cdot,\tilde\zeta_\Cdot\big]
:=
\begin{pmatrix}
\mu\mapsto-\int_\mu^1 K_{\eta,\mu}\ast \tilde G_\eta \ast (\tilde F_{\eta}[\tilde\varPhi_{\eta}]+\tilde\zeta_{\eta})\,\rd \eta
\\
\mu\mapsto-\int_0^\mu K_{\mu,\eta}\ast (\tilde H_{\eta}[\tilde\varPhi_{\eta}] + \rD \tilde F_{\eta}[\tilde\varPhi_{\eta}] \cdot (\tilde G_\eta\ast \tilde\zeta_{\eta}))\,\rd \eta
\end{pmatrix}\,,
\end{equation}
is well defined and is a contraction with the Lipschitz constant less than~$1/2$.
\end{lem}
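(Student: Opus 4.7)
The statement is a contraction-mapping argument in the complete metric space $\sB_R$; the key is to translate the hypotheses on $\tilde F,\tilde H$ and the choice of $\lambda_\star$ into smallness parameters that close both the self-mapping and contraction estimates. Throughout I would keep three ingredients at hand: the kernel bounds $\|K_{\uIR,\sIR}\|_\cK=1$ and $\sup_{\uIR}\|\tilde G_\uIR\|_\cK\leq C_G$ from Exercise~\ref{ex:K_mu} and Remark~\ref{rem:lift_bounds}; the scaling integrals
\begin{equation}
\int_\sIR^1[\uIR]^{\alpha-\sigma}\,\rd\uIR\leq\tfrac{\sigma}{|\alpha|}\,[\sIR]^\alpha,\qquad\int_0^\sIR[\uIR]^{\beta-\sigma}\,\rd\uIR=\tfrac{\sigma}{\beta}\,[\sIR]^\beta,\qquad\int_\sIR^1[\uIR]^\beta\,\rd\uIR\leq\tfrac{\sigma}{\beta};
\end{equation}
and the two smallness inequalities $|\lambda|^{1/3}R^2\leq 1/2$ (from $\lambda_\star=1/(2R^2)^3$) and $\sigma(C_G\vee 1)/((|\alpha|\wedge\beta)R)\leq 1/100$ (from the standing hypothesis on $R$).

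\textbf{Self-mapping.} For $(\tilde\varPhi_\Cdot,\tilde\zeta_\Cdot)\in\sB_R$, the bound $\|\tilde\varPhi_\uIR\|\leq R^2[\uIR]^\alpha$ together with $|\lambda|^{1/3}R^2\leq 1/2$ forces the factor $(1/2+|\lambda|^{1/3}[\uIR]^{-\alpha}\|\tilde\varPhi_\uIR\|)^{m_\flat}$ in~\eqref{eq:bound_F1}--\eqref{eq:bound_H1} to be $\leq 1$, so~\eqref{eq:bound_F1} at $k=0$ gives $\|\tilde F_\uIR[\tilde\varPhi_\uIR]\|\leq R[\uIR]^{\alpha-\sigma}$, and with $\|\tilde\zeta_\uIR\|\leq R[\uIR]^\beta$ the scaling integrals yield
\begin{equation}
[\sIR]^{-\alpha}\bigl\|(\fQ[\tilde\varPhi_\Cdot,\tilde\zeta_\Cdot])_1(\sIR)\bigr\|\leq C_G R\sigma\bigl(\tfrac{1}{|\alpha|}+\tfrac{1}{\beta}\bigr)\leq\tfrac{R^2}{50}.
\end{equation}
For the second component I would use~\eqref{eq:bound_H1} at $k=0$ together with~\eqref{eq:bound_F1} at $k=1$ paired with $\|\tilde G_\uIR\ast\tilde\zeta_\uIR\|\leq C_G R[\uIR]^\beta$; both pieces pick up a factor $|\lambda|^{1/3}R^2\leq 1/2$, so after the $[\uIR]^{\beta-\sigma}$ integral and the extra $R$ in the definition of $\|\cdot\|_{\sB_R}$ the second component contributes at most $R^2/100$. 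Summing gives $\|\fQ[\tilde\varPhi_\Cdot,\tilde\zeta_\Cdot]\|_{\sB_R}\leq R^2$, and continuity in $\sIR$ of each component follows from dominated convergence.

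\textbf{Contraction.} For two inputs $(\tilde\varPhi^{(i)}_\Cdot,\tilde\zeta^{(i)}_\Cdot)\in\sB_R$, $i=1,2$, I would expand differences via the fundamental theorem of calculus, e.g.
\begin{equation}
\tilde F_\uIR[\tilde\varPhi^{(1)}_\uIR]-\tilde F_\uIR[\tilde\varPhi^{(2)}_\uIR]=\int_0^1\rD\tilde F_\uIR[\tilde\varPhi^{(2)}_\uIR+t\Delta\tilde\varPhi_\uIR]\cdot\Delta\tilde\varPhi_\uIR\,\rd t,
\end{equation}
and similarly for $\tilde H$-differences and for $\rD\tilde F$-differences (now using~\eqref{eq:bound_F1} at $k=2$). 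Convexity places the line segment inside $\{\|\varphi\|\leq R^2[\uIR]^\alpha\}$, so the $(1/2+\ldots)^{m_\flat}$ factor stays $\leq 1$ along the integration. Repeating the Step 1 computations with the natural norm $\|\cdot\|_{\sB_R}$ on $(\Delta\tilde\varPhi_\Cdot,\Delta\tilde\zeta_\Cdot)$, each contribution to $\|\fQ[\tilde\varPhi^{(1)}]-\fQ[\tilde\varPhi^{(2)}]\|_{\sB_R}$ is controlled by either $|\lambda|^{1/3}R^2\leq 1/2$ (whenever a $\varphi$-derivative appears) or by $\sigma(C_G\vee1)/((|\alpha|\wedge\beta)R)\leq 1/100$ (for the pieces paired with $\Delta\tilde\zeta$); summing all of them leaves the Lipschitz constant safely below $1/2$.

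\textbf{Main obstacle.} The structure is routine; the real work is bookkeeping. The most delicate estimate is the second-component contraction, as it produces three different difference terms --- $\Delta\tilde H$ controlled via $\rD\tilde H$, $(\rD\tilde F[\tilde\varPhi^{(1)}]-\rD\tilde F[\tilde\varPhi^{(2)}])\cdot(\tilde G_\uIR\ast\tilde\zeta^{(2)}_\uIR)$ controlled via $\rD^2\tilde F$, and $\rD\tilde F[\tilde\varPhi^{(1)}]\cdot(\tilde G_\uIR\ast\Delta\tilde\zeta_\uIR)$ controlled via $\rD\tilde F$ --- each of which must cancel scales correctly against the $[\uIR]^{\beta-\sigma}$ integration measure. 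Once one verifies that the derivative-induced $|\lambda|^{1/3}$ factors combine with the $R$-bounds on $\tilde\varPhi^{(i)},\tilde\zeta^{(i)}$ into the universal prefactor $|\lambda|^{1/3}R^2\leq 1/2$, while the remaining $k=0$ pieces use the independent smallness $\sigma/(R(|\alpha|\wedge\beta))\leq 1/100$, the whole estimate closes with a large margin.
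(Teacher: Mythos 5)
Your proposal is correct and follows essentially the same route as the paper's own (sketched) proof: bound $\tilde F_{\uIR}[\tilde\varPhi_{\uIR}]$, $\tilde H_{\uIR}[\tilde\varPhi_{\uIR}]$ and $\rD\tilde F_{\uIR}[\tilde\varPhi_{\uIR}]\cdot(\tilde G_\uIR\ast\tilde\zeta_{\uIR})$ using membership in $\sB_R$ together with $|\lambda|^{1/3}R^2\leq 1/2$, integrate against the scaling weights $[\uIR]^{\alpha-\sigma}$, $[\uIR]^{\beta-\sigma}$, and close with the condition $R(|\alpha|\wedge\beta)>100\,\sigma(C_G\vee1)$, with the contraction obtained from the $k=1,2$ derivative bounds via the fundamental theorem of calculus (the paper leaves this last step as ``similar estimates,'' which you spell out correctly). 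One small caveat: in the contraction step the two smallness mechanisms generally enter as a product, e.g. $C_G\tfrac{\sigma}{|\alpha|}R|\lambda|^{1/3}\leq\tfrac{1}{100}R\cdot\tfrac{1}{2R}$, rather than ``either/or,'' but your bookkeeping as described does close with ample margin.
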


\begin{rem}
The bounds~\eqref{eq:bound_F1} and~\eqref{eq:bound_H1} stated in the above lemma say that the functionals $\tilde F_{\mu}$ and $\tilde H_{\mu}$ are compatible with the growth of the norm $\|\tilde\varPhi_\mu\|$ when $\mu$ tends to zero. The bounds also take into account the fact that there is one power of $\lambda^{1/3}$ for each factor of $\varphi$ in the expression~\eqref{eq:spde_F_mild} for the force $F_\kappa[\varphi]$.
\end{rem}

\begin{proof}[Proof sketch]
First note that for $(\tilde\varPhi_\Cdot,\tilde\zeta_\Cdot)\in\sB_R$ it holds
\begin{equation}
\begin{gathered}
 [\mu]^{\sigma-\alpha}\,\|\tilde F_{\mu}[\tilde\varPhi_{\mu}]\|
 \leq 
 R,
 \qquad
 [\mu]^{\sigma-\beta}\,\|\rD \tilde F_{\mu}[\tilde\varPhi_{\mu}]\cdot (\tilde G_\mu\ast\tilde\zeta_{\mu})\|
 \leq 
 C_G,
 \\
 [\mu]^{\sigma-\beta}\,\|\tilde H_{\mu}[\tilde\varPhi_{\mu}]\|
 \leq 1.
\end{gathered} 
\end{equation}
By Remark~\ref{rem:lift_bounds} we obtain
\begin{multline}
\|\fQ(\tilde\varPhi_\Cdot,\tilde\zeta_\Cdot)\|_{\sB_R}
 \leq
 C_G\,\sup_{\mu\in(0,1]}\,[\mu]^{-\alpha} \int_\mu^1 \|\tilde F_{\eta}[\tilde\varPhi_{\eta}]+\tilde\zeta_{\eta}\|\,\rd \eta
 \\
 +
 R\,\sup_{\mu\in(0,1]}\,[\mu]^{-\beta} \int_0^\mu \|\tilde H_{\eta}[\tilde\varPhi_{\eta}] + \rD \tilde F_{\eta}[\tilde\varPhi_{\eta}] \cdot (\tilde G_\eta\ast \tilde\zeta_{\eta})\|\,\rd \eta 
 \\
 \leq \sigma/|\alpha|~C_G\, R + \sigma/|\alpha|~C_G\,R + \sigma/\beta~R + \sigma/\beta~C_G\,R \leq R^2.
\end{multline}
By similar estimates one shows that $\fQ\,:\,\sB_R\to \sB_R$ is a contraction with the Lipschitz constant less than~$1/2$.
\end{proof}

\begin{rem}
	Note that the map \(\fQ\) is a contraction only when the coupling constant \(\lambda\) is sufficiently small. By contrast, for parabolic equations such as~\eqref{eq:intro_mild_parabolic}, the analogous map is contractive for arbitrary \(\lambda\in\bR\), provided the (random) time interval of existence is chosen sufficiently small, depending on both \(\lambda\) and the realization of the noise; see Theorem~\ref{thm:parabolic}.
\end{rem}

We close this section by explaining how the fixed point equation for the map
$\fQ$ is related to the original equation. The details are deferred to
Section~\ref{sec:app}, where we collect the auxiliary arguments needed to
pass from the scale-dependent fixed point to a solution of the original
equation and then to identify its limit.

Let us first describe the relation between the fixed point equation for
$\fQ$ and the original equation
\[
\varPhi_\kappa=G\ast F_\kappa[\varPhi_\kappa].
\]
In general, such a relation need not hold. It does hold, however, under the
following structural assumptions:
\begin{itemize}
	\item[(1)] $\tilde F_\Cdot\equiv \tilde F_{\kappa,\Cdot}$ and
	$\tilde H_\Cdot\equiv \tilde H_{\kappa,\Cdot}$ are defined by
	\eqref{eq:tilde_F_H} in terms of $F_{\kappa,\Cdot}$ and
	$H_{\kappa,\Cdot}$;
	
	\item[(2)] $F_{\kappa,\Cdot}$ is an effective force, in particular
	$F_{\kappa,0}=F_\kappa$;
	
	\item[(3)] $H_{\kappa,\Cdot}$ is defined by \eqref{eq:def_H} in terms of
	$F_{\kappa,\Cdot}$.
\end{itemize}
Let $\fQ_\kappa$ be the map defined by \eqref{eq:map_Q} in terms of
$\tilde F_{\kappa,\Cdot}$ and $\tilde H_{\kappa,\Cdot}$, and suppose that
$(\tilde\varPhi_{\kappa,\Cdot},\tilde\zeta_{\kappa,\Cdot})$ is a fixed point of
$\fQ_\kappa$. Under the additional boundedness assumptions verified in
Section~\ref{sec:app}, one can show that the limit
\[
\lim_{\mu\searrow0}
K_\mu\ast\tilde\varPhi_{\kappa,\mu}
=:\varPhi_\kappa
\]
exists in $C(\bT)$ and satisfies the original equation
\[
\varPhi_\kappa=G\ast F_\kappa[\varPhi_\kappa].
\]
Moreover, if the regularized functionals
$\tilde F_{\kappa,\Cdot}$ and $\tilde H_{\kappa,\Cdot}$ converge in the appropriate
scale-dependent sense as $\kappa\searrow0$, then the corresponding solutions
$\varPhi_\kappa$ converge in the Besov space $\sC^\alpha(\bM)$. This is the
content of Lemma~\ref{lem:fixed_convergence}.

Let us also explain why the above indirect route is necessary in the elliptic
setting. For parabolic equations, a natural strategy is to construct the
maximal classical solution of the mild formulation with regularized noise
$\xi_\kappa$ by patching together local solutions obtained by the contraction
principle on short time intervals. One can then verify that the restriction
of this maximal solution to a sufficiently short interval is a fixed point of
the analogue of the map $\fQ$ from Lemma~\ref{lem:lift}.

For the elliptic equation considered here, this direct strategy is not
available uniformly in the regularization parameter. Indeed, solving
\[
\varPhi_\kappa=G\ast F_\kappa[\varPhi_\kappa]
\]
directly by the contraction principle yields a classical solution only for
$\lambda\in[-\lambda_{\star,\kappa},\lambda_{\star,\kappa}]$, with
$\lambda_{\star,\kappa}\to0$ as $\kappa\searrow0$. We therefore proceed in the
opposite direction. We first construct, for some
$\lambda_\star>0$ independent of $\kappa\in(0,1]$, a fixed point of the
regularized scale-dependent map $\fQ_\kappa$ for all
$\lambda\in[-\lambda_\star,\lambda_\star]$. We then use the identification
results from Section~\ref{sec:app} to prove that, under the structural
assumptions above, this fixed point actually gives rise to a solution of the
original regularized equation. This avoids the degenerating direct
contraction argument and provides a formulation that remains stable as the
regularization is removed.

\section{Construction of effective force}\label{sec:effective_force}

In the previous section, we argued that, under certain assumptions, the equation 
\begin{equation}\label{eq:spde_F_mild2}
 \varPhi_\kappa=G\ast F_\kappa[\varPhi_\kappa],
 \qquad
 F_\kappa[\varphi]:= \xi_\kappa 
 +\lambda\, \varphi^3
 +\sum_{i=1}^{i_\sharp}\lambda^i\, c_\kappa^{(i)} \varphi,
\end{equation}
which we want to solve, can be formulated as a fixed point problem for the map $\fQ$ defined by~\eqref{eq:map_Q}. Recall that the map $\fQ$ involves a scale decomposition $(G_\mu)_{\mu\in[0,1]}$ of the Green function $G$ and two families of functionals $(\tilde F_\mu)_{\mu\in(0,1]}$ and $(\tilde H_\mu)_{\mu\in(0,1]}$. The scale decomposition of the Green function was fixed in Definition~\ref{dfn:kernel_G}. As we argued in the previous section, a fixed point of the map $\fQ$ corresponds to a solution of the original equation if 
$$
(\tilde F_\mu)_{\mu\in(0,1]}\equiv(\tilde F_{\kappa,\mu})_{\mu\in(0,1]},\qquad\qquad(\tilde H_\mu)_{\mu\in(0,1]}\equiv (\tilde H_{\kappa,\mu})_{\mu\in(0,1]}
$$
are defined in terms of an effective force 
$$(F_{\kappa,\mu})_{\mu\in[0,1]}$$
by the following equations
\begin{equation}\label{eq:tilde_F_H_2}
 \tilde F_{\kappa,\mu}[\varphi]:= K_\mu\ast F_{\kappa,\mu}[K_\mu\ast\varphi],
 \qquad
 \tilde H_{\kappa,\mu}[\varphi]:=
 K_\mu\ast H_{\kappa,\mu}[K_\mu\ast\varphi]
\end{equation}
and
\begin{equation}\label{eq:H_F}
 H_{\kappa,\mu}[\varphi]:=
 \partial_\mu F_{\kappa,\mu}[\varphi] 
 +
 \rD F_{\kappa,\mu}[\varphi]\cdot (\dot G_\mu\ast F_{\kappa,\mu}[\varphi]).
\end{equation}
In view of Lemma~\ref{lem:lift}, for all $\kappa\in(0,1]$ we would like to construct an effective force 
$$(F_{\kappa,\mu})_{\mu\in[0,1]}$$
such that for some random $R\in[1,\infty]$ with finite moments of all orders it holds
\begin{multline}\label{eq:bound_F}
 [\mu]^{\sigma-\alpha}\,\|\rD^k \tilde F_{\kappa,\mu}[\varphi]\cdot \psi^{\otimes k}\,\|
 \\\leq 
 R\,(|\lambda|^{1/3}[\mu]^{-\alpha}\,\|\psi\|)^k\,(1/2+|\lambda|^{1/3}[\mu]^{-\alpha}\,\|\varphi\|)^{m_\flat},
\end{multline}
and
\begin{multline}\label{eq:bound_H}
 [\mu]^{\sigma-\beta}\,\|\rD^k \tilde H_{\kappa,\mu}[\varphi]\cdot\psi^{\otimes k}\,\|
 \\
 \leq |\lambda|^{1/3}\,R^2\, (|\lambda|^{1/3}[\mu]^{-\alpha}\,\|\psi\|)^k\,(1/2+|\lambda|^{1/3}[\mu]^{-\alpha}\,\|\varphi\|)^{m_\flat},
\end{multline}
for all $k\in\{0,1,2\}$, $\kappa,\mu\in(0,1]$, $\varphi,\psi\in C(\bT)$ and $\lambda\in[-1,1]$. 

\begin{rem}
After establishing the above-mentioned result we will be able to conclude that for every $\kappa\in(0,1]$ there exists $\varPhi_\kappa$ such that $\varPhi_\kappa = G\ast F_\kappa[\varPhi_\kappa]$ and almost surely $\|\varPhi_\kappa\|_{\sC^\alpha(\bM)}\lesssim 1$ uniformly in $\kappa\in(0,1]$. In order to prove almost sure convergence of $\varPhi_\kappa$ as $\kappa\searrow0$ in the Besov space $\sC^\alpha(\bM)$ one has to show in addition that the functionals $\tilde F_{\kappa,\Cdot}$ and $\tilde H_{\kappa,\Cdot}$ converge as $\kappa\searrow0$ in the sense specified in Lemma~\ref{lem:fixed_convergence}.
\end{rem}

The guiding idea is to construct the effective force \(F_{\kappa,\mu}\) so that the associated remainder \(H_{\kappa,\mu}\) is of sufficiently high order in the coupling constant, namely
\[
H_{\kappa,\mu}=O(\lambda^{i_\flat+1})
\]
for a suitably large integer \(i_\flat\in\bN_+\). Equivalently, one approximately solves the flow equation up to an order that captures all relevant renormalization effects. The remaining error term is then expected to behave perturbatively and to satisfy improved estimates, making it plausible that the bound~\eqref{eq:bound_H} holds for some \(\beta>0\). As we will see, the most singular contribution to the effective force \(F_{\kappa,\mu}\) is the noise \(\xi_\kappa\). Since
\[
\|K_\mu\ast\xi_\kappa\|
\lesssim
[\mu]^{\alpha-\sigma},
\qquad
\alpha<\sigma-d/2\le0,
\]
one expects the bound~\eqref{eq:bound_F} to hold for all
\(
\alpha<\sigma-d/2,
\)
provided that the coefficients of the mass counterterms are chosen appropriately. 

The starting point of the construction of an effective force is the ansatz
\begin{equation}\label{eq:intro_ansatz}
 \langle F_{\kappa,\mu}[\varphi],\psi\rangle
 :=\sum_{i=0}^{i_\flat} \sum_{m=0}^{3i} \lambda^i\,\langle F^{i,m}_{\kappa,\mu},\psi\otimes\varphi^{\otimes m}\rangle,
\end{equation} 
for all $\psi,\varphi\in\sS(\bM)$. The distributions $F^{i,m}_{\kappa,\mu}\in\sS'(\bM^{1+m})$ that appear on the RHS of the above equality are called the effective force kernels. By definition $F^{i,m}_{\kappa,\mu}\in\sS'(\bM^{1+m})$ are such that the expression $\langle F^{i,m}_{\kappa,\mu},\psi\otimes\varphi_1\otimes\ldots\otimes\varphi_m\rangle$ is invariant under permutations of the test functions $\varphi_1,\ldots,\varphi_m\in\ \sS(\bM)$. The kernels $F^{i,m}_\kappa$ of the force $F_\kappa$ are defined by an equality analogous to~\eqref{eq:intro_ansatz}. Note that by~\eqref{eq:H_F} we have
\begin{equation}\label{eq:intro_ansatz_H}
 \langle H_{\kappa,\mu}[\varphi],\psi\rangle
 :=\sum_{i=0}^{2i_\flat} \sum_{m=0}^{3i} \lambda^i\,\langle H^{i,m}_{\kappa,\mu},\psi\otimes\varphi^{\otimes m}\rangle,
\end{equation}
for some $H^{i,m}_{\kappa,\mu}\in\sS'(\bM^{1+m})$. Recall that we want to construct the effective force such that $H_{\kappa,\mu}=O(\lambda^{i_\flat+1})$, which implies $H^{i,m}_{\kappa,\mu}=0$ for all $i\in\{0,\ldots,i_\flat\}$.

\begin{rem}\label{rem:force_coefficients}
Upon inspection of~\eqref{eq:spde_F_mild2}, one readily identifies the non-vanishing kernels of the force functional \(F^{i,m}_{\kappa}\), which are given by
\begin{equation}
\begin{gathered}
 F^{0,0}_\kappa(x)=\xi_\kappa(x),
 \qquad 
 F^{1,3}_\kappa(x;\rd y_1,\rd y_2,\rd y_3)=\delta_x(\rd y_1)\delta_x(\rd y_2)\delta_x(\rd y_3),
 \\
 F^{i,1}_\kappa(x;\rd y_1)=c_{\kappa}^{(i)} \delta_x(\rd y_1),
 \quad i\in\{1,\ldots,i_\sharp\},
\end{gathered}
\end{equation}
where \((c_\kappa^{(i)})_{i\in\{1,\ldots,i_\sharp\}}\) are the mass renormalization constants. It is immediate from the definitions that
\(F^{0,0}_\kappa \in \cV^0\), \(F^{1,3}_\kappa \in \cV^3\), and \(F^{i,1}_\kappa \in \cV^1\), where the spaces \(\cV^m\) are introduced in the definition below.
\end{rem}

\begin{dfn}\label{dfn:sVm}
For $m\in\bN_0$ the space $\cV^m$ consists of maps $V\,:\,\bM \times \mathrm{Borel}(\bM^m) \to \bR$ satisfying the following conditions: 
\begin{itemize}
\item[(1)] for every $A\in\mathrm{Borel}(\bM^m)$ the map $x\mapsto V(x;A+x)$ is continuous and $2\pi$ periodic, where $A+x:=\{(y_1+x,\ldots,y_m+x)\in\bM^m\,|\,(y_1,\ldots,y_m)\in A\}$ for $A\subset\bM^m$,
\item[(2)] for every $x\in\bM$ the map $A\mapsto V(x;A)$ is a measure with finite total variation, 
\item[(3)] the following norm
\begin{equation}
 \|V\|_{\cV^m}:=\sup_{x\in\bM}\int_{\bM^m} |V(x;\rd y_1\ldots\rd y_m)| 
\end{equation}
is finite. 
\end{itemize}
\end{dfn}

\begin{rem}
$(\cV^m,\|\Cdot\|_{\cV^m})$ is a Banach space. For every $V\in\cV^m$ and $A\in\mathrm{Borel}(\bM^m)$ the map $x\mapsto V(x;A)$ is measurable. We identify $V\in\cV^m$ with a distribution $V\in\sS'(\bM^{1+m})$, denoted by the same symbol, defined by the measure $V(x;\rd y_1,\ldots,\rd y_m)\,\rd x$.
\end{rem}

Since we require that \(H_{\kappa,\mu}=O(\lambda^{i_\flat+1})\), it follows from~\eqref{eq:H_F} that
\begin{equation}
 \partial_\mu F_{\kappa,\mu}[\varphi] 
 +
 \rD F_{\kappa,\mu}[\varphi]\cdot (\dot G_\mu\ast F_{\kappa,\mu}[\varphi])=H_{\kappa,\mu}[\varphi]=O(\lambda^{i_\flat+1}).
\end{equation}
Using the result of Exercise~\ref{ex:functional_D}, one readily verifies that this condition holds provided the effective force kernels $(F^{i,m}_{\kappa,\Cdot})_{i\in\{0,\ldots,i_\flat\}, m\in\bN_0}$ satisfy the flow equation
\begin{equation}\label{eq:flow_deterministic_i_m}
 \partial_\mu F_{\kappa,\mu}^{i,m}=-\sum_{j=0}^i\sum_{k=0}^m
 \,(1+k)\, \fB(\dot G_\mu,F^{j,1+k}_{\kappa,\mu},F^{i-j,m-k}_{\kappa,\mu}),
\end{equation}
where the map $\fB$ is introduced below.

\begin{dfn}\label{dfn:map_B}
Let $m\in\bN_0$, $k\in\{0,\ldots,m\}$. The map $$\fB\,:\,\sS(\bM)\times \cV^{1+k}\times\cV^{m-k}\to\cV^m$$ is defined by
\begin{multline}\label{eq:fB1_dfn}
 \fB(G,W,U)(x;\rd y_1,\ldots,\rd y_m)
 :=
 \frac{1}{m!} \sum_{\pi\in\cP_m}
 \int_{\bM^2} W(x;\rd y,\rd y_{\pi(1)},\ldots,\rd y_{\pi(k)})
 \\\times
 \,G(y-z)\, 
 U(z;\rd y_{\pi(k+1)},\ldots,\rd y_{\pi(m)})\,\rd z,
\end{multline}
where $\cP_m$ is the group of permutations of $\{1,\ldots,m\}$.
\end{dfn}
\begin{ex}\label{ex:functional_D}
	Let $V\in\cV^{1+k}$, $W\in\cV^{m-k}$ and $G\in\sS(\bM)$. Define the functionals $\tilde V,\tilde W,\tilde U$ by the relations
	\begin{equation}
		\langle \tilde V[\varphi],\psi\rangle
		:=\langle V,\psi\otimes\varphi^{\otimes (1+k)}\rangle,
		\qquad
		\langle \tilde W[\varphi],\psi\rangle
		:=\langle W,\psi\otimes\varphi^{\otimes (m-k)}\rangle
	\end{equation}
	and
	\begin{equation}
		\tilde U[\varphi]:=\rD \tilde V[\varphi] \cdot (G\ast \tilde W[\varphi]).
	\end{equation}
	Show that
	\begin{equation}
		\langle \tilde U[\varphi],\psi\rangle
		:=\langle U,\psi\otimes\varphi^{\otimes m}\rangle,
	\end{equation}
	where $U\in\cV^m$ satisfies
	\begin{equation}
		U = (1+k)\, \fB(G,V,W).
	\end{equation}
\end{ex}
\begin{ex}\label{ex:fB1_bound}
Prove that the map $\fB:\sS(\bM)\times \cV^{1+k}\times\cV^{m-k}\to\cV^m$ is well defined and
\begin{equation}\label{eq:fB1_bound}
 \|\fB(G,W,U)\|_{\cV^{m}}
 \leq
 \|G\|_\cK\,\|W\|_{\cV^{1+k}} \|U\|_{\cV^{m-k}}.
\end{equation}
\end{ex}

The basic idea behind the flow equation approach is a recursive construction of the effective force kernels $(F^{i,m}_{\kappa,\mu})_{i\in\{0,\ldots,i_\flat\}, m\in\bN_0}$ for all $\kappa\in(0,1]$, $\mu\in[0,1]$. For each $\kappa\in(0,1]$ we define the kernels in such a way that the map
\begin{equation}
 [0,1]\ni\mu\mapsto F^{i,m}_{\kappa,\mu} \in\cV^m
\end{equation}
is continuous and continuously differentiable for $\mu\in(0,1]$ using the following recursive algorithm:
\begin{enumerate}
 \item[(0)] We set $F^{0,0}_{\kappa,\mu}=\xi_\kappa$ and $F^{i,m}_{\kappa,\mu}= 0$ if $m>3i$,
 \item[(I)] Assuming that all $F^{i,m}_{\kappa,\mu}$ with $i<i_\circ$, or $i=i_\circ$ and $m>m_\circ$ were constructed we define $\dot F^{i,m}_{\kappa,\mu}$ with $i=i_\circ$ and $m=m_\circ$ to be the RHS of~\eqref{eq:flow_deterministic_i_m}.
 \item[(II)] Subsequently, $F^{i,m}_{\kappa,\mu}$ is defined by
 $
  F^{i,m}_{\kappa,\mu} = F^{i,m}_{\kappa} + \int_0^\mu \dot F^{i,m}_{\kappa,\eta}\,\rd\eta.
 $
\end{enumerate}

\begin{rem}
The recursive construction of the kernels is performed according to a fixed total ordering on the index set $(i,m)$. The ordering is lexicographic with priority given to $i$ in increasing order, and within each fixed $i$ the index $m$ is traversed in decreasing order. This ordering can be visualized as follows:
\[
\renewcommand{\arraystretch}{1.4}
\begin{array}{cc|ccccccc}
	&& &  &  & m &  &  & 
	\\
	&& 0 & 1 & 2 & 3 & 4 & 5 & 6
	\\
	\hline
	&0 
	& F^{0,0}_{\kappa,\mu}
	& 
	& 
	& 
	& 
	& 
	&
	\\
	i&1 
	& F^{1,0}_{\kappa,\mu}
	& F^{1,1}_{\kappa,\mu}
	& F^{1,2}_{\kappa,\mu}
	& F^{1,3}_{\kappa,\mu}
	& 
	& 
	& 
	\\
	&2 
	& F^{2,0}_{\kappa,\mu}
	& F^{2,1}_{\kappa,\mu}
	& F^{2,2}_{\kappa,\mu}
	& F^{2,3}_{\kappa,\mu}
	& F^{2,4}_{\kappa,\mu}
	& F^{2,5}_{\kappa,\mu}
	& F^{2,6}_{\kappa,\mu}
\end{array}
\]
Concretely, one first completes each row from right to left, and then moves to the last entry of the next row.
\end{rem}

\begin{dfn}
The finite list of the effective force kernels $(F^{i,m}_{\kappa,\Cdot})_{i\in\{0,\ldots,i_\flat\},m\in\{0,\ldots,3i\}}$ is called the {\it enhanced noise}. 
\end{dfn}

\begin{rem}
The above procedure cannot be used to construct directly $F^{i,m}_{\kappa,\mu}$ with $\kappa=0$. In fact, we expect that $F^{i,m}_{0,\mu}\notin\cV^m$. For example, $F^{0,0}_{0,\mu}=\xi\notin C(\bT)=\cV^0$. Instead, the kernels $F^{i,m}_{0,\mu}$ are defined probabilistically. The stochastic estimates for the enhanced noise are stated in Theorem~\ref{thm:stochastic_estimates} below.  
\end{rem}

\begin{rem}[$\spadesuit$]\label{rem:counterterms}
Note that the effective force kernels depend implicitly on the mass renormalization constants $(c^{(i)}_\kappa)_{i\in\{1,\ldots,i_\sharp\}}$. More specifically, the kernels $F^{i,m}_{\kappa,\mu}$ with $i\in\{1,\ldots,i_\flat\}$ and $m\in\{2,3,\ldots\}$ depend on $(c^{(j)}_\kappa)_{j\in\{1,\ldots,i_\sharp\wedge (i-1)\}}$ and the kernels $F^{i,m}_{\kappa,\mu}$ with $i\in\{1,\ldots,i_\flat\}$ and $m\in\{0,1\}$ depend on $(c^{(j)}_\kappa)_{j\in\{1,\ldots,i_\sharp\wedge i\}}$.
\end{rem}

\begin{ex}\label{ex:flow_consistent}
Check that the condition $F^{i,m}_{\kappa,\mu}= 0$ if $m>3i$ is consistent with the conditions stated in Items~(I) and~(II). Prove that $F^{i,m}_{\kappa,\mu}= 0$ for all $m>(2i+1)\wedge3i$.
\end{ex}

\begin{rem}\label{rem:explicit_coefficients}
Let us list some examples of effective force kernels. To this end, it will be convenient to use a diagrammatical notation. We view the diagrams as placeholders for certain multi-linear functionals of the noise. Since the diagrams do not play any role in the flow equation approach we refrain from defining precise rules that are used to draw them. Instead, for each diagram we provide an explicit expression that it represents. Note that the edges of the diagrams that are introduced below represent the fluctuation propagator $$(G-G_\mu)(x-y)=:\,x\,\<GB>\,y$$ and not the Green function $$G(x-y)=:\,x\,\<G>\,y$$. First note that
\begin{equation}
 F^{0,0}_{\kappa,\mu}(x)=\xi_\kappa(x) =: \<0>_\kappa(x),
\end{equation}
and
\begin{multline}
	F^{1,3}_{\kappa,\mu}(x;\rd y_1,\rd y_2,\rd y_3)
	=
	F^{1,3}_\kappa(x;\rd y_1,\rd y_2,\rd y_3)
	\\=
	\delta_x(\rd y_1)\delta_x(\rd y_2)\delta_x(\rd y_3)
	=:
	\<o>\,(x;\rd y_1,\rd y_2,\rd y_3).
\end{multline}
Let
\begin{equation}
 \<1b>_{\kappa,\mu}(x):=((G-G_\mu)\ast\<0>_\kappa)(x)\,,
 \qquad
 \<2b>_{\kappa,\mu}(x):=(\<1b>_{\kappa,\mu}(x))^2+c_{\kappa}^{(1)}/3.
\end{equation}
We have
\begin{equation}
\begin{gathered}
 F^{1,2}_{\kappa,\mu}(x;\rd y_1,\rd y_2)=3\,\<1b>_{\kappa,\mu}(x)\, \delta_x(\rd y_1)\delta_x(\rd y_2)=:3\,\<1o>_{\,\kappa,\mu}(x;\rd y_1,\rd y_2),
 \\
 F^{1,1}_{\kappa,\mu}(x;\rd y_1)=3\,\<2b>_{\kappa,\mu}(x)\, \delta_x(\rd y_1)=:3\,\<2o>_{\kappa,\mu}(x;\rd y_1),
 \\
 F^{1,0}_{\kappa,\mu}(x)=(\<1b>_{\kappa,\mu}(x))^3+c_{\kappa}^{(1)}\,\<1b>_{\kappa,\mu}(x) =: \<3b>_{\kappa,\mu}(x),
\end{gathered} 
\end{equation}
The kernel
\begin{equation}
 F^{2,5}_{\kappa,\mu}(x;\rd y_1,\ldots,\rd y_5)=:3\,\<oo>_{\,\mu}(x;\rd y_1,\ldots,\rd y_5)
\end{equation}
is defined as the symmetrization, with respect to permutations of $(y_1,\ldots,y_5)$, of the kernel
\begin{equation}
 3\,\delta_x(\rd y_1)\delta_x(\rd y_2)\,(G-G_\mu)(x-y_3)\, \delta_{y_3}(\rd y_4)\delta_{y_3}(\rd y_5)\,\rd y_3.
\end{equation}
The construction of kernels $F^{2,4}_{\kappa,\mu}$, $F^{2,3}_{\kappa,\mu}$, $F^{2,2}_{\kappa,\mu}$ is left as an exercise. Let
\begin{equation}
 \<30b>_{\!\!\kappa,\mu}(x):=
 ((G-G_\mu)\ast
 \<3b>_{\kappa,\mu})(x).
\end{equation}
The remaining second order kernels are given by
\begin{multline}
 F^{2,1}_{\kappa,\mu}(x;\rd y_1) = 
 9\,\<2b>_{\kappa,\mu}(x)\,
 (G-G_\mu)(x-y_1)\,
 \<2b>_{\kappa,\mu}(y_1)\,\rd y_1
 +
 c_{\kappa}^{(2)}\,\delta_x(\rd y_1)
 \\
 +
 6\,\<1b>_{\,\kappa,\mu}(x)\,\<30b>_{\!\!\kappa,\mu}(x)
 \,\delta_x(\rd y_1)
 =: 9\,\<22o>_{\kappa,\mu}(x;\rd y_1) + 6\,\<31o>_{\kappa,\mu}(x;\rd y_1) 
\end{multline}
and
\begin{equation}
 F^{2,0}_{\kappa,\mu}(x) = 
 3\,\<2b>_{\kappa,\mu}(x)
 \<30b>_{\!\!\kappa,\mu}(x)
 + c_{\kappa}^{(2)}\, \<1b>_{\kappa,\mu}(x)=: 3\,\<32b>_{\kappa,\mu}(x).
\end{equation}
\end{rem}

\begin{ex}
Convince yourself that the expressions given in Remark~\ref{rem:explicit_coefficients} satisfy the condition $F^{i,m}_{\kappa,0}=F^{i,m}_{\kappa}$, where $F^{i,m}_{\kappa}$ are the force kernels listed in Remark~\ref{rem:force_coefficients}. 
\end{ex}

\begin{ex}
Using the notation $\dot G_\mu(x-y):=\,x\,\<GR>\,y$ draw the diagrams representing the kernels $\partial_\mu F^{i,m}_{\kappa,\mu}$ for $i\in\{0,1,2\}$. Verify the formulas given in Remark~\ref{rem:explicit_coefficients} and write explicit expressions for the kernels $F^{2,4}_{\kappa,\mu}$, $F^{2,3}_{\kappa,\mu}$, $F^{2,2}_{\kappa,\mu}$. For simplicity, you can ignore numerical prefactors. For example, for $i=2$ and $m=0$, we obtain
\begin{equation}
\begin{aligned}
	\partial_\mu F^{2,0}_{\kappa,\mu} &= 3\partial_\mu \<32b>_{\kappa,\mu} =
	-9\,\<32b2>_{\kappa,\mu} - 6\,\<32b1>_{\kappa,\mu} -3\,\<32b3>_{\kappa,\mu}
	\\
	&
	=-9\,\fB(\dot G_\mu,\<22o>_{\kappa,\mu},\<0>_\kappa)
	-6\,\fB(\dot G_\mu,\<31o>_{\kappa,\mu},\<0>_\kappa)
	-3\,\fB(\dot G_\mu,\<2o>_{\kappa,\mu},\<3b>_{\kappa,\mu})
	\\
	&=
	-\fB(\dot G_\mu,F^{2,1}_{\kappa,\mu},F^{0,0}_{\kappa,\mu})
	-\fB(\dot G_\mu,F^{1,1}_{\kappa,\mu},F^{1,0}_{\kappa,\mu}),
\end{aligned}
\end{equation}
which is consistent with the flow equation~\eqref{eq:flow_deterministic_i_m}.
\end{ex}

\begin{dfn}\label{def:rho}
Let $\varepsilon\in[0,\infty)$ and $\alpha\equiv\alpha_\varepsilon:=\sigma-d/2-\varepsilon$, $\gamma\equiv\gamma_\varepsilon:=3\sigma-d-3\varepsilon$. For $i,m\in\bN_0$ we define
\begin{equation}
 \varrho_\varepsilon(i,m) := 
 \alpha_\varepsilon-\sigma
 - m\, \alpha_\varepsilon
 + i\, \gamma_\varepsilon \in \bR.
\end{equation}
We omit $\varepsilon$ if $\varepsilon=0$. Let $i_\flat,i_\sharp\in\bN_+$ be the smallest positive integers such that $\varrho(i_\flat+1,0)>0$, $\varrho(i_\sharp+1,1)>0$, respectively.
\end{dfn}

The quantity $\alpha_\varepsilon$ coincides with the regularity of the solution,
$\alpha_\varepsilon-\sigma$ is the regularity of the driving noise, and
$\gamma_\varepsilon$ plays the role of the gain due to the subcriticality of the
equation. The quantity \(\varrho_\varepsilon(i,m)\) should be interpreted as the \emph{order} of the kernel \(F^{i,m}\). More precisely, it plays the role of the scaling exponent governing the behavior of the effective force kernel \(F^{i,m}_{\kappa,\mu}\) as a function of the scale parameter \(\mu\). As will be shown later, these kernels satisfy estimates of the form
\[
\|F^{i,m}_{\kappa,\mu}\|_\mu
\lesssim
[\mu]^{\varrho_\varepsilon(i,m)},
\]
uniformly in \(\kappa\in(0,1]\). In particular, kernels of positive order become small at large scales, whereas kernels of non-positive order may diverge, but in a quantitatively controlled manner. See~\eqref{eq:stochastic_estimate} below for the precise statement.

\begin{rem}
	Actually, if the bounds of the above form are known for the \emph{relevant} effective force kernels, i.e. $F^{i,m}_{\kappa,\mu}$ such that $\varrho(i,m)\leq 0$, it can be easily proved deterministically for the \emph{irrelevant} kernels, i.e. $F^{i,m}_{\kappa,\mu}$ such that $\varrho(i,m)>0$ as discussed in Remark~\ref{rem:irrelevant} below. Thus only the relevant kernels have to be controlled directly, and only these kernels may require renormalization. Since the kernels $F^{i,1}_{\kappa,\mu}$ with $i\in\{i_\sharp+1,\ldots,i_\flat\}$ are irrelevant no renormalization should be necessary to bound them. This is an intuitive reason why only the counterterms $(c_\kappa^{(i)})_{i\in\{1,\ldots,i_\sharp\}}$ are included in the expression for the force~\eqref{eq:force}. See Remark~\ref{rem:symmetries} for more details.
\end{rem}

\begin{rem}
	In the case \(d=5\) and \(\sigma=2\), which closely resembles the elliptic stochastic quantization equation for the \(\Phi^4_3\) model, the renormalization problem is analogous to that of the dynamical \(\Phi^4_3\) model. The relevant effective force kernels and their corresponding orders are given by
	\[
	\renewcommand{\arraystretch}{1.3}
	\begin{array}{lll}
		F^{0,0}_{\kappa,\mu}
		&= \<0>_\kappa,
		&
		\varrho_\varepsilon(0,0)
		= -\frac52-\varepsilon,
		\\
		
		F^{1,3}_{\kappa,\mu}
		&= \<o>,
		&
		\varrho_\varepsilon(1,3)
		= -\varepsilon,
		\\
		
		F^{1,2}_{\kappa,\mu}
		&= 3\,\<1o>_{\,\kappa,\mu},
		&
		\varrho_\varepsilon(1,2)
		= -\frac12-2\varepsilon,
		\\
		
		F^{1,1}_{\kappa,\mu}
		&= 3\,\<2o>_{\kappa,\mu},
		&
		\varrho_\varepsilon(1,1)
		= -1-3\varepsilon,
		\\
		
		F^{1,0}_{\kappa,\mu}
		&= \<3b>_{\kappa,\mu},
		&
		\varrho_\varepsilon(1,0)
		= -\frac32-4\varepsilon,
		\\
		
		F^{2,1}_{\kappa,\mu}
		&= 9\,\<22o>_{\kappa,\mu}
		+ 6\,\<31o>_{\kappa,\mu},\quad\qquad
		&
		\varrho_\varepsilon(2,1)
		= -6\varepsilon,
		\\
		
		F^{2,0}_{\kappa,\mu}
		&= 3\,\<32b>_{\kappa,\mu},
		&
		\varrho_\varepsilon(2,0)
		= -\frac12-7\varepsilon.
	\end{array}
	\]
\end{rem}

\begin{rem}
Note that $\alpha_\varepsilon\leq0$ for all $\varepsilon\in[0,\infty)$. Moreover, $\gamma_\varepsilon>0$ for all $\varepsilon\in[0,\infty)$ in a sufficiently small neighborhood of $\varepsilon=0$ by the condition of subcriticality. In particular, $i_\flat,i_\sharp\in\bN_+$ are well defined and there are only finitely many $i,m\in\bN_0$ such that $m\leq3i$ and $\varrho(i,m)\leq 0$. Recall that if $m>3i$, then $F^{i,m}_{\kappa,\mu}$ vanishes identically. For arbitrary $\varepsilon\in(0,\infty)$ and $i,m\in\bN_0$ such that $m\leq 3i$ it holds $\varrho_\varepsilon(i,m)<\varrho(i,m)$.
\end{rem}

\begin{rem}[$\spadesuit$]\label{rem:rho}
We claim that there exists $\varepsilon_\diamond\in(0,\sigma)$ such that for all $\varepsilon\in(0,\varepsilon_\diamond)$ and all  $i,m,l\in\bN_0$ it holds \mbox{$\varrho_\varepsilon(i,m)+l>0$} if \mbox{$\varrho(i,m)+l>0$}. In what follows, we assume that $\varepsilon\in(0,\varepsilon_\diamond)$.
\end{rem}

\begin{dfn}
For $n\in\bN_+$ let $\cK^n\subset\sS'(\bM^n)$ be the space of signed measures on $\bM^n$ with finite total variation. We set $\|K\|_{\cK^n} = \int_{\bM^n} |K(\rd x_1\ldots\rd x_n)|$. Given $K\in\cK=\cK^1$ and $n\in\bN_+$ we set $K^{\otimes n}:=K\otimes\ldots\otimes K\in\cK^n$.
\end{dfn}

In the theorem below we state the {\it stochastic estimates} for the enhanced noise. Recall that the enhanced noise coincides with the following finite list of the effective force kernels $(F^{i,m}_{\kappa,\Cdot})_{i\in\{0,\ldots,i_\flat\},m\in\{0,\ldots,3i\}}.$ By the deterministic results established in Section~\ref{sec:effective_equation} and Corollary~\ref{cor:stochastic_estimates} these estimates imply that for every $\kappa\in(0,1]$ there exists $\varPhi_\kappa$ such that $\varPhi_\kappa = G\ast F_\kappa[\varPhi_\kappa]$ and $\bE\big(\sup_{\kappa\in(0,1]}\|\varPhi_\kappa\|^n_{\sC^\alpha(\bM)}\big)<\infty$ for all $n\in\bN_0$. For the proof of the convergence of $\varPhi_\kappa$ as $\kappa\searrow0$ see Lemma~\ref{lem:fixed_convergence}.

\begin{thm}\label{thm:stochastic_estimates}
There exists a choice of the mass renormalization constants $$(c^{(i)}_\kappa)_{i\in\{1,\ldots,i_\sharp\}}$$ in the expression~\eqref{eq:force} for the force $F_\kappa$ and a random variable $\tilde R\in[1,\infty]$ such that $\bE \tilde R^n<\infty$ for all $n\in\bN_+$ and it holds 
\begin{equation}\label{eq:stochastic_estimate}
 \|K_\mu^{\otimes(1+m)}\ast F^{i,m}_{\kappa,\mu}\|_{\cV^m} \leq \tilde R\,[\mu]^{\varrho_\varepsilon(i,m)}
\end{equation} 
for all $i\in\{0,\ldots,i_\flat\}$, $m\in\{0,\ldots,3i\}$, $\kappa,\mu\in(0,1]$.
\end{thm}
\begin{proof}
The theorem follows from the bounds for the cumulants of the effective force kernels established in Theorem~\ref{thm:cumulants} and Exercise~\ref{ex:cumulants} together with a Kolmogorov-type argument from Lemma~\ref{lem:probabilistic_bounds}. 
\end{proof}

\begin{rem}
	The exponent \(\varrho_\varepsilon(i,m)\) appearing in the bound of the
	previous theorem plays a role analogous to homogeneity in the theory of
	regularity structures. It should not, however, be identified with regularity.
	Let us illustrate this distinction on a simple example.
	
	Suppose, for concreteness, that \(d=5\) and \(\sigma=2\). Then
	\[
	\<1>_{\kappa=0}=G\ast\xi\in\sC^\alpha(\bM),
	\qquad
	\alpha=\sigma-\frac d2-\varepsilon=-\frac12-\varepsilon .
	\]
	The theorem implies, in particular, that
	\begin{equation}\label{eq:rem_tree_bound}
		\big\|K_\mu\ast\<32b>_{\kappa,\mu}\big\|
		\lesssim
		[\mu]^{-1/2-7\varepsilon}
	\end{equation}
	almost surely, uniformly in \(\kappa,\mu\in(0,1]\). The important point is that
	the tree \(\<32b>_{\kappa,\mu}\) itself depends on the scale \(\mu\). Thus
	\eqref{eq:rem_tree_bound} is not a statement about the regularity of a fixed
	distribution. Rather, it is a scale-dependent bound for a family of objects
	adapted to the flow equation.
	
	This distinction is essential. Consider instead the tree with standard,
	scale-independent edges \(x\,\<G>\,y=G(x-y)\),
	\begin{equation}
		\<32>_\kappa(x)
		:=
		\<2>_{\kappa}(x)\,\<30>_{\!\!\kappa}(x)
		+
		\frac{1}{3}c_{\kappa}^{(2)}\,\<1>_{\kappa}(x).
	\end{equation}
	One should not expect an estimate analogous to
	\eqref{eq:rem_tree_bound} for this object. Its regularity cannot be better than
	that of \(\<2>_\kappa\), which belongs to the Besov space \(\sC^{-1-\varepsilon}(\bM)\). Indeed, the
	natural bound is only
	\begin{equation}
		\big\|K_\mu\ast\<32>_{\kappa}\big\|
		\lesssim
		[\mu]^{-1-\varepsilon},
	\end{equation}
	uniformly in \(\kappa,\mu\in(0,1]\), and this is too singular for the estimates
	needed here.
	
	Different approaches overcome this obstruction in different ways. In the
	paracontrolled approach one isolates the resonant part and estimates the
	renormalized resonant product
	\begin{equation}
		\<32t>_\kappa
		:=
		\<2>_{\kappa}\odot \<30>_{\!\!\kappa}
		+
		\frac{1}{3}c_{\kappa}^{(2)}\,\<1>_{\kappa},
	\end{equation}
	where \(\odot\) denotes the resonant product. This object satisfies the improved
	bound
	\begin{equation}
		\big\|K_\mu\ast\<32t>_{\kappa}\big\|
		\lesssim
		[\mu]^{-1/2-5\varepsilon},
	\end{equation}
	uniformly in \(\kappa,\mu\in(0,1]\); see, for example,~\cite{MWX16}. In the
	regularity structure framework, one instead works with a recentered tree,
	\begin{equation}
		\<32s>_\kappa(x,y)
		:=
		\<2>_\kappa(y)\,
		\big(\<30>_{\!\!\kappa}(y)-\<30>_{\!\!\kappa}(x)\big)
		+
		\frac{1}{3}c_{\kappa}^{(2)}\,\<1>_{\kappa}(y),
	\end{equation}
	and proves the corresponding estimate
	\begin{equation}
		\sup_{x\in\bM}
		\left|
		\int K_\mu(x-y)\,\<32s>_{\kappa}(x,y)\,\rd y
		\right|
		\lesssim
		[\mu]^{-1/2-5\varepsilon},
	\end{equation}
	again uniformly in \(\kappa,\mu\in(0,1]\); see, for example~\cite{Hai15}.
	
	The flow equation approach achieves the same improvement in a different way.
	The trees representing the effective force kernels are not recentered; hence no
	structure group appears, and there is no positive renormalization. Instead, the
	improvement comes from the scale dependence of the edges. The kernels
	\[
	x\,\<GB>\,y=(G-G_\mu)(x-y)
	\]
	become small as \(\mu\searrow0\). For example,
	\[
	\|G-G_\mu\|_{L^1(\bM)}
	\leq
	\int_0^\mu \|\dot G_\eta\|_{L^1(\bM)}\,\rd\eta
	\lesssim
	[\mu]^\sigma .
	\]
	This smallness is what allows the scale-dependent tree
	\(\<32b>_{\kappa,\mu}\) to satisfy the desired bound
	\eqref{eq:rem_tree_bound}, even though the analogous fixed tree
	\(\<32>_\kappa\) has much worse regularity.
\end{rem}

\begin{lem}
	Assume that the bound~\eqref{eq:stochastic_estimate} holds true for all $F^{i,m}_{\kappa,\mu}$ with $i<i_\circ$, or $i=i_\circ$ and $m>m_\circ$. Then $$\|K_\mu^{\otimes(1+m)}\ast \partial_\mu F^{i,m}_{\kappa,\mu}\|_{\cV^m} \lesssim [\mu]^{\varrho_\varepsilon(i,m)-\sigma}$$ for $i=i_\circ$ and $m=m_\circ$, where $\dot F^{i,m}_{\kappa,\mu}$ denotes the RHS of~\eqref{eq:flow_deterministic_i_m}. 
\end{lem}
\begin{proof}
	By the flow equation~\eqref{eq:flow_deterministic_i_m}, we have
	\[
	\partial_\mu F_{\kappa,\mu}^{i,m}
	=
	-
	\sum_{j=0}^i
	\sum_{k=0}^m
	(1+k)\,
	\fB\bigl(
	\dot G_\mu,
	F^{j,1+k}_{\kappa,\mu},
	F^{i-j,m-k}_{\kappa,\mu}
	\bigr).
	\]
	It is therefore enough to estimate each term in this finite sum. Set
	\[
	\tilde G_\mu:=\fP_\mu^2\dot G_\mu,
	\qquad
	\tilde F^{i,m}_{\kappa,\mu}
	:=
	K_\mu^{\otimes(1+m)}
	\ast F^{i,m}_{\kappa,\mu}.
	\]
	Using the identity $\fP_\mu K_\mu=\delta_0$ and associativity of
	convolution, we obtain
	\begin{align}
		K_\mu^{\otimes(1+m)}
		\ast
		\fB\bigl(
		\dot G_\mu,
		F^{j,1+k}_{\kappa,\mu},
		F^{i-j,m-k}_{\kappa,\mu}
		\bigr)
		=
		\fB\bigl(
		\tilde G_\mu,
		\tilde F^{j,1+k}_{\kappa,\mu},
		\tilde F^{i-j,m-k}_{\kappa,\mu}
		\bigr).
	\end{align}
	Hence, by Exercise~\ref{ex:fB1_bound},
	\begin{align}
		&\bigl\|
		K_\mu^{\otimes(1+m)}
		\ast
		\fB\bigl(
		\dot G_\mu,
		F^{j,1+k}_{\kappa,\mu},
		F^{i-j,m-k}_{\kappa,\mu}
		\bigr)
		\bigr\|_{\cV^m}
		\nonumber \\
		&\qquad \leq
		\|\tilde G_\mu\|_{\cK}\,
		\|\tilde F^{j,1+k}_{\kappa,\mu}\|_{\cV^{1+k}}\,
		\|\tilde F^{i-j,m-k}_{\kappa,\mu}\|_{\cV^{m-k}} .
	\end{align}
	By the stochastic estimate~\eqref{eq:stochastic_estimate}, we have
	\[
	\|\tilde F^{j,1+k}_{\kappa,\mu}\|_{\cV^{1+k}}
	\lesssim
	[\mu]^{\varrho_\varepsilon(j,1+k)},
	\qquad
	\|\tilde F^{i-j,m-k}_{\kappa,\mu}\|_{\cV^{m-k}}
	\lesssim
	[\mu]^{\varrho_\varepsilon(i-j,m-k)}.
	\]
	Moreover, $\|\tilde G_\mu\|_{\cK}\lesssim1$. Therefore
	\begin{align}
		\bigl\|
		K_\mu^{\otimes(1+m)}
		\ast
		\fB\bigl(
		\dot G_\mu,
		F^{j,1+k}_{\kappa,\mu},
		F^{i-j,m-k}_{\kappa,\mu}
		\bigr)
		\bigr\|_{\cV^m}
		\lesssim
		[\mu]^{
			\varrho_\varepsilon(j,1+k)
			+
			\varrho_\varepsilon(i-j,m-k)
		}.
	\end{align}
	Using the identity
	\[
	\varrho_\varepsilon(j,1+k)
	+
	\varrho_\varepsilon(i-j,m-k)
	=
	\varrho_\varepsilon(i,m)-\sigma,
	\]
	we conclude that each summand in the flow equation satisfies
	\[
	\bigl\|
	K_\mu^{\otimes(1+m)}
	\ast
	\fB\bigl(
	\dot G_\mu,
	F^{j,1+k}_{\kappa,\mu},
	F^{i-j,m-k}_{\kappa,\mu}
	\bigr)
	\bigr\|_{\cV^m}
	\lesssim
	[\mu]^{\varrho_\varepsilon(i,m)-\sigma}.
	\]
	Summing over the finitely many indices $j$ and $k$ gives
	\[
	\bigl\|
	K_\mu^{\otimes(1+m)}
	\ast
	\partial_\mu F_{\kappa,\mu}^{i,m}
	\bigr\|_{\cV^m}
	\lesssim
	[\mu]^{\varrho_\varepsilon(i,m)-\sigma},
	\]
	which finishes the proof.
\end{proof}

\begin{rem}\label{rem:irrelevant}
	Suppose that the bound~\eqref{eq:stochastic_estimate} holds for all kernels $F^{i,m}_{\kappa,\mu}$ with either $i<i_\circ$, or $i=i_\circ$ and $m>m_\circ$. One then finds that the same bound automatically holds for the kernel $F^{i,m}_{\kappa,\mu}$ with $i=i_\circ$ and $m=m_\circ$ provided $\varrho(i,m)>0$, i.e.\ in the case where $F^{i,m}_{\kappa,\mu}$ is irrelevant. Indeed, in this situation the kernel vanishes at the initial scale, and therefore
	\begin{equation}
		F^{i,m}_{\kappa,\mu}
		=
		F^{i,m}_{\kappa}
		+
		\int_0^\mu \dot F^{i,m}_{\kappa,\eta}\,\rd\eta
		=
		\int_0^\mu \dot F^{i,m}_{\kappa,\eta}\,\rd\eta,
	\end{equation}
	where we used $F^{i,m}_{\kappa}=0$, cf.\ Remark~\ref{rem:force_coefficients}. Applying the bound from the previous lemma together with Exercise~\ref{ex:K_mu}~(3), we obtain
	\begin{equation}
		\|K_\mu^{\otimes(1+m)}\ast F^{i,m}_{\kappa,\mu}\|_{\cV^m}
		\leq
		\int_0^\mu
		\|K_\eta^{\otimes(1+m)}\ast \dot F^{i,m}_{\kappa,\eta}\|_{\cV^m}\,\rd\eta
		\lesssim
		\int_0^\mu [\eta]^{\varrho_\varepsilon(i,m)-\sigma}\,\rd\eta.
	\end{equation}
	If $\varrho_\varepsilon(i,m)>0$, then
	\begin{equation}
		\int_0^\mu [\eta]^{\varrho_\varepsilon(i,m)-\sigma}\,\rd\eta
		\lesssim
		[\mu]^{\varrho_\varepsilon(i,m)}.
	\end{equation}
	Hence, for irrelevant kernels the bound~\eqref{eq:stochastic_estimate} propagates directly by induction. This mechanism breaks down for relevant kernels, since when $\varrho_\varepsilon(i_\circ,m_\circ)<0$ the integrand $[\eta]^{\varrho_\varepsilon(i_\circ,m_\circ)-\sigma}$ fails to be integrable at $\eta=0$.
\end{rem}

\begin{cor}\label{cor:stochastic_estimates}
	There exists a deterministic $c\in(0,\infty)$ such that for all
	$\lambda\in[-1,1]$ the functionals $\tilde F_{\kappa,\Cdot}$ and
	$\tilde H_{\kappa,\Cdot}$ defined in terms of the enhanced noise
	by~\eqref{eq:intro_ansatz},~\eqref{eq:H_F} and~\eqref{eq:tilde_F_H_2}
	satisfy the assumptions of Lemma~\ref{lem:lift} with
	\[
	R=c\,\tilde R,
	\qquad
	\alpha=\sigma-d/2-\varepsilon,
	\qquad
	\beta=\varrho_\varepsilon(i_\flat+1,0),
	\qquad
	m_\flat=6i_\flat-1
	\]
	for all $\kappa\in(0,1]$, where $\tilde R\in[1,\infty]$ is the random
	variable introduced in Theorem~\ref{thm:stochastic_estimates}.
\end{cor}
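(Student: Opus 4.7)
The plan is to verify the two hypotheses~\eqref{eq:bound_F1} and~\eqref{eq:bound_H1} of Lemma~\ref{lem:lift} directly from the ansatz~\eqref{eq:intro_ansatz} for the effective force and the stochastic estimates of Theorem~\ref{thm:stochastic_estimates}. Writing $\tilde F^{i,m}_{\uv,\sIR}:=K_\sIR^{\otimes(1+m)}\ast F^{i,m}_{\uv,\sIR}$, the ansatz together with~\eqref{eq:tilde_F_H_2} expresses the $k$-th directional derivative of $\tilde F_{\uv,\sIR}$ as a finite linear combination of $\lambda^i\langle \tilde F^{i,m}_{\uv,\sIR},\psi^{\otimes k}\otimes\varphi^{\otimes(m-k)}\rangle$ running over $0\leq i\leq i_\flat$ and $k\leq m\leq 3i$. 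Theorem~\ref{thm:stochastic_estimates} bounds each coefficient by $\tilde R\,[\sIR]^{\varrho_\varepsilon(i,m)}$. Two elementary observations then do the rest: since $m\leq 3i$ and $|\lambda|\leq 1$ one has $|\lambda|^i\leq|\lambda|^{m/3}$, and since $\gamma_\varepsilon>0$ by Remark~\ref{rem:rho} one has $[\sIR]^{i\gamma_\varepsilon}\leq 1$. Multiplying the termwise bound by $[\sIR]^{\sigma-\alpha_\varepsilon}$ and extracting the factor $|\lambda|^{k/3}[\sIR]^{-k\alpha_\varepsilon}\|\psi\|^k$ from the $\psi$-slots presents each summand in the form $(|\lambda|^{1/3}[\sIR]^{-\alpha_\varepsilon}\|\psi\|)^k(|\lambda|^{1/3}[\sIR]^{-\alpha_\varepsilon}\|\varphi\|)^{m-k}$, and the finite polynomial in $\|\varphi\|$ is then majorized by $(1/2+|\lambda|^{1/3}[\sIR]^{-\alpha_\varepsilon}\|\varphi\|)^{m_\flat}$ with $m_\flat$ dominating $\max(m-k)=3i_\flat$.

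For $\tilde H_{\uv,\Cdot}$ the key point is that the construction of the effective force through the flow equation~\eqref{eq:flow_deterministic_i_m} forces $H^{I,M}_{\uv,\sIR}=0$ whenever $I\leq i_\flat$. Hence in the expansion~\eqref{eq:intro_ansatz_H} only indices $I\in\{i_\flat+1,\ldots,2i_\flat\}$ survive, and each remaining $H^{I,M}_{\uv,\sIR}$ is a finite linear combination of terms $\fB(\dot G_\sIR, F^{i,m}_{\uv,\sIR}, F^{j,k}_{\uv,\sIR})$ with $i+j=I$, $m+k=M+1$ and $i,j\leq i_\flat$. To bring the stochastic estimates to bear, I would use the factorization $\dot G_\sIR=K_\sIR\ast\tilde G_\sIR\ast K_\sIR$, with $\tilde G_\sIR:=\fP_\sIR^2\dot G_\sIR$ controlled uniformly in $\sIR$ by Lemma~\ref{lem:kernel_G}, and redistribute the outer $K_\sIR^{\otimes(1+M)}$ together with the two $K_\sIR$ factors freed from $\dot G_\sIR$ so that, after symmetrization of the $y$-variables between the two arms of $\fB$, every variable of each $F$-coefficient gets convolved with $K_\sIR$. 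Exercise~\ref{ex:fB1_bound} applied to $\fB(\tilde G_\sIR,\tilde F^{i,m}_{\uv,\sIR},\tilde F^{j,k}_{\uv,\sIR})$ combined with Theorem~\ref{thm:stochastic_estimates} then yields
\[
 \|K_\sIR^{\otimes(1+M)}\ast H^{I,M}_{\uv,\sIR}\|_{\cV^M}
 \lesssim \tilde R^2\,[\sIR]^{\varrho_\varepsilon(i,m)+\varrho_\varepsilon(j,k)}
 = \tilde R^2\,[\sIR]^{\varrho_\varepsilon(I,M)-\sigma},
\]
the last equality being the algebraic identity that follows directly from $m+k=M+1$ and $i+j=I$ together with the definition of $\varrho_\varepsilon$.

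The proof concludes by exponent counting. With $\beta=\varrho_\varepsilon(i_\flat+1,0)$ the difference $\varrho_\varepsilon(I,M)-\beta=-M\alpha_\varepsilon+(I-i_\flat-1)\gamma_\varepsilon$ is nonnegative for every $I\geq i_\flat+1$ and $M\geq 0$ because $\alpha_\varepsilon\leq 0$ and $\gamma_\varepsilon>0$; hence $[\sIR]^{\sigma-\beta}\|K_\sIR^{\otimes(1+M)}\ast H^{I,M}_{\uv,\sIR}\|_{\cV^M}\lesssim\tilde R^2\,[\sIR]^{-M\alpha_\varepsilon}$. The inequality $M\leq 3I-1$ (which follows from $m\leq 3i$, $k\leq 3j$, $M=m+k-1$) gives $|\lambda|^I\leq |\lambda|^{1/3}\cdot|\lambda|^{M/3}$ for $|\lambda|\leq 1$, so that the remaining $|\lambda|^{M/3}$ can be split as $|\lambda|^{k/3}|\lambda|^{(M-k)/3}$ and packaged, exactly as for $\tilde F$, into the right-hand side of~\eqref{eq:bound_H1}. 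The main obstacle is the redistribution of regularizing kernels through $\fB$ in the second step: the outer convolution $K_\sIR^{\otimes(1+M)}\ast$ must be commuted through the $\fB$-structure so that the one distinguished variable of $F^{i,m}$ contracted against $\dot G_\sIR$, the root of $F^{j,k}$, and all the output $y$-variables each end up convolved with $K_\sIR$ in the correct slot of each $\tilde F^{\cdot,\cdot}_{\uv,\sIR}$; once this bookkeeping is settled the rest is finite combinatorics and the exponent identity $\varrho_\varepsilon(i,m)+\varrho_\varepsilon(j,k)=\varrho_\varepsilon(I,M)-\sigma$.
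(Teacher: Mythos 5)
Your proposal follows the same route as the paper. The paper's own proof also introduces $\tilde F^{i,m}_{\uv,\sIR}=K_\sIR^{\otimes(1+m)}\ast F^{i,m}_{\uv,\sIR}$ and $\tilde H^{i,m}_{\uv,\sIR}=K_\sIR^{\otimes(1+m)}\ast H^{i,m}_{\uv,\sIR}$, reads off the bound on $\tilde F_{\uv,\Cdot}$ from Theorem~\ref{thm:stochastic_estimates}, and for $\tilde H_{\uv,\Cdot}$ uses the representation
\begin{equation}
 \tilde H^{i,m}_{\uv,\sIR}=\sum_{j=i-i_\flat}^{i_\flat}\sum_{k=0}^m (1+k)\,\fB(\tilde G_\sIR,\tilde F^{j,1+k}_{\uv,\sIR},\tilde F^{i-j,m-k}_{\uv,\sIR}),
 \qquad i\in\{i_\flat+1,\ldots,2i_\flat\},
\end{equation}
followed by Exercise~\ref{ex:fB1_bound} and the exponent identity $\varrho_\varepsilon(j,1+k)+\varrho_\varepsilon(i-j,m-k)=\varrho_\varepsilon(i,m)-\sigma$. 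The kernel redistribution through $\fB$ that you flag as the main remaining obstacle is exactly what the paper settles in one line: since $\fP_\sIR K_\sIR=\delta_0$ and $\dot G_\sIR=K_\sIR\ast\tilde G_\sIR\ast K_\sIR$ with $\tilde G_\sIR=\fP_\sIR^2\dot G_\sIR$, the outer $K_\sIR^{\otimes(1+m)}$ together with the two freed $K_\sIR$ factors reconstitute $\tilde F^{j,1+k}_{\uv,\sIR}$ and $\tilde F^{i-j,m-k}_{\uv,\sIR}$ in the correct slots, which is precisely the display above; once you observe that the $\fB$-contraction uses the distinguished second slot of $W$ and the root slot of $U$, the bookkeeping closes. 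Your $\lambda$- and $[\sIR]$-power counting is what the paper summarizes as "straightforward," and your checks $|\lambda|^I\leq|\lambda|^{1/3}|\lambda|^{M/3}$, $[\sIR]^{(I-i_\flat-1)\gamma_\varepsilon}\leq 1$, and $\varrho_\varepsilon(I,M)\geq\beta$ are the right ones. The proposal is correct in both content and method.
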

\begin{proof}[Proof sketch]
We have to verify the bounds~\eqref{eq:bound_F},~\eqref{eq:bound_H}. Recall that
$$
 \tilde F_{\kappa,\mu}[\varphi]= K_\mu\ast F_{\kappa,\mu}[K_\mu\ast\varphi],\qquad
 \tilde H_{\kappa,\mu}[\varphi]=
 K_\mu\ast H_{\kappa,\mu}[K_\mu\ast\varphi]$$ 
and define 
$$\tilde F^{i,m}_{\kappa,\mu}:=K_\mu^{\otimes(1+m)}\ast F^{i,m}_{\kappa,\mu},
\qquad
\tilde H^{i,m}_{\kappa,\mu}:=K_\mu^{\otimes(1+m)}\ast H^{i,m}_{\kappa,\mu}.$$ 
Then $\tilde F^{i,m}_{\kappa,\mu}$ and $\tilde H^{i,m}_{\kappa,\mu}$ are related to $\tilde F_{\kappa,\mu}$ and $\tilde H_{\kappa,\mu}$ by formulas analogous to~\eqref{eq:intro_ansatz} and~\eqref{eq:intro_ansatz_H}. By Theorem~\ref{thm:stochastic_estimates} we have
\begin{equation}
 \|\tilde F^{i,m}_{\kappa,\mu}\|_{\cV^m} \leq \tilde R\,[\mu]^{\varrho_\varepsilon(i,m)}.
\end{equation} 
Verification of the bound~\eqref{eq:bound_F} for $\tilde F_{\kappa,\Cdot}$ is straightforward. Let us prove the bound~\eqref{eq:bound_H} for $\tilde H_{\kappa,\Cdot}$. Using~\eqref{eq:H_F},~\eqref{eq:intro_ansatz_H} as well as the fact that the effective force kernels satisfy the flow equation~\eqref{eq:flow_deterministic_i_m} we obtain
\begin{equation}
 H^{i,m}_{\kappa,\mu}=\sum_{j=i-i_\flat}^{ i_\flat}\sum_{k=0}^m
 \,(1+k)\, \fB(\dot G_\mu,F^{j,1+k}_{\kappa,\mu},F^{i-j,m-k}_{\kappa,\mu})
\end{equation}
for $i\in\{i_\flat+1,\ldots,2i_\flat\}$ and $m\in\bN_0$. Since $\fP_\mu K_\mu=\delta_0$ this implies that
\begin{equation}
 \tilde H^{i,m}_{\kappa,\mu}=\sum_{j=i-i_\flat}^{ i_\flat}\sum_{k=0}^m
 \,(1+k)\, \fB(\tilde G_\mu,\tilde F^{j,1+k}_{\kappa,\mu},\tilde F^{i-j,m-k}_{\kappa,\mu}),
\end{equation}
where $\tilde G_\mu=\fP_\mu^2\dot G_\mu$. Consequently, by the bounds for $\tilde F^{i,m}_{\kappa,\mu}$, the estimate stated in Exercise~\ref{ex:fB1_bound} and the identity
\begin{equation}
 \varrho_\varepsilon(j,1+k) + \varrho_\varepsilon(i-j,m-k)
 =
 \varrho_\varepsilon(i,m)-\sigma
\end{equation} 
we have
\begin{equation}
 \|\tilde H^{i,m}_{\kappa,\mu}\|_{\cV^m} \lesssim \tilde R^2\,[\mu]^{\varrho_\varepsilon(i,m)-\sigma}.
\end{equation} 
Since
\begin{equation}
 \langle \tilde H_{\kappa,\mu}[\varphi],\psi\rangle
 :=\sum_{i=i_\flat+1}^{2i_\flat} \sum_{m=0}^{3i} \lambda^i\,\langle \tilde H^{i,m}_{\kappa,\mu},\psi\otimes\varphi^{\otimes m}\rangle
\end{equation}
we obtain the bound~\eqref{eq:bound_H} with $\beta=\varrho_\varepsilon(i_\flat+1,0)>0$. 
\end{proof}

\begin{lem}\label{lem:support}
For all $i\in\{0,\ldots,i_\flat\}$ and $m\in\bN_0$ there exists $c\in\bR_+$ such that for all $s\in\{0,1\}$, $\kappa\in(0,1]$ and $\mu\in(0,1/2]$ it holds
\begin{equation}
 \supp\,\partial_\mu^s F^{i,m}_{\kappa,\mu}\subset\{(x,y_1,\ldots,y_m)\in\bM^{1+m}\,|\,|x-y_1|\vee\ldots\vee|x-y_m|\leq c\,[\mu]\}.
\end{equation}
\end{lem}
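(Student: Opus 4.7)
The plan is to prove both cases $s\in\{0,1\}$ simultaneously by induction on the pair $(i,m)$ in the order given by the recursive algorithm~(0)--(II). The key geometric input I will use is Remark~\ref{rem:support_G}: for $\sIR\in[0,1/2]$ the support of $\dot G_\sIR$ is contained in $\{x\in\bM\,:\,|x|\leq[\sIR]\}$. Since the flow equation~\eqref{eq:flow_deterministic_i_m} expresses $\partial_\sIR F^{i,m}_{\uv,\sIR}$ as a finite sum of terms of the form $\fB(\dot G_\sIR,F^{j,1+k}_{\uv,\sIR},F^{i-j,m-k}_{\uv,\sIR})$, the induction will propagate a quantitative diameter bound through each such application of $\fB$.

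For the base cases, I will inspect the initial data $F^{i,m}_\uv$ listed in Remark~\ref{rem:force_coefficients}: they are $\xi_\uv$ (no $y$-variables), $c^{(i)}_\uv\delta_x(\rd y_1)$, and $\delta_x(\rd y_1)\delta_x(\rd y_2)\delta_x(\rd y_3)$, all supported where $y_l=x$, so the claim holds trivially with $c=0$. The row $i=0$ is entirely handled by $F^{0,0}_{\uv,\sIR}=\xi_\uv$ together with $F^{0,m}_{\uv,\sIR}=0$ for $m>0$. For the inductive step at $(i,m)$, I will assume the bound for all factors $F^{j,1+k}_{\uv,\nu}$ and $F^{i-j,m-k}_{\uv,\nu}$ appearing on the right-hand side of~\eqref{eq:flow_deterministic_i_m} with constants $c_{j,1+k},c_{i-j,m-k}$. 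Inspecting Definition~\ref{dfn:map_B}, a typical summand couples the integration variables $y,z$ via $\dot G_\nu(y-z)$, which forces $|y-z|\leq[\nu]$. The hypothesis on $F^{j,1+k}_{\uv,\nu}$ then yields $|x-y|,\,|x-y_{\pi(l)}|\leq c_{j,1+k}[\nu]$ for $l\leq k$, while the hypothesis on $F^{i-j,m-k}_{\uv,\nu}$ yields $|z-y_{\pi(l)}|\leq c_{i-j,m-k}[\nu]$ for $l>k$. A triangle inequality then gives $|x-y_{\pi(l)}|\leq(c_{j,1+k}+1+c_{i-j,m-k})[\nu]$ for $l>k$, and taking the maximum over the finitely many summands in~\eqref{eq:flow_deterministic_i_m} produces a constant $c'_{i,m}$ for which
\begin{equation}
 \supp\,\partial_\nu F^{i,m}_{\uv,\nu}\subset\{|x-y_1|\vee\ldots\vee|x-y_m|\leq c'_{i,m}[\nu]\},
\end{equation}
settling the $s=1$ statement. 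For $s=0$ I will then use
\[
F^{i,m}_{\uv,\sIR}=F^{i,m}_\uv+\int_0^\sIR\partial_\nu F^{i,m}_{\uv,\nu}\,\rd\nu
\]
together with $[\nu]\leq[\sIR]$ for $\nu\leq\sIR\leq1/2$ and the initial-data analysis to conclude that the whole support lies in $\{|x-y_l|\leq c'_{i,m}[\sIR]\}$, closing the induction with $c_{i,m}=c'_{i,m}$.

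There is no substantial obstacle in this argument; the proof is essentially bookkeeping that exploits the locality of the initial data and the short-range support of $\dot G_\nu$. The only mildly delicate check is that the recursive order makes all required predecessors available when $(i,m)$ is processed: when $j\in\{1,\ldots,i-1\}$ both factors have smaller first index; for $j=0$ one factor vanishes since $F^{0,1+k}=0$; for $j=i$ one has $F^{i-j,m-k}=F^{0,m-k}$, forcing $k=m$, in which case $F^{j,1+k}=F^{i,1+m}$ is already available because it has been constructed at the previous stage of the inner downward recursion on $m$.
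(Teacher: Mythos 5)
Your proposal is correct and follows exactly the route the paper intends: the paper only sketches the argument in the remark and delegates it to the exercise, whose hint (induction via the flow equation~\eqref{eq:flow_deterministic_i_m}, locality of the initial data $F^{i,m}_{\uv,0}=F^{i,m}_\uv$, and $\supp\,\dot G_\sIR\subset\{|x|\leq[\sIR]\}$ for $\sIR\in(0,1/2]$) is precisely your induction with the triangle-inequality propagation of the support constants through $\fB$. Your additional check that the recursive order makes all factors on the right-hand side available (in particular the $j=i$, $k=m$ term involving $F^{i,1+m}$) is a correct and welcome piece of bookkeeping.
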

\begin{rem}
To prove the lemma it is enough to use the graphical representation for $F^{i,m}_{\kappa,\mu}$ introduced in Remark~\ref{rem:explicit_coefficients} and observe that for $\mu\in(0,1/2]$ the fluctuation propagator $(G-G_\mu)(x-y)$ represented by edges of the graphs vanishes identically if $|x-y|>[\mu]$ by Remark~\ref{rem:support_G}. Since we did not introduce precise rules for drawing diagrams in the exercise below we suggest proving this result using the flow equation. Note that the above support property is not true for $\mu$ close to one.
\end{rem}

\begin{ex}[$\spadesuit$]
Prove the above lemma by induction using the flow equation~\eqref{eq:flow_deterministic_i_m}. Hint: Observe that $F^{i,m}_{\kappa,0}=F^{i,m}_\kappa$ is local, and consequently it satisfies the above support property. Note that for $\mu\in(0,1/2]$ the kernel $\dot G_\mu$ is supported in a ball of radius $[\mu]$.
\end{ex}

\begin{rem}[$\spadesuit$]
Since we would like to use Lemma~\ref{lem:support} in the proof of the stochastic estimates for the enhanced noise we will actually establish the bound stated in Theorem~\ref{thm:stochastic_estimates} only for $\mu\in(0,1/2]$. The bound for $\mu\in[1/2,1]$ can then be easily proved deterministically. Alternatively, one can set
\begin{equation}\label{eq:stopped_eff_force}
 \langle F_{\kappa,\mu}[\varphi],\psi\rangle
 :=\sum_{i=0}^{i_\flat} \sum_{m=0}^{3i} \lambda^i\,\langle F^{i,m}_{\kappa,\mu\wedge1/2},\psi\otimes\varphi^{\otimes m}\rangle.
\end{equation}
Then $H_{\kappa,\mu}=O(\lambda^{i_\flat+1})$ is true for $\mu\in(0,1/2]$, which is sufficient to show the bound~\eqref{eq:bound_H} for some $\beta>0$.
\end{rem}

\section{Cumulants of effective force kernels}\label{sec:cumulants_estimates}

We now prepare the ground for the probabilistic part of the argument. In the
previous sections, the effective force kernels were constructed by means of
deterministic flow equations. The enhanced noise is the finite family
\[
(F^{i,m}_{\kappa,\Cdot})_
{i\in\{0,\ldots,i_\flat\},\,m\in\{0,\ldots,3i\}} .
\]
To prove the stochastic estimates on this family, stated in
Theorem~\ref{thm:stochastic_estimates}, we shall not estimate moments directly.
Instead, we first estimate the joint cumulants of the effective force kernels.

The reason for passing to cumulants is that they retain only the connected part
of a correlation function. In the present setting this connectedness gives an
additional spatial localization factor, which is invisible at the level of
ordinary moments. This localization will be measured in the spaces
\(\cE^\MM\), introduced below. The corresponding norm integrates over relative
positions and is adapted to translation-invariant cumulants.

The purpose of this section is therefore preparatory. We first recall the
definition of joint cumulants and fix the notation for cumulants of random
distributions. We then introduce the cumulants
\[
E^\vI_{\kappa,\mu}
\]
of the effective force kernels, together with the bookkeeping notation for
lists of indices. After that, we define the spaces \(\cE^\MM\) in which the
cumulant estimates will be formulated, and we explain heuristically why one
expects an improvement of order \([\mu]^{d(n-1)}\) for an \(n\)-point cumulant.
Finally, we derive the flow equation for cumulants. This is the key structural
input for the next section: it shows that a scale derivative of a cumulant can
be written in terms of lower-order cumulants through two deterministic
multilinear operations, denoted by \(\fA\) and \(\fB\). The remaining technical
lemmas in this section establish the basic bounds on these operations.

\begin{dfn}\label{def:cumulants}
	Let $p \in \mathbb{N}_+$ and set $I := \{1,\ldots,p\}$. Let $(\zeta_q)_{q\in I} = (\zeta_1,\ldots,\zeta_p)$ be a list of random variables with moments of all orders. The joint cumulant of $(\zeta_q)_{q\in I}$ is defined by
	\begin{equation}
		\mathbb{E}(\zeta_1,\ldots,\zeta_p)
		\equiv
		\mathbb{E}(\zeta_q)_{q\in I}
		=
		(-\ri)^p \partial_{t_1}\cdots\partial_{t_p}
		\log \mathbb{E} \exp\!\big(\ri t_1 \zeta_1 + \cdots + \ri t_p \zeta_p\big)
		\Big|_{t_1=\cdots=t_p=0}.
	\end{equation}
	In particular, $\bE(\zeta_1,\zeta_2)=\bE(\zeta_1\zeta_2)-\bE \zeta_1\,\bE\zeta_2$.
\end{dfn}

\begin{lem}\label{lem:cumulants}
	Let $p\in\bN_+$, $I=\{1,\ldots,p\}$ and $\zeta_1,\ldots,\zeta_p,\Phi,\Psi$ be random variables. It holds 
	\begin{equation}\label{eq:expectation_cumulants}
		\bE(\zeta_1\ldots\zeta_p)
		=\sum_{r=1}^p\frac{1}{r!}
		\sum_{\substack{I_1,\ldots,I_r\subset I,\\I_1\sqcup\ldots\sqcup I_r=I\\I_1,\ldots,I_r\neq \emptyset}}
		\bE(\zeta_q)_{q\in I_1}
		\ldots
		\bE(\zeta_q)_{q\in I_r},
	\end{equation} 
	\begin{equation}\label{eq:cumulants_product}
		\bE((\zeta_q)_{q\in I},\Phi\Psi)
		=
		\bE((\zeta_q)_{q\in I},\Phi,\Psi)
		+
		\sum_{\substack{I_1,I_2\subset I\\I_1\sqcup I_2= I}}
		\bE((\zeta_q)_{q\in I_1},\Phi)
		~
		\bE((\zeta_q)_{q\in I_2},\Psi).
	\end{equation}
	Here \(I_1\sqcup\cdots\sqcup I_n=I\) means that
	\(I_1,\ldots,I_n\) are pairwise disjoint subsets of \(I\) whose union is \(I\) and we used the notation for cumulants introduced in Definition~\ref{def:cumulants}.
\end{lem}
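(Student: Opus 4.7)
The plan is to derive both identities from the cumulant generating function. Let $C(t) := \log \bE \exp(\ri \sum_{q=1}^p t_q \zeta_q)$; by definition, the joint cumulants are mixed partial derivatives $\bE(\zeta_q)_{q \in J} = (-\ri)^{|J|} \big(\prod_{q \in J} \partial_{t_q}\big) C(t) \big|_{t=0}$ for every non-empty $J \subset I$.

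To prove \eqref{eq:expectation_cumulants}, I would start from the identity $\bE \exp(\ri \sum_q t_q \zeta_q) = \exp C(t)$ and differentiate both sides once in each of $t_1, \ldots, t_p$. On the left-hand side this produces $\ri^p\, \bE(\zeta_1 \cdots \zeta_p)$ upon evaluating at $t=0$. On the right-hand side, the general Leibniz/Fa\`a di Bruno formula applied to $\exp C(t)$ gives
\[
\partial_{t_1} \cdots \partial_{t_p} \exp C(t) \;=\; \exp C(t)\, \sum_{\pi} \prod_{B \in \pi} \partial_{t_B} C(t),
\]
where $\pi$ ranges over set partitions of $\{1,\ldots,p\}$ into non-empty disjoint blocks and $\partial_{t_B} := \prod_{q \in B} \partial_{t_q}$. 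Setting $t=0$ and using $\exp C(0) = 1$ together with $\partial_{t_B} C(0) = \ri^{|B|} \bE((\zeta_q)_{q \in B})$ yields \eqref{eq:expectation_cumulants}, since $\ri^{|B_1|+\cdots+|B_r|} = \ri^p$.

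To prove \eqref{eq:cumulants_product}, I would apply \eqref{eq:expectation_cumulants} to $\bE(\zeta_1 \cdots \zeta_p \, \Phi \, \Psi)$ in two distinct ways and equate the results. Viewing $\Phi\Psi$ as a single random variable $X$ gives, after grouping each partition of $I \cup \{X\}$ according to the subset $J \subset I$ sharing the block with $X$ and re-applying \eqref{eq:expectation_cumulants} to the residual partition of $I \setminus J$,
\[
\bE(\zeta_1 \cdots \zeta_p \, \Phi\Psi) \;=\; \sum_{J \subset I} \bE\big((\zeta_q)_{q \in J}, \Phi\Psi\big)\, \bE\Big(\textstyle\prod_{q \in I \setminus J} \zeta_q\Big).
\]
Viewing $\Phi$ and $\Psi$ as two separate variables gives, after grouping each partition of $I \cup \{\Phi, \Psi\}$ according to whether $\Phi$ and $\Psi$ share a block and which subsets of $I$ accompany them,
\[
\bE(\zeta_1 \cdots \zeta_p \, \Phi\Psi) \;=\; \sum_{J \subset I} \bE\big((\zeta_q)_{q \in J}, \Phi, \Psi\big)\, \bE\Big(\textstyle\prod_{q \in I \setminus J} \zeta_q\Big)
\;+\; \sum_{\substack{J_1, J_2 \subset I \\ J_1 \cap J_2 = \emptyset}} \bE\big((\zeta_q)_{q \in J_1}, \Phi\big)\, \bE\big((\zeta_q)_{q \in J_2}, \Psi\big)\, \bE\Big(\textstyle\prod_{q \in I \setminus (J_1 \cup J_2)} \zeta_q\Big).
\]
Equating the two expressions and then inducting on $|I|$ --- with base case $I = \emptyset$ reducing to the definitional $\bE(\Phi\Psi) = \bE(\Phi,\Psi) + \bE(\Phi)\bE(\Psi)$ --- the contributions with $J \subsetneq I$ cancel by the induction hypothesis, leaving the $J = I$ contribution which is precisely \eqref{eq:cumulants_product}.

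The main technical obstacle will be the bookkeeping in the second step: correctly identifying each partition of $I \cup \{\Phi,\Psi\}$ with a triple consisting of a subset (or an ordered pair of disjoint subsets) sitting in the block(s) containing $\Phi,\Psi$, together with a partition of the residual subset of $I$, so that \eqref{eq:expectation_cumulants} can be invoked to collapse the residual partition sum into a plain expectation. Once this combinatorial rearrangement is carried out cleanly, the inductive cancellation is automatic.
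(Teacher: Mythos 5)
Your proposal is correct, but it takes a different route from the paper, which offers no argument at all and simply cites Proposition~3.2.1 of Peccati--Taqqu. Your first step (Fa\`a di Bruno applied to $\exp C(t)$, matching powers of $\ri$ block by block) is the standard derivation of the moment--cumulant relation, and your second step --- expanding $\bE(\zeta_1\cdots\zeta_p\,\Phi\Psi)$ once with $\Phi\Psi$ treated as a single variable and once with $\Phi,\Psi$ separate, grouping each partition by the block(s) meeting $\{\Phi,\Psi\}$, collapsing the residual partition sum back into $\bE\big(\prod_{q\in I\setminus J}\zeta_q\big)$, and cancelling all $J\subsetneq I$ terms by induction --- is a clean version of the classical Leonov--Shiryaev/Malyshev argument that the cited reference itself relies on; the bookkeeping you flag as the main obstacle works exactly as you describe, since the surviving $J=I$ term is multiplied by $\bE(\prod_{q\in\emptyset}\zeta_q)=1$. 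Two small remarks to make the write-up match the paper's intent: the sums in \eqref{eq:expectation_cumulants} and \eqref{eq:cumulants_product} are to be read over \emph{pairwise disjoint} $I_1,\ldots,I_r$ (i.e.\ partitions, respectively ordered partitions into two possibly empty parts), which is precisely what your derivation produces and what is used later in Lemma~\ref{lem:flow_E_general} --- the non-disjoint pairs allowed by a literal reading would already fail for $p=1$; and the formal differentiation of $\log\bE\exp(\ri\sum_q t_q\zeta_q)$ at $t=0$ presupposes finiteness of the relevant moments, an assumption already implicit in the paper's definition of joint cumulants, so it costs nothing here. What your approach buys is a self-contained, elementary proof; what the paper's citation buys is brevity and the full generality of the reference.
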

\begin{proof}
	See e.g. Proposition~3.2.1 in~\cite{peccati2011wiener}. 
\end{proof}

\begin{ex}\label{ex:gaussian_cumulant_pairings}
	Use the notation
	\begin{equation}\label{eq:covariance_edge}
		x\,\<C>\,y
		:=
		\bE\bigl(\xi_\kappa(x)\xi_\kappa(y)\bigr)
	\end{equation}
	for the covariance of the regularized noise. Consider the joint cumulant of
	\[
	X=
	\xi_\kappa(x_1)\xi_\kappa(x_2)\xi_\kappa(x_3),
	\qquad
	Y=
	\xi_\kappa(y_1)\xi_\kappa(y_2),
	\qquad
	Z=
	\xi_\kappa(z).
	\]
	By Wick's theorem, its expansion is a sum over pairings of the six noises
	located at
	\[
	x_1,x_2,x_3,y_1,y_2,z.
	\]
	Each pairing can be represented by a graph whose edges are covariance edges of
	the form~\eqref{eq:covariance_edge}. Prove that a pairing contributes to the
	cumulant only if this graph connects the three blocks
	\[
	\{x_1,x_2,x_3\},
	\qquad
	\{y_1,y_2\},
	\qquad
	\{z\}.
	\]
	Equivalently, only pairings whose induced graph on these three blocks is
	connected survive in the cumulant. Deduce that
	\begin{equation}
		\begin{aligned}
			\bE\bigl(
			\xi_\kappa(x_1)&\xi_\kappa(x_2)\xi_\kappa(x_3),
			\xi_\kappa(y_1)\xi_\kappa(y_2),
			\xi_\kappa(z)
			\bigr)
			\\
			={}&
			\sum_{\sigma\in \cP_3}
			x_{\sigma(1)}\,\<C>\,y_1
			~
			x_{\sigma(2)}\,\<C>\,y_2
			~
			x_{\sigma(3)}\,\<C>\,z
			\\
			&+
			\frac12
			\sum_{\sigma\in \cP_3}
			\sum_{\pi\in \cP_2}
			x_{\sigma(1)}\,\<C>\,x_{\sigma(2)}
			~
			x_{\sigma(3)}\,\<C>\,y_{\pi(1)}
			~
			y_{\pi(2)}\,\<C>\,z .
		\end{aligned}
	\end{equation}
\end{ex}

\begin{dfn}\label{dfn:notation_cumulants_distributions}
	Let $n \in \mathbb{N}_+$, set $I := \{1,\ldots,n\}$, and let $m_1,\ldots,m_n \in \mathbb{N}_0$. For $q \in I$, let $\zeta_q \in \sS'(\bM^{1+m_q})$ be random distributions with moments of all orders. The deterministic distribution
	\begin{equation}
		\bE(\zeta_q)_{q\in I}
		\equiv
		\bE(\zeta_1,\ldots,\zeta_n)
		\in 
		\sS'(\bM^n\times\bM^{m_1+\ldots+m_n})
	\end{equation}
	is defined by the relation
	\begin{equation}
	 \langle\bE(\zeta_1,\ldots,\zeta_n),\psi_1\otimes\ldots\otimes\psi_n\otimes\varphi_1\otimes\ldots\otimes\varphi_n\rangle
	 :=
	 \bE
	 (\langle\zeta_1,\psi_1\otimes\varphi_1\rangle,\ldots,\langle\zeta_n,\psi_n\otimes\varphi_n\rangle)
	\end{equation}
	for all $\psi_q\in \sS(\bM)$, $\varphi_q\in \sS(\bM^{m_q})$, $q\in I$.
\end{dfn}

\begin{dfn}\label{dfn:cumulants_eff_force}
	An \emph{index} is a quadruple $(i,m,s,r)$, where $i \in \{0,\ldots,i_\flat\}$, $m \in \mathbb{N}_0$, and $s,r \in \{0,1\}$. For $n \in \mathbb{N}_+$, we call
	\begin{equation}\label{eq:list_indices}
		\vI = \big((i_1,m_1,s_1,r_1),\ldots,(i_n,m_n,s_n,r_n)\big)
	\end{equation}
	a \emph{list of indices}. We define
	\[
	n(\vI) := n, 
	\qquad
	i(\vI) := i_1 + \cdots + i_n,
	\qquad
	\MM(\vI) := (m_1,\ldots,m_n),
	\]
	\[
	m(\vI) := m_1 + \cdots + m_n,
	\qquad
	s(\vI) := s_1 + \cdots + s_n,
	\qquad
	r(\vI) := r_1 + \cdots + r_n.
	\]
	The effective force kernels are multilinear functionals of the white noise and therefore admit moments of all orders. We denote their joint cumulants by
	\begin{equation}
		E^\vI_{\kappa,\mu}
		:=
		\mathbb{E}\big(
		\partial_{\mu}^{s_1}\partial_{\kappa}^{r_1}F^{i_1,m_1}_{\kappa,\mu}, \ldots,
		\partial_{\mu}^{s_n}\partial_{\kappa}^{r_n}F^{i_n,m_n}_{\kappa,\mu}
		\big)
		\in \sS'\big(\bM^{n(\vI)+m(\vI)}\big).
	\end{equation}
\end{dfn}
\begin{rem}
In order to prove the convergence of the enhanced noise as $\kappa\searrow0$ one has to study cumulants $E^\vI_{\kappa,\mu}$ with $r(\vI)\neq 0$. In what follows, for simplicity, we restrict attention to cumulants $E^\vI_{\kappa,\mu}$ with $\vI=((i_1,m_1,s_1,0),\ldots,(i_n,m_n,s_n,0))$.
\end{rem}

\begin{dfn}\label{dfn:sVMM}
Let $n\in\bN_+$, \mbox{$\MM=(m_1,\ldots,m_n)\in\bN_0^n$} and $m=m_1+\ldots+m_n$. The vector space $\cE^\MM$ consists of maps $V\in C(\bM^n\times\bM^m)$ such that
\begin{itemize} 
\item[(1)]
the function 
$$
 (x_1,\ldots,x_n)\mapsto V(x_1,\ldots,x_n;\mathrm y_1+x_1,\ldots,\mathrm y_n+x_n)
$$
is $2\pi$ periodic in all variables for every
$$
 (y_1,\ldots,y_m)=(\mathrm y_1,\ldots,\mathrm y_n)\in\bM^{m_1}\times\ldots\times\bM^{m_n}=\bM^m,
$$
where 
$$
\ry+x:=(y_1+x,\ldots,y_m+x)\in\bM^m
$$
for arbitrary $m\in\bN_0$, $x\in\bM$, $\ry=(y_1,\ldots,y_m)\in\bM^m$,
\item[(2)] it holds 
$$
V(x_1,\ldots,x_n;y_1,\ldots,y_m)=V(x_1+z,\ldots,x_n+z;y_1+z,\ldots,y_m+z)
$$
for all $x_1,\ldots,x_n,y_1,\ldots,y_m,z\in\bM$, 
\item[(3)] the norm
 \begin{equation}
 \|V\|_{\cE^\MM}
 :=
 \sup_{x_1\in\bM} \int_{\bT^{n-1}\times\bM^m}
 |V(x_1,\ldots,x_n;y_1,\ldots,y_m)|\,\rd x_2\ldots\rd x_n\,\rd y_1\ldots\rd y_m
\end{equation}
is finite.
\end{itemize}
\end{dfn}

\begin{rem}\label{rem:norms}
Let $m\in\bN_0$. Note that for $n=1$, $\MM=(m)\in\bN^n$ and $V\in\cE^\MM$ it holds $\|V\|_{\cE^\MM}=\|V\|_{\cV^m}$.
\end{rem}

\begin{rem}\label{rem:translations}
Using the condition of translational invariance stated in Item~(2) of the above definition, the expression for the norm $\|V\|_{\cE^\MM}$ can be rewritten in a more symmetric form
\begin{equation}
 \|V\|_{\cE^{\MM}}
 =
 \frac{1}{(2\pi)^{d}}\int_{\bT^n\times\bM^m}
 |V(x_1,\ldots,x_n;y_1,\ldots,y_m)|\,\rd x_1\ldots\rd x_n \rd y_1\ldots\rd y_m.
\end{equation} 
\end{rem}

\begin{dfn}\label{dfn:varrho_I}
For $\varepsilon\in[0,\infty)$ and a list of indices $\vI$ of the form~\eqref{eq:list_indices} we define 
\begin{equation}
 \varrho_\varepsilon(\vI) := \varrho_\varepsilon(i_1,m_1) +\ldots +\varrho_\varepsilon(i_n,m_n)\in\bR.
\end{equation}
We also set $\varrho(\vI):=\varrho_0(\vI)$.
\end{dfn}

\begin{rem}
Using the fact that the law of the noise~$\xi_\kappa$ is invariant under spatial translations, one proves that the same is true for the effective force kernels $F^{i,m}_{\kappa,\mu}$. Recall the regularizing kernel $\tilde K_\mu$ introduced in Definition~\ref{def:K}. Since $\tilde K_\mu\in C(\bM)\cap L^1(\bM)$ and $\partial_\kappa^r\partial_\mu^s F^{i,m}_{\kappa,\mu}\in\cV^m$ for all $\kappa,\mu\in(0,1]$, one easily shows that
$$
\tilde K_\mu^{\otimes (n+m)}\ast E^\vI_{\kappa,\mu} \in \cE^{\MM}
$$ 
for all $\kappa,\mu\in(0,1]$, where $n=n(\vI)$, $m=m(\vI)$ and $\MM=\MM(\vI)$. In Theorem~\ref{thm:cumulants} stated in the next section we prove that for an appropriate choice of the counterterms the following bound
\begin{equation}\label{eq:rem_cumulants1}
 \|\tilde K_\mu^{\otimes (n+m)}\ast E^\vI_{\kappa,\mu}\|_{\cE^{\MM}}\lesssim
 [\mu]^{\varrho_\varepsilon(\vI)-\sigma s(\vI)+d(n-1)}
\end{equation}
holds uniformly in $\kappa\in(0,1]$ and $\mu\in(0,1/2]$. In order to see that, at least at a heuristic level, the above estimate is compatible with the bound stated in Theorem~\ref{thm:stochastic_estimates} suppose that $$\bE\|\tilde K_\mu^{\otimes(1+m)}\ast \partial_\mu^s F^{i,m}_{\kappa,\mu}\|_{\cV^m}^n\lesssim [\mu]^{n(\varrho_\varepsilon(i,m)-\sigma s)}.$$ The last bound is almost what Theorem~\ref{thm:stochastic_estimates} says. Observe that this bound implies that
\begin{equation}\label{eq:rem_cumulants2}
 \|\tilde K_\mu^{\otimes (n+m)}\ast E^\vI_{\kappa,\mu}\|_{{\tilde\cV}_\rt^{\MM}}\lesssim
 \,[\mu]^{\varrho_\varepsilon(\vI)-\sigma s(\vI)},
\end{equation}
where 
\begin{equation}
 \|V\|_{{\tilde\cV}_\rt^\MM}
 :=
 \sup_{x_1,\ldots,x_n\in\bM}
 \int |V(x_1,\ldots,x_n;y_1,\ldots,y_m)|\,\rd y_1\ldots\rd y_m.
\end{equation}
Actually, the bound~\eqref{eq:rem_cumulants2} would be also true if $E^\vI_{\kappa,\mu}$ was defined to be the expected value of a product of the effective force kernels and not their joint cumulant. The presence of the extra factor $[\mu]^{d(n-1)}$ appearing in the bound~\eqref{eq:rem_cumulants1} can be understood at least for $\mu\geq\kappa$ by noting the following support property of the cumulants
\begin{multline}
 \supp\, E^\vI_{\kappa,\mu}\subset \{(x_1,\ldots,x_n;y_1,\ldots,y_m)\in\bM^{n+m}\,|\,\\
 |x_2-x_1|\vee\ldots\vee|x_n-x_1|\vee |y_1-x_1|\vee\ldots\vee|y_m-x_1|\leq c\,[\kappa\vee\mu]\}
\end{multline}
for all $\kappa,\mu\in(0,1]$ and some $c\in(0,\infty)$ depending only on $\vI$. Note that the norm $\|\Cdot\|_{\cV^\MM_\rt}$ is weaker than $\|\Cdot\|_{{\tilde\cV}_\rt^\MM}$. Because of the extra factor $[\mu]^{d(n-1)}$ the bound~\eqref{eq:rem_cumulants1} is easier to establish than the bound~\eqref{eq:rem_cumulants2}.
\end{rem}

\begin{rem}[$\spadesuit$]\label{rem:rho2}
For $\varepsilon\in(0,\infty)$ and any list of indices $\vI$ such that $m(\vI)\leq 3i(\vI)$ it holds $\varrho_\varepsilon(\vI)<\varrho(\vI)$. Moreover, \mbox{$\varrho_\varepsilon(\vI)+(n(\vI)-1)d>0$} for $\varepsilon\in(0,\varepsilon_\diamond)$ and lists of indices $\vI$ such that $\varrho(\vI)+(n(\vI)-1)d>0$.
\end{rem}

We now introduce two multilinear maps which encode the two elementary
operations on cumulants produced by differentiating the flow equation. When
$\partial_\mu$ hits one effective force kernel, the deterministic flow
equation inserts one distinguished propagator $\dot G_\mu$ and produces a
product of two effective force kernels. At the level of cumulants, this product
is expanded using~\eqref{eq:cumulants_product}. The first term on the
right-hand side of~\eqref{eq:cumulants_product} keeps all factors in a single
joint cumulant; this operation is represented by the map $\fA$. Analytically,
$\fA$ contracts this enlarged cumulant with the kernel $\dot G_\mu$, integrating
over the two newly created variables. The remaining terms in
\eqref{eq:cumulants_product} split the cumulant into a product of two
cumulants; this operation is represented by the map $\fB$. In this case the
two cumulants are joined by one $\dot G_\mu$-edge, and the symmetrization in
the definition accounts for all possible distributions of the external
variables between the two factors. Thus $\fA$ and $\fB$ provide a compact
analytic notation for the two graphical mechanisms appearing in the
differentiated cumulant expansion: contraction of one connected cumulant and
joining of two cumulants by a $\dot G_\mu$-edge. These two operations are
illustrated explicitly in Exercise~\ref{ex:cumulant_diagrams} below.

\begin{dfn}\label{dfn:maps_A_B}
Fix $n\in\bN_+$, $\hat n\in\{1,\ldots,n\}$, $m_1,\ldots,m_{n+1}\in\bN_0$. Let
\begin{equation}
\begin{gathered}
 \mathsf{m}=(m_1+m_{n+1},m_2,\ldots,m_n)\in\bN_0^n,
 \qquad
 \tilde{\mathsf{m}}=(1+m_1,m_2,\ldots,m_{n+1})\in\bN_0^{n+1},
 \\
 \hat{\mathsf{m}}=(1+m_1,m_2,\ldots,m_{\hat n})\in\bN_0^{\hat n},
 \qquad
 \check{\mathsf{m}}=(m_{\hat n+1},\ldots,m_{n+1})\in\bN_0^{n-\hat n+1}
\end{gathered} 
\end{equation}
and $\underline m=m_1+m_{n+1}$. The bilinear map $\fA\,:\,\sS(\bM) \times\cE^{\tilde{\mathsf{m}}}\to\cE^{\mathsf{m}}$ is defined by
\begin{multline}\label{eq:fA_dfn}
 \fA(G,V)(x_1,\ldots,x_n;\ry_1,\ry_{n+1},\ry_2,\ldots,\ry_n)
 \\
 :=
 \int_{\bM^2} V(x_1,\ldots,x_{n+1}; y,\ry_1,\ldots,\ry_{n+1})\,G(y-x_{n+1})\,\rd y\rd x_{n+1}.
\end{multline}
The trilinear map $\fB\,:\,\sS(\bM)\times \cE^{\hat{\mathsf{m}}}\times\cE^{\check{\mathsf{m}}}\to\cE^{\mathsf{m}}$ is defined by 
\begin{multline}\label{eq:fB_dfn}
 \fB(G,W,U)(x_1,\ldots,x_{n};\ry_1,\ry_{n+1},\ry_2,\ldots,\ry_{n})
 \\
 :=
 \frac{1}{\underline m!}\sum_{\pi\in\cP_{\underline m}} 
 \int_{\bM^2} W(x_1,\ldots,x_{\hat n};y,y_{\pi(1)},\ldots,y_{\pi(m_1)},\ry_2,\ldots,\ry_{\hat n})
 \,G(y-x_{n+1})
 \\
 \times\,
 U(x_{n+1},x_{\hat n+1},\ldots,x_{n};y_{\pi(m_1+1)},\ldots,y_{\pi(\underline m)},\ry_{\hat n+1},\ldots,\ry_{n})\,\rd y\rd x_{n+1}.
\end{multline}
In the above equations $\ry_j\in\bM^{m_j}$, $j\in\{1,\ldots,n+1\}$.
\end{dfn}

\begin{ex}\label{ex:cumulant_diagrams}
	Draw the graphs representing cumulants $E^\vI_{\kappa,\mu}$ with $i(\vI)\leq1$, $m(\vI)\leq3$, $s(\vI)\in\{0,1\}$, $r(\vI)=0$. Use the graphical representation of the effective force kernels in terms of trees introduced in Section~\ref{sec:effective_force} and the property of the joint cumulant of products of Gaussian random variables established in Exercise~\ref{ex:gaussian_cumulant_pairings}. Recall that 
	\begin{equation}
		x\,\<GB>\,y\,=(G-G_\mu)(x-y),
		\qquad
		x\,\<GR>\,y\,=\dot G_\mu(x-y),
		\qquad
		x\,\<C>\,y
		=
		\bE\bigl(\xi_\kappa(x)\xi_\kappa(y)\bigr).
	\end{equation}
	For example, let 
	\begin{equation}
		\vI = ((1,0,0,0),(0,0,0,0)).
	\end{equation}
	Then using the graphical notation introduced in Remark~\ref{rem:explicit_coefficients} and for simplicity omitting the dependence on $\kappa$ and $\mu$ of the diagrams, we obtain
	\begin{equation}
		E^\vI_{\kappa,\mu}=\bE(F^{1,0}_{\kappa,\mu},F^{0,0}_{\kappa,\mu})=\bE\big(\,\<3b>\,,\,\<0>\,\big)=3~\<cumulant1>~.
	\end{equation}
	Since
	\begin{equation}
		\begin{aligned}
			\partial_\mu F_{\kappa,\mu}^{1,0}=\partial_\mu \<3b>= -3~\<3b1>~=
			-3\,\fB(\dot G_\mu,\<2o>_{\kappa,\mu},\<0>_\kappa)
			=
			-\fB(\dot G_\mu,F^{1,1}_{\kappa,\mu},F^{0,0}_{\kappa,\mu}),
		\end{aligned}
	\end{equation} 
	we obtain
	\begin{equation}
		\begin{aligned}
			\partial_\mu E_{\kappa,\mu}^\vI
			=&
			\,3\,\partial_\mu \<cumulant1> 
			\\
			=& 
			- 6~\<cumulant12>-3~\<cumulant11> 
			\\
			=&
			-6\,\fA\Big(\dot G_\mu,~\<cumulant21>~\Big) - 3\,\fB\Big(\dot G_\mu,~\<cumulant22>~,~\<C>~\Big)
		\end{aligned}
	\end{equation}
	Let
	\begin{equation}
		\begin{gathered}
		\vK =  ((0,0,0,0),(1,1,0,0),(0,0,0,0)),
		\\
		\vL=((1,1,0,0))
		\qquad
		\vM=((0,0,0,0),(0,0,0,0)).
		\end{gathered}
	\end{equation}
	Observe that
	\begin{equation}
		\begin{aligned}
			E_{\kappa,\mu}^\vK &= \bE(F^{0,0}_{\kappa,\mu},F^{1,1}_{\kappa,\mu},F^{0,0}_{\kappa,\mu})
			=
			3\,\bE\big(\,\<0>\,,\,\<2o>\,,\,\<0>\,\big) = 6~\<cumulant21>~,
			\\
			E_{\kappa,\mu}^\vL &=\bE(F^{1,1}_{\kappa,\mu})=3\,\bE(\,\<2o>\,)= 3~\<cumulant22>~,
			\\
			E_{\kappa,\mu}^\vM &=\bE(F^{0,0}_{\kappa,\mu},F^{0,0}_{\kappa,\mu})=\bE(\,\<0>\,,\,\<0>\,)=~ \<C>~.
		\end{aligned}
	\end{equation}
	As a result,
	\begin{equation}
		\partial_\mu E_{\kappa,\mu}^\vI = 
		-\fA\big(\dot G_\mu,E_{\kappa,\mu}^\vK\big)
		- \fB\big(\dot G_\mu,E_{\kappa,\mu}^\vL,E_{\kappa,\mu}^\vM\big)\,.
	\end{equation}
\end{ex}

The following two lemmas make precise the observation, illustrated in the preceding exercise, about the scale derivative of the cumulants.

\begin{lem}\label{lem:flow_E_general}
Let $n\in\bN_+$, $i_1\in\bN_0$, $m_1,\ldots,m_n\in\bN_0$ and $I\equiv \{2,\ldots,n\}$. For any random  distributions $\zeta_q\in\sS'(\bM^{1+m_q})$, $q\in I$, the cumulant
\begin{equation}
 \bE(
 \partial_\mu  F^{i_1,m_1}_{\kappa,\mu},
 (\zeta_q)_{q\in I}) \in \sS'(\bM^{n+m_1+\ldots+m_n})
\end{equation}
is a linear combination of the expressions
\begin{equation}\label{eq:flow_E_A}
 \fA\big(\dot G_\mu,
 \bE(
  F^{j,1+k}_{\kappa,\mu},
 (\zeta_q)_{q\in I},
  F^{i_1-j,m_1-k}_{\kappa,\mu})\big)
\end{equation}
or
\begin{equation}\label{eq:flow_E_B}
 \fB\big(\dot G_\mu,
 \bE(F^{j,1+k}_{\kappa,\mu},(\zeta_q)_{q\in I_1})
 ,
 \bE(F^{i_1-j,m_1-k}_{\kappa,\mu},(\zeta_q)_{q\in I_2})\big),
\end{equation}
where $j\in\{1,\ldots,i_1\}$, $k\in\{0,\ldots,m_1\}$ and the subsets $I_1,I_2\subset I$ are such that $I_1\cup I_2= I$ and $I_1\cap I_2=\emptyset$. The coefficients of the above linear combination do not depend on $\kappa,\mu\in(0,1]$. We used the notation introduced in Definition~\ref{dfn:notation_cumulants_distributions}.
\end{lem}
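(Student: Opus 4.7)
My plan starts from the deterministic flow equation~\eqref{eq:flow_deterministic_i_m}, which expresses $\partial_\sIR F^{i_1,m_1}_{\uv,\sIR}$ as a finite $\uv,\sIR$-independent linear combination of terms of the form $\fB(\dot G_\sIR, F^{j,1+k}_{\uv,\sIR}, F^{i_1-j,m_1-k}_{\uv,\sIR})$ with $j\in\{0,\ldots,i_1\}$ and $k\in\{0,\ldots,m_1\}$. Since the joint cumulant is multilinear in each of its slots, the cumulant in question reduces to a linear combination with the same coefficients of
\[
\bE\bigl(\fB(\dot G_\sIR, F^{j,1+k}_{\uv,\sIR}, F^{i_1-j,m_1-k}_{\uv,\sIR}),\, (\zeta_q)_{q\in I}\bigr),
\]
so it suffices to analyse a single such expression.

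The key observation is that the map $\fB$ of~\eqref{eq:fB1_dfn} is bilinear in its two random arguments: evaluated against test functions, it produces (up to symmetrisation over permutations of the $y$-variables) a deterministic linear combination of products of the form $\langle F^{j,1+k}_{\uv,\sIR}, \bullet\rangle\,\langle F^{i_1-j,m_1-k}_{\uv,\sIR}, \bullet\rangle$, in which the two factors are coupled by an integration in $y,z$ against $\dot G_\sIR(y-z)$ contracting one $y$-variable of the first coefficient with the base point of the second. I then apply identity~\eqref{eq:cumulants_product} of Lemma~\ref{lem:cumulants} to each such product of two random scalars; this yields two classes of contributions.

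The first class is the joint cumulant $\bE\bigl(F^{j,1+k}_{\uv,\sIR}, (\zeta_q)_{q\in I}, F^{i_1-j,m_1-k}_{\uv,\sIR}\bigr)$, whose first and $(n+1)$-th slots (the two force-coefficient slots) are contracted via $\dot G_\sIR$ exactly as prescribed by Definition~\ref{dfn:maps_A_B} of $\fA$; this gives the terms of the form~\eqref{eq:flow_E_A}. The second class is a sum over disjoint splittings $I = I_1 \sqcup I_2$ of products of two cumulants $\bE\bigl(F^{j,1+k}_{\uv,\sIR}, (\zeta_q)_{q\in I_1}\bigr)$ and $\bE\bigl(F^{i_1-j,m_1-k}_{\uv,\sIR}, (\zeta_q)_{q\in I_2}\bigr)$, again glued by the $y,z$-integration against $\dot G_\sIR$. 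This is exactly the trilinear map $\fB$ of Definition~\ref{dfn:maps_A_B} with these two cumulants plugged into its two deterministic slots, yielding the terms of the form~\eqref{eq:flow_E_B}.

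The main obstacle I anticipate is purely combinatorial bookkeeping: one must check that the symmetrisation over permutations of the $y$-variables hard-coded into $\fB$ of~\eqref{eq:fB1_dfn}, together with the freedom to permute slots inside a joint cumulant, reorganises into the canonical slot orderings prescribed by $\fA$ and $\fB$ of Definition~\ref{dfn:maps_A_B}. Since the lemma only asserts existence of a linear combination with $\uv,\sIR$-independent coefficients, no explicit numerical prefactors need to be tracked, which lightens this combinatorics considerably.
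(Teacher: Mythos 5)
Your proposal is correct and takes essentially the same route as the paper, whose proof is exactly the combination of the flow equation~\eqref{eq:flow_deterministic_i_m} with the product--cumulant identity~\eqref{eq:cumulants_product}. Your write-up merely spells out the multilinearity of the cumulant and the identification of the resulting terms with the maps $\fA$ and $\fB$ of Definition~\ref{dfn:maps_A_B}, which the paper leaves implicit.
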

\begin{proof}
The statement follows immediately from the flow equation~\eqref{eq:flow_deterministic_i_m} and~\eqref{eq:cumulants_product}.
\end{proof}

\begin{lem}\label{lem:flow_E_form_bound}
Let $\vJ\equiv (\vJ_1,\ldots,\vJ_n)=((i_1,m_1,s_1,0),\ldots,(i_n,m_n,s_n,0))$ be a list of indices such that $s_1=1$.
\begin{enumerate}
\item[(A)]
The distribution $E^\vJ_{\kappa,\mu}$ can be expressed as a linear combination of distributions of the form
\begin{equation}
 \fA\big(\dot G_\mu,E^{\vK}_{\kappa,\mu}\big)
 \qquad
 \textrm{or}
 \qquad
 \fB\big(\dot G_\mu,
 E^{\vL}_{\kappa,\mu},
 E^{\vM}_{\kappa,\mu}
 \big),
\end{equation}
where the lists of indices $\vK$, $\vL$, $\vM$ satisfy the following conditions
\begin{equation} 
\begin{split}
&n(\vK)=n(\vJ)+1,\\
&i(\vK)=i(\vJ),\\
&m(\vK)=m(\vJ)+1,\\
&s(\vK)=s(\vJ)-1,\\
&\varrho_\varepsilon(\vJ) -\sigma = \varrho_\varepsilon(\vK),
\end{split}
\qquad~\textrm{or}\qquad\quad
\begin{split}
&n(\vL)+n(\vM)=n(\vJ)+1,\\
&i(\vL)+i(\vM)=i(\vJ),\\
&m(\vL)+m(\vM)=m(\vJ)+1,\\ 
&s(\vL)+s(\vM)=s(\vJ)-1,\\
&\varrho_\varepsilon(\vJ) -\sigma = \varrho_\varepsilon(\vL)+\varrho_\varepsilon(\vM).
\end{split}
\end{equation}

\item[(B)] Suppose that the bound
\begin{equation}
 \|\tilde K^{\otimes(n(\vI)+m(\vI))}_\mu\ast E^\vI_{\kappa,\mu}\|_{\cE^{\MM(\vI)}}
 \lesssim 
 [\mu]^{\varrho_\varepsilon(\vI)-\sigma s(\vI)+(n(\vI)-1)d}
\end{equation}
holds uniformly in $\kappa\in(0,1]$, $\mu\in(0,1/2]$ for all lists of indices $\vI$ such that $i(\vI)<i(\vJ)$, or $i(\vI)=i(\vJ)$ and $m(\vI)>m(\vJ)$. Then the above bound holds uniformly in $\kappa\in(0,1]$, $\mu\in(0,1/2]$ for $\vI=\vJ$.
\end{enumerate}
\end{lem}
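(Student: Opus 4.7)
The plan is to combine the flow-equation description of the joint cumulants (Lemma~\ref{lem:flow_E_general}) with multilinear estimates for $\fA$ and $\fB$ on the Banach spaces $\cE^\MM$.

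For part~(A), I apply Lemma~\ref{lem:flow_E_general} directly. Writing
\begin{equation}
E^\vJ_{\uv,\sIR}=\bE\bigl(\partial_\sIR F^{i_1,m_1}_{\uv,\sIR},\,\partial_\sIR^{s_2}F^{i_2,m_2}_{\uv,\sIR},\ldots,\partial_\sIR^{s_n}F^{i_n,m_n}_{\uv,\sIR}\bigr)
\end{equation}
and identifying $\zeta_q=\partial_\sIR^{s_q}F^{i_q,m_q}_{\uv,\sIR}$ for $q=2,\ldots,n$, the lemma expresses $E^\vJ_{\uv,\sIR}$ as a linear combination of terms of the form $\fA(\dot G_\sIR,E^\vK_{\uv,\sIR})$ and $\fB(\dot G_\sIR,E^\vL_{\uv,\sIR},E^\vM_{\uv,\sIR})$, where the leading entry $(i_1,m_1,1,0)$ of $\vJ$ is replaced by the pair $(j,1+k,0,0),(i_1-j,m_1-k,0,0)$ (kept together inside $\vK$, split between $\vL$ and $\vM$ together with a partition of the remaining indices). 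The relations on $\rn,\rri,\rrm,\rs$ follow by direct bookkeeping, and the $\varrho_\varepsilon$ identity reduces to $\varrho_\varepsilon(j,1+k)+\varrho_\varepsilon(i_1-j,m_1-k)=\varrho_\varepsilon(i_1,m_1)-\sigma$, which is immediate from the definition of $\varrho_\varepsilon$.

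For part~(B), I first verify that each $\vK$ (resp.\ each pair $\vL,\vM$) produced in~(A) satisfies the inductive hypothesis. For $\vK$ this is immediate since $\rri(\vK)=\rri(\vJ)$ and $\rrm(\vK)=\rrm(\vJ)+1$. For $(\vL,\vM)$, using $j\ge 1$ in Lemma~\ref{lem:flow_E_general} and the fact that $F^{0,m}=0$ unless $m=0$, a short case analysis shows that either both of $\rri(\vL),\rri(\vM)$ are strictly less than $\rri(\vJ)$, or every entry of $\vM$ has first component $0$, in which case $\rri(\vL)=\rri(\vJ)$ while $\rrm(\vL)=\rrm(\vJ)+1>\rrm(\vJ)$. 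Next, I apply $\tilde K_\sIR^{\otimes(\rn(\vJ)+\rrm(\vJ))}\ast$ to the identity from~(A). Using $\fP_\sIR K_\sIR=\delta_0$ and $\tilde G_\sIR=\fP_\sIR^2\dot G_\sIR$ together with $K_\sIR=\tilde K_\sIR^{\ast 3}$, one obtains $\dot G_\sIR=\tilde K_\sIR^{\ast 3}\ast\tilde G_\sIR\ast\tilde K_\sIR^{\ast 3}$, and moving these $\tilde K_\sIR$ factors into the arguments of $\fA$ and $\fB$ rewrites each summand as, e.g., $\fA(\tilde G_\sIR,\tilde K_\sIR^{\otimes(\rn(\vK)+\rrm(\vK))}\ast E^\vK_{\uv,\sIR})$ up to additional $\tilde K_\sIR$-smoothings that only improve the bound (each has $L^1$-norm~$1$).

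The proof is completed by invoking multilinear estimates on $\cE^\MM$ analogous to Exercise~\ref{ex:fB1_bound}:
\begin{equation}
\|\fA(G,V)\|_{\cE^\mathsf{m}}\lesssim\|G\|_\cK\,\|V\|_{\cE^{\tilde\mathsf{m}}},
\qquad
\|\fB(G,W,U)\|_{\cE^\mathsf{m}}\lesssim\|G\|_\cK\,\|W\|_{\cE^{\hat\mathsf{m}}}\,\|U\|_{\cE^{\check\mathsf{m}}},
\end{equation}
together with $\sup_\sIR\|\tilde G_\sIR\|_\cK\lesssim 1$ (Lemma~\ref{lem:kernel_G}) and the inductive bounds for $E^\vK,E^\vL,E^\vM$. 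A direct power-counting, using the $\varrho_\varepsilon$ identity from~(A) and the arithmetic of $\rn,\rs$, then yields $\|\tilde K_\sIR^{\otimes(\rn(\vJ)+\rrm(\vJ))}\ast E^\vJ_{\uv,\sIR}\|_{\cE^{\MM(\vJ)}}\lesssim[\sIR]^{\varrho_\varepsilon(\vJ)-\sigma\rs(\vJ)+(\rn(\vJ)-1)d}$ as claimed. The \emph{main obstacle} lies in establishing the multilinear estimates on $\cE^\MM$ with the correct scaling: the $x_{n+1}$-integration in the definition of $\fA$ runs over all of $\bM$ while the norm accounts for integration only over $\bT$, and reconciling this requires carefully combining the translation-invariance and periodicity of elements of $\cE^{\tilde\mathsf{m}}$ with the support property $\supp\dot G_\sIR\subset\{|x|\le[\sIR]\}$ from Remark~\ref{rem:support_G} so that the unbounded integration effectively reduces to a single fundamental domain of the torus.
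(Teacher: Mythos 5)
Part~(A) of your proof matches the paper's: apply Lemma~\ref{lem:flow_E_general}, read off the lists $\vK$, $\vL$, $\vM$, and verify the arithmetic of $\rn,\rri,\rrm,\rs,\varrho_\varepsilon$. (Your identity $\varrho_\varepsilon(j,1+k)+\varrho_\varepsilon(i_1-j,m_1-k)=\varrho_\varepsilon(i_1,m_1)-\sigma$ is correct, and so is the explicit check that each $\vK$ and each admissible pair $(\vL,\vM)$ falls under the inductive hypothesis, which the paper omits.)

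The gap is in Part~(B), precisely at the place you flag as the ``main obstacle''. Your proposed bound $\|\fA(G,V)\|_{\cE^\MM}\lesssim\|G\|_\cK\,\|V\|_{\cE^{\tilde\MM}}$ is not the right one and, in fact, is false. Unlike $\fB$, where both entries carry a distinguished vertex and the $\|G\|_{L^1}$ estimate (analogous to Exercise~\ref{ex:fB1_bound}) goes through, in $\fA$ the new internal vertex $x_{n+1}$ is integrated over all of $\bM$, while $\|V\|_{\cE^{\tilde\MM}}$ only controls the integral of $x_{n+1}$ over a fundamental domain $\bT$. Your intended remedy --- fold the $\bM$-integral down to $\bT$ using periodicity and the compact support of $\dot G_\sIR$ --- is the right move, but it does not produce $\|G\|_{L^1}$: after reindexing $x_{n+1}=\tilde x_{n+1}+2\pi k$ one obtains the periodization $\fT|G|$ paired against the $\bT$-integral of $V$, and extracting the kernel pointwise gives the $L^\infty$ norm, not the $L^1$ norm. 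This is exactly Lemma~\ref{lem:fA_fB_bounds}, with $\|\fA(G,V)\|_{\cE^\MM}\le\|\fT|G|\|_{L^\infty(\bT)}\,\|V\|_{\cE^{\tilde\MM}}$, which for $G=\tilde\fP_\sIR^2\dot G_\sIR$ yields the factor $[\sIR]^{-d}$, not $O(1)$.

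This $[\sIR]^{-d}$ is not a technical nuisance to be absorbed; it is load-bearing. From the inductive bound for $E^\vK$, using $\rn(\vK)=\rn(\vJ)+1$, $\rs(\vK)=\rs(\vJ)-1$ and $\varrho_\varepsilon(\vK)=\varrho_\varepsilon(\vJ)-\sigma$, one has
\begin{equation}
\varrho_\varepsilon(\vK)-\sigma\rs(\vK)+(\rn(\vK)-1)\rdim
=\varrho_\varepsilon(\vJ)-\sigma\rs(\vJ)+\rn(\vJ)\rdim,
\end{equation}
and it is precisely the extra $[\sIR]^{-\rdim}$ from the $\fA$ estimate that brings this down to the claimed exponent $\varrho_\varepsilon(\vJ)-\sigma\rs(\vJ)+(\rn(\vJ)-1)\rdim$. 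Your $\|G\|_\cK$ version would give a bound that is $[\sIR]^{\rdim}$ stronger than the statement --- a sign that something is off, since the $\fA$ and $\fB$ contributions should match the same exponent. The rest of the power counting, the smoothing-kernel commutation with $\fA,\fB$ (Remark~\ref{lem:fA_fB_Ks}), and the $\fB$ estimate are fine and agree with the paper's route, which proves Part~(B) directly from Part~(A) and that remark.
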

\begin{rem}
Let $\vJ\equiv (\vJ_1,\ldots,\vJ_n)$ be a list of indices. For a permutation $\pi\in\cP_n$ we set $\pi(\vJ):=(\vJ_{\pi(1)},\ldots,\vJ_{\pi(n)})$. By Remark~\ref{rem:translations} it holds 
$$
\|\tilde K_\mu^{\otimes(n+m(\vJ))}\ast E^\vJ_{\kappa,\mu}\|_{\cE^{\MM(\vJ)}}
=
\|\tilde K_\mu^{\otimes(n+m(\vJ))}\ast E^{\pi(\vJ)}_{\kappa,\mu}\|_{\cE^{\MM(\pi(\vJ))}}.$$ 
Hence, the above lemma is true for all lists of indices $\vJ$ such that $s(\vJ)\neq 0$.
\end{rem}
\begin{proof}[Proof $\mathrm{(}\spadesuit\mathrm{)}$.]
Part~(A) of the lemma follows immediately from Lemma~\ref{lem:flow_E_general} applied with
\begin{equation}
 \zeta_q\equiv \partial_\mu^{s_q}F^{i_q,m_q}_{\kappa,\mu}, 
 \qquad q\in\{2,\ldots,n\}.
\end{equation}
It holds
\begin{equation}
\begin{gathered}
 \vK=((j,k+1,0,0),\,\vJ_2,\ldots,\vJ_n,\,(i_1-j,m_1-k,0,0)),
 \\
 \vL= (j,k+1,0,0)\sqcup (\vJ_q)_{q\in I_1},
 \qquad
 \vM =(i_1-j,m_1-k,0,0) \sqcup (\vJ_q)_{q\in I_2},
\end{gathered} 
\end{equation}
where $\sqcup$ denotes the concatenation of lists, $I_1\cup I_2=I=\{2,\ldots,n\}$, $I_1\cap I_2=\emptyset$ and $j\in\{1,\ldots,i_1\}$, $k\in\{0,\ldots,m_1\}$ coincide with the respective objects in~\eqref{eq:flow_E_A} and~\eqref{eq:flow_E_B}. This together with Definition~\ref{dfn:varrho_I} implies that the lists $\vK$, $\vL$, $\vM$ satisfy the conditions stated in Part~(A). To prove Part~(B) we use Part~(A) and Remark~\ref{lem:fA_fB_Ks}.
\end{proof}

In the remaining part of this section, we collect the technical estimates used
in the proof of the preceding lemma. They serve two purposes: first, to control
the multilinear maps $\fA$ and $\fB$ in the norms $\cE^\MM$, and second, to
record the elementary scale bounds on the kernels $\tilde K_\mu$ and their
periodizations. Since the kernels are defined on the covering space $\bM$ while
the spatial variables are periodic, we begin by recalling the periodization
operation.

\begin{dfn}[$\spadesuit$]\label{dfn:periodization}
Recall $\bT:=\bM/(2\pi\bZ)^d$. For $K\in L^1(\bM)$ we define $\fT K\in L^1(\bT)$ by
\begin{equation}
 \fT K(x):= \sum_{y\in(2\pi\bZ)^d} K(x+y).
\end{equation}
\end{dfn}

\begin{lem}[$\spadesuit$]\label{lem:fA_fB_bounds}
The maps $\fA:\sS(\bM) \times\cE^{\tilde\MM}\to\cE^\MM$,
\mbox{$\fB:\sS(\bM)\times \cE^{\hat\MM}\times\cE^{\check\MM}\to\cE^\MM$} are well defined. It holds
\begin{equation}\label{eq:fA_ieq}
 \|\fA(G,V)\|_{\cE^\MM}
 \leq
 \|\fT |G|\|_{L^\infty(\bM)}\,
 \|V\|_{\cE^{\tilde\MM}},
\end{equation}
\begin{equation}\label{eq:fB_ieq}
 \|\fB(G,W,U)\|_{\cE^\MM}
 \leq
 \|G\|_{L^1(\bM)}\,
 \|W\|_{\cE^{\hat\MM}}\, 
 \|U\|_{\cE^{\check\MM}}.
\end{equation}
\end{lem}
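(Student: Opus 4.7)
The plan is to verify first that $\fA(G,V)$ and $\fB(G,W,U)$ genuinely lie in $\cE^\MM$, and then to establish the two norm inequalities by Fubini, translation invariance, and, for $\fA$, a periodization trick. Well-definedness reduces to three routine checks: continuity, which follows from the decay of the Schwartz kernel $G$ together with the finiteness of $\|V\|_{\cE^{\tilde\MM}}$ (respectively $\|W\|_{\cE^{\hat\MM}}$, $\|U\|_{\cE^{\check\MM}}$); joint translation invariance under a simultaneous shift of all external $x$'s and $y$'s, which follows by shifting the dummy variables $x_{n+1}\to x_{n+1}+z$, $y\to y+z$ and invoking translation invariance of the inputs; and the mixed periodicity of Def.~\ref{dfn:sVMM}(1), which is verified analogously after the compatible shift of the $y$-variables attached to the external $x_j$ in question. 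Once these hold, the quantitative norm bounds force $\fA(G,V),\fB(G,W,U)\in\cE^\MM$.

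For the $\fB$-bound I would work in the symmetric form of the norm from Remark~\ref{rem:translations}, apply the triangle inequality, and collapse the symmetrization sum over $\pi\in\cP_{\underline m}$ by relabeling the integration variables in $(\ry_1,\ry_{n+1})\in\bM^{\underline m}$: every term in the sum gives the same contribution, so $\frac{1}{\underline m!}\sum_\pi$ reduces to a single summand. This leaves the task of bounding
\begin{equation*}
\frac{1}{(2\pi)^\rdim}\int_{\bT^n\times\bM^{m+2}} |W|\,|G(y-x_{n+1})|\,|U|\,\rd y\,\rd x_{n+1}\,\rd x_1\cdots\rd x_n\,\rd\ry_1\,\rd\ry_{n+1}\,\rd\ry_2\cdots\rd\ry_n,
\end{equation*}
in which $W$ and $U$ are coupled only through $G(y-x_{n+1})$. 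I would integrate the $U$-block first: since the norm on $\cE^{\check\MM}$ is a supremum over the first $x$-argument, the integral of $|U(x_{n+1},x_{\hat n+1},\ldots,x_n;\,\cdot\,)|$ over $\bT^{n-\hat n}\times\bM^{\check m}$ is bounded by $\|U\|_{\cE^{\check\MM}}$ uniformly in $x_{n+1}\in\bM$. Next, $\int_\bM |G(y-x_{n+1})|\,\rd x_{n+1}=\|G\|_{L^1(\bM)}$ is independent of $y$, and what remains is exactly $\|W\|_{\cE^{\hat\MM}}$.

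For $\fA$ the subtlety is that there is no second function available to absorb the $\bM$-integration in $x_{n+1}$. My plan is a periodization: decompose $\bM=\bigsqcup_{k\in(2\pi\bZ)^\rdim}(\bT+k)$, substitute $x_{n+1}=x'_{n+1}+k$ with $x'_{n+1}\in\bT$, and simultaneously shift $\ry_{n+1}\to\ry_{n+1}+k$, a measure-preserving change of the remaining $\bM^{m_{n+1}}$ integration. By Def.~\ref{dfn:sVMM}(1), a simultaneous $2\pi$-shift of $x_{n+1}$ and of the $y$-variables attached to it---namely $\ry_{n+1}$, but \emph{not} the special argument $y$, which is attached to $x_1$---leaves $V$ invariant, so the $V$-factor becomes independent of $k$. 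The $G$-factor becomes $|G(y-x'_{n+1}-k)|$; summing over $k\in(2\pi\bZ)^\rdim$ produces the periodization $\fT|G|(y-x'_{n+1})$, which is bounded by $\|\fT|G|\|_{L^\infty(\bT)}$ and can be pulled out of the integral. The residual integral is then $\|V\|_{\cE^{\tilde\MM}}$ in the symmetric form, yielding the claimed bound.

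The step I expect to be the main obstacle is carrying out the periodization for $\fA$ correctly. One must identify exactly which $y$-variables are to be co-shifted with $x_{n+1}$ (only $\ry_{n+1}$, not $y$ or the other $\ry_j$'s) in order to invoke the mixed periodicity, and check that this compensating shift is compatible with Lebesgue measure on $\bM^{m_{n+1}}$. The asymmetry between the two final bounds---$\|G\|_{L^1(\bM)}$ for $\fB$ versus $\|\fT|G|\|_{L^\infty(\bT)}$ for $\fA$---reflects precisely this difference: in $\fB$ the $\bM$-integration in $x_{n+1}$ is absorbed into $U$'s own $\bM^m$-integration in its $y$-variables, whereas for $\fA$ one must instead fold $\bM$ onto $\bT$, trading an $L^1$-bound on $G$ for an $L^\infty$-bound on its periodization.
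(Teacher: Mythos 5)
Your proof is correct, and since the paper itself only cites~\cite[Lemma~14.10]{Du22} rather than spelling out an argument, your derivation supplies the missing details in what is the natural way: Fubini plus translation invariance, with the unfolding $\bM=\bigsqcup_{k\in(2\pi\bZ)^\rdim}(\bT+k)$ handling the one $\bM$-integration in $\fA$ that cannot be absorbed into a second input.

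Two small remarks. First, you have implicitly corrected a typo in the displayed inequality~\eqref{eq:fB_ieq}: since $W\in\cE^{\hat\MM}$ and $U\in\cE^{\check\MM}$, the bound must read $\|W\|_{\cE^{\hat\MM}}\,\|U\|_{\cE^{\check\MM}}$, which is exactly what your computation produces (and the two index lists $\hat\MM\in\bN_0^{\hat n}$, $\check\MM\in\bN_0^{n-\hat n+1}$ generally have different lengths, so the expression as printed is not even well formed). Second, in the well-posedness discussion it is worth being explicit about the case $j=1$ of the mixed periodicity check for $\fA$: the output attaches \emph{both} $\ry_1$ and $\ry_{n+1}$ to the first external argument $x_1$, so after shifting $x_1\mapsto x_1+2\pi$, $\ry_1\mapsto\ry_1+2\pi$, $\ry_{n+1}\mapsto\ry_{n+1}+2\pi$ one must also substitute $y\mapsto y+2\pi$ and $x_{n+1}\mapsto x_{n+1}+2\pi$ inside the integral (which preserves $G(y-x_{n+1})$) and then invoke the periodicity of $V$ in $x_1$ and $x_{n+1}$ simultaneously — the compensating internal shifts are not the same ones you use in the translation-invariance check, where everything moves by the same $z$. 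This is precisely the "compatible shift" you gestured at, so the idea is right; it just deserves one more sentence.
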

\begin{proof}[Proof sketch]
	We begin with the proof of the second bound. Observe that
	\begin{multline}
		\|\fB(G,W,U)\|_{\cE^{\mathsf{m}}}
		\leq
		\sup_{x_1\in\bT}\int_{\bT^{n-1}\times\bM^{m+2}} |W(x_1,\ldots,x_{\hat n};y,\ry_1,\ldots,\ry_{\hat n})|
		\,|G(y-z)|\,
		\\\times
		|U(z,x_{\hat n+1},\ldots,x_{n};\ry_{n+1},\ry_{\hat n+1},\ldots,\ry_{n})|\,\rd x_2\ldots\rd x_{n}\rd z\rd y\rd\ry_1\ldots\rd\ry_{n+1}.
	\end{multline}
	Consequently,
	\begin{multline}
		\|\fB(G,W,U)\|_{\cE^{\mathsf{m}}}
		\\
		\leq
		\sup_{x_1\in\bT}\int_{\bT^{\hat n-1}\times\bM^{\hat m}} |W(x_1,\ldots,x_{\hat n};y,\ry_1,\ldots,\ry_{\hat n})|
		\,\rd x_2\ldots\rd x_{\hat n}\rd y\rd\ry_1\ldots\rd\ry_{\hat n}
		\\\times
		\sup_{y\in\bM}\int_\bM |G(y-z)|\,\rd z
		~\|U\|_{\cE^{\check{\mathsf{m}}}} 
		\leq
		\|G\|_{L^1(\bM)}\,
		\|W\|_{\cE^{\hat\MM}}\, 
		\|U\|_{\cE^{\check\MM}},
	\end{multline}
	where $\hat m:=1+m_1+\ldots+m_{\hat n}$.
	
	We now turn to the first bound. For simplicity, we present the argument in the infinite-volume setting; the periodic case requires only minor modifications (see~\cite[Lemma~14.10]{Du22}). Under the simplifying assumption \(\bT = \bM\), we have
	\begin{multline}
		\|\fA(G,V)\|_{\cE^{\mathsf{m}}}
		\leq 
		\sup_{x_1\in\bT}\int_{\bM^{n+m+1}} 
		|G(y-x_{n+1})|
		\\
		\times 
		|V(x_1,\ldots,x_{n+1}; y,\ry_1,\ldots,\ry_{n+1})|\,\rd y\rd x_{n+1}\rd x_2\ldots\rd x_n\rd\ry_1\ldots\rd\ry_{n+1}.
	\end{multline}
	It follows that
	\begin{equation}
		\|\fA(G,V)\|_{\cE^{\mathsf{m}}}\leq
		\|G\|_{L^\infty(\bM)}\, \|V\|_{\cE^{\tilde{\mathsf{m}}}},
	\end{equation}
	which establishes the infinite-volume analogue of the first bound.
\end{proof}
\begin{rem}\label{lem:fA_fB_Ks}
	We now apply Lemma~\ref{lem:fA_fB_bounds} to the particular kernel
	$G=\dot G_\mu$ appearing in the flow equation. The following observation shows
	how the smoothing by $\tilde K_\mu$ can be moved through the maps $\fA$ and
	$\fB$, at the cost of replacing $\dot G_\mu$ by $\tilde \fP^2_\mu \dot G_\mu$.
Note that Lemmas~\ref{lem:kernel_G} and~\ref{lem:kernel_simple_fact}~(D) imply that $\|\tilde\fP_\mu^2\dot G_\mu\|_{L^1(\bM)} \lesssim 1$ and
\begin{equation}
 \|\fT |\tilde \fP^2_\mu \dot G_\mu|\|_{L^\infty(\bM)}\leq \|\fT \tilde K_\mu\|_{L^\infty(\bM)} \,\|\tilde \fP^3_\mu \dot G_\mu\|_{L^1(\bM)}\lesssim [\mu]^{-d}\,\|\tilde \fP^3_\mu \dot G_\mu\|_{L^1(\bM)},
\end{equation}
uniformly in $\mu\in(0,1]$. Moreover, using the fact that $\tilde \fP_\mu \tilde K_\mu=\delta_0$ one shows that for all $\mu\in(0,1]$ it holds
\begin{equation}
 \tilde K_\mu^{\otimes(n+m)}\ast\fA(G,V)=
 \fA\big(\tilde \fP^2_\mu G,
 \tilde K_\mu^{\otimes (n+m+2)}\ast V\big)
\end{equation}
and
\begin{equation}
 \tilde K_\mu^{\otimes(n+m)}\ast\fB(G,W,U)=
 \fB\big(\tilde \fP^2_\mu G,
 \tilde K_\mu^{\otimes(\hat n+\hat m+1)}\ast W,
 \tilde K_\mu^{\otimes(n-\hat n+m-\hat m+1)}\ast U\big),
\end{equation}
where \mbox{$m=m_1+\ldots+m_{n+1}$} and \mbox{$\hat m=m_1+\ldots+m_{\hat n}$}. Consequently, by the above lemma we have
\begin{equation}
 \|\tilde K_\mu^{\otimes(n+m)}\ast\fA(\dot G_\mu,V)\|_{\cE^\MM}
 \lesssim
 [\mu]^{-d}\,
 \|\tilde K_\mu^{\otimes(n+m+2)}\ast V\|_{\cE^{\tilde\MM}}
\end{equation}
and
\begin{equation}
 \|\tilde K_\mu^{\otimes(n+m)}\ast \fB(\dot G_\mu,W,U)\|_{\cE^\MM}
 \lesssim 
 \|\tilde K_\mu^{\otimes(\hat n+\hat m+1)}\ast W\|_{\cE^{\hat\MM}}\, 
 \|\tilde K_\mu^{\otimes(n-\hat n+m-\hat m+1)}\ast U\|_{\cE^{\check\MM}}
\end{equation}
uniformly in $\mu\in(0,1]$ and $V\in\cE^{\tilde\MM}$,
$W\in\cE^{\hat\MM}$, $U\in\cE^{\check\MM}$.
\end{rem}

We finish with a lemma stating standard auxiliary estimates on the smoothing kernels~$\tilde K_\mu$, whose proof is left as an exercise.
\begin{lem}[$\spadesuit$]\label{lem:kernel_simple_fact}
Let $a\in\bN_0^d$ and $p\in[1,\infty]$. The following is true:
\begin{enumerate}
\item[(A)]
If $|a|\leq d$, then $\|\partial^a \tilde K_\mu\|_\cK
 \lesssim [\mu]^{-|a|}$ uniformly in $\mu\in(0,1]$. 
\item[(B)]
It holds $\|\tilde\fP_\mu\partial_\mu \tilde K_\mu\|_{\cK} \lesssim [\mu]^{-\sigma}$ uniformly in $\mu\in(0,1]$.
\item[(C)] 
$\|\tilde K_\mu\|_{L^p(\bM)}\lesssim [\mu]^{-d (p-1)/p}$ uniformly in $\mu\in(0,1]$.
\item[(D)] 
$\|\fT \tilde K_\mu\|_{L^p(\bT)}\lesssim [\mu]^{-d (p-1)/p}$ uniformly in $\mu\in(0,1]$.
\end{enumerate}
\end{lem}
\begin{ex}[$\spadesuit$]
	Prove the preceding lemma. For Item~{\rm(D)}, identify $\bT$ with
	\mbox{$[-\pi,\pi)^d\subset\bR^d$}. Write
	\[
	\fT\tilde K_\mu
	=
	\tilde K_\mu
	+
	\bigl(\fT\tilde K_\mu-\tilde K_\mu\bigr).
	\]
	Using the bound $\tilde K_1(x)\lesssim\exp(-|x|)$, uniformly in
	$x\in\bR^d$, show that for every $p\in[1,\infty]$ and every $q>0$,
	\[
	\bigl\|\fT\tilde K_\mu-\tilde K_\mu\bigr\|_{L^p(\bT)}
	\lesssim
	[\mu]^q .
	\]
\end{ex}

\section{Uniform bounds for cumulants}\label{sec:cumulants_uniform_bounds}

We now prove the main cumulant estimate, which is the core of the flow equation
approach. After choosing the local counterterms in the force appropriately, we
show that the cumulants of the effective force kernels satisfy the uniform bound
\[
\|\tilde K^{\otimes(n+m)}_{\mu}\ast E^\vI_{\kappa,\mu}\|_{\cE^\MM}
\lesssim
[\mu]^{\varrho_\varepsilon(\vI)-\sigma s(\vI)+d(n-1)}
\]
for all lists of indices \(\vI\) under consideration, uniformly in the
ultraviolet cutoff \(\kappa\in(0,1]\) and in the scale
\(\mu\in(0,1/2]\). The exponent
\(\varrho_\varepsilon(\vI)-\sigma s(\vI)\) is dictated by power counting,
whereas the additional factor \([\mu]^{d(n-1)}\) reflects the spatial
localization of connected correlations.

The proof proceeds by induction through the hierarchy of effective force
kernels. The cumulant flow equation, derived in
Lemma~\ref{lem:flow_E_form_bound}, provides the required bounds for cumulants
containing at least one scale derivative. Cumulants without scale derivatives
are then recovered by integration in the scale parameter. For irrelevant
cumulants this integration starts at \(\mu=0\) and is convergent by power
counting.

The delicate point is the treatment of expectations of relevant kernels, namely
those \(F^{i,m}_{\kappa,\mu}\) with \(\varrho(i,m)\leq0\). For such kernels the
scale derivative has the correct formal degree, but the corresponding bound is
not integrable at \(\mu=0\). This is precisely where renormalization enters. We
separate the expectation of a relevant kernel into a local part and a nonlocal
remainder. The local part is absorbed into the counterterm, while the nonlocal
remainder gains two powers of the scale and hence becomes integrable at the
origin. In the present model, the symmetries of the equation reduce the possible
local counterterms to the mass counterterms \(c_\kappa^{(i)}\varphi\).

Let us explain this point more concretely. Suppose that
\(F^{i,m}_{\kappa,\mu}\) is relevant. The flow equation gives a bound of the
form
\begin{equation}
	\|\tilde K^{\otimes(1+m)}_{\eta}\ast \bE \partial_\eta F^{i,m}_{\kappa,\eta}\|_{\cE^m}
	\lesssim
	[\eta]^{\varrho_\varepsilon(i,m)-\sigma}.
\end{equation}
Together with the identity
\begin{equation}
	F^{i,m}_{\kappa,\mu}
	=
	F^{i,m}_{\kappa,0}
	+
	\int_0^\mu \partial_\eta F^{i,m}_{\kappa,\eta}\,\rd\eta,
\end{equation}
this estimate does not directly imply
\begin{equation}
	\|\tilde K^{\otimes(1+m)}_{\mu}\ast \bE F^{i,m}_{\kappa,\mu}\|_{\cE^m}
	\lesssim
	[\mu]^{\varrho_\varepsilon(i,m)}.
\end{equation}
Indeed, the function
\([\eta]^{\varrho_\varepsilon(i,m)-\sigma}\) is not integrable at
\(\eta=0\) whenever \(\varrho_\varepsilon(i,m)<0\).

One possible way around this obstruction would be to impose a boundary
condition at a positive scale, for instance \(F^{i,m}_{\kappa,1/2}=0\), and use
the representation
\begin{equation}
	F^{i,m}_{\kappa,\mu}
	=
	F^{i,m}_{\kappa,1/2}
	-
	\int_\mu^{1/2}
	\partial_\eta F^{i,m}_{\kappa,\eta}\,\rd\eta.
\end{equation}
This would yield the desired bounds for the expectations of the relevant
kernels. Its drawback is that the kernels defining the original force
functional would then have to be modified by nonlocal correction terms:
\begin{equation}
	F^{i,m}_{\kappa}
	=
	F^{i,m}_{\kappa,0}
	=
	F^{i,m}_{\kappa,1/2}
	-
	\int_0^{1/2}
	\partial_\eta F^{i,m}_{\kappa,\eta}\,\rd\eta.
\end{equation}
Such a modification is undesirable, since it would destroy the local form of
the counterterms.

We therefore proceed differently. We decompose each relevant kernel
\[
F^{i,m}_{\kappa,\mu}(x;\rd y_1,\ldots,\rd y_m)
\]
into a local part, supported on the diagonal
\[
\{x=y_1=\cdots=y_m\},
\]
and a nonlocal remainder, which behaves like an irrelevant kernel. This
decomposition keeps the additional counterterms local. Using the symmetries and
the specific form of the equation, the analysis reduces to the family
\[
(F^{i,1}_{\kappa,\mu})_{i\in\{1,\ldots,i_\sharp\}}.
\]
Thus, in order to bound \(\bE F^{i,1}_{\kappa,\mu}\), we decompose
\[
\bE F^{i,1}_{\kappa,\mu}(x;\rd y)
\]
into a component proportional to \(\delta_x(\rd y)\) and a remainder. The maps
used for this decomposition are introduced next.
\begin{dfn}
For $m\in\bN_+$ we define $\delta^{[m]}\in\sS'(\bM^{1+m})$ by the equality
\begin{equation}
 \langle \delta^{[m]},\psi\otimes\varphi_1\otimes\ldots\otimes\varphi_m\rangle:=\int_\bM\psi(x)\varphi_1(x)\ldots\varphi_m(x)\,\rd x
\end{equation} 
for all $\psi,\varphi_1,\ldots,\varphi_m\in\sS(\bM)$. Let $\cX^a(x;y):=(x-y)^a$ for $a\in\bN_0^d$ and $x,y\in\bM$ and let $V\in\cV_\rt^1$ be such that $V(x;y)=V(x-y;0)$, $V(x;y)=V(-x;-y)$ and $\cX^a V\in\cV_\rt^1$ for all $a\in\bN_0^d$. We define $\fI V := \int_{\bM} V(x;y)\rd y\in\bR$. For $a\in\bN_0^d$ we define $\fR^a V\in\cV_\rt^1$ by the equality
\begin{equation}
 (\fR^a V)(x;y):=\frac{|a|}{a!}\int_0^1 (1-\tau)^{|a|-1}/\tau^d~ (\cX^a V)(x;x+(y-x)/\tau)\,\rd\tau
\end{equation}
for all $x,y\in\bM$.
\end{dfn}

\begin{ex}\label{ex:taylor}
Prove that the following equality $V = (\fI V)\,\delta^{[1]} + \sum_{|a|=2}\partial^a\fR^a V$ holds in $\sS'(\bM^2)$, where the sum is over $a\in\bN_0^d$ and $\partial^a$ denotes the derivative with respect to the second argument. Show that $|\fI V|\leq \|V\|_{\cV_\rt^1}$ and $\|\fR^a V\|_{\cV_\rt^1}\leq \|\cX^a V\|_{\cV_\rt^1}$. Hint: Use the integral form of the Taylor remainder.
\end{ex}

\begin{rem}
	The proof of the following theorem is inspired by Polchinski's flow-equation
	argument for perturbative renormalizability in quantum field theory
	\cite{polchinski1984}; see~\cite{muller2003} for a review. The basic strategy is
	to use the renormalization group flow equation to propagate scale-dependent
	bounds, while fixing the relevant local parts by renormalization conditions.
	For non-perturbative uses of related flow-equation methods, see
	\cite{brydges1987Mayer,bauerschmidt2021}.
\end{rem}

\begin{rem}\label{rem:symmetries}
	The equation
\begin{equation}
	\varPhi_\kappa=G\ast F_\kappa[\varPhi_\kappa],
	\qquad
	F_\kappa[\varphi]:= \xi_\kappa 
	+\lambda\, \varphi^3
	+\sum_{i=1}^{i_\sharp}\lambda^i\, c_\kappa^{(i)} \varphi,
\end{equation}
	is invariant under the transformations
	\[
	(\varPhi_\kappa,\xi_\kappa)\mapsto -(\varPhi_\kappa,\xi_\kappa)
	\qquad\text{and}\qquad
	(\varPhi_\kappa,\xi_\kappa)\mapsto
	(\varPhi_\kappa(-\,\Cdot),\xi_\kappa(-\,\Cdot)).
	\]
	Since the law of the noise \(\xi_\kappa\) is itself invariant under
	\[
	\xi_\kappa\mapsto -\xi_\kappa
	\qquad\text{and}\qquad
	\xi_\kappa\mapsto \xi_\kappa(-\,\Cdot),
	\]
	it follows that
	\[
	E^\vI_{\kappa,\mu}=0
	\qquad\text{unless}\qquad
	n(\vI)+m(\vI)\in2\bN_0,
	\]
	and that the cumulants \(E^\vI_{\kappa,\mu}\) are invariant under inversion through the origin in \(\bM^{n(\vI)+m(\vI)}\).
	
	These symmetry properties play a crucial role in determining the possible counterterms required for renormalization. In general, the relevant local parts of the effective force kernels are proportional to distributions of the form
	\begin{equation}
		\partial_{y_1}^{a_1}\delta_x(\rd y_1)\cdots
		\partial_{y_m}^{a_m}\delta_x(\rd y_m),
		\qquad
		|a_1|+\cdots+|a_m|+\varrho(1,m)\le0,
	\end{equation}
	where we used that
	\[
	\varrho(i,m)\ge \varrho(1,m),
	\qquad
	i\in\bN_+.
	\]
	Consequently, the corresponding local counterterms that may appear in the equation are of the form
	\begin{equation}
		(\partial^{a_1}\varphi)\cdots(\partial^{a_m}\varphi),
		\qquad
		|a_1|+\cdots+|a_m|+\varrho(1,m)\le0.
	\end{equation}
	In the case \(d\in\{2,\ldots,6\}\) and \(\sigma\in(d/3,d/2]\), we have
	\begin{equation}
		\varrho(1,m)
		=
		-\frac d2
		+
		m\Bigl(\frac d2-\sigma\Bigr)
		+
		(3\sigma-d).
	\end{equation}
	A straightforward analysis then shows that the only possible counterterms are
	\[
	\varphi^3,
	\qquad
	\varphi^2,
	\qquad
	\varphi,
	\qquad
	\partial\varphi,
	\qquad
	1.
	\]
	The above symmetry considerations imply that the counterterms proportional to
	\[
	\varphi^2,
	\qquad
	\partial\varphi,
	\qquad
	1
	\]
	vanish identically. Moreover, it turns out that the cubic term \(\varphi^3\) does not require renormalization. As a consequence, the only remaining counterterms are the mass renormalization terms proportional to \(\varphi\).
\end{rem}

\begin{thm}\label{thm:cumulants}
	There exists a choice of the mass renormalization constants $$(c^{(i)}_\kappa)_{i\in\{1,\ldots,i_\sharp\}}$$ in~\eqref{eq:force} such that for all lists of indices
	$\vI=((i_1,m_1,s_1,0),\ldots,(i_n,m_n,s_n,0))$ the bound
	\begin{equation}\label{eq:thm_cumulants}
		\|\tilde K^{\otimes(n+m)}_{\mu}\ast E^\vI_{\kappa,\mu}\|_{\cE^\MM}
		\lesssim 
		[\mu]^{\varrho_\varepsilon(\vI)-\sigma s(\vI)+d(n-1)}
	\end{equation}
	holds uniformly in $\kappa\in(0,1]$, $\mu\in(0,1/2]$, where $\MM=(m_1,\ldots,m_n)$, $m=m_1+\ldots+m_n$.
\end{thm}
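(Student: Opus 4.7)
The proof is by Polchinski-style induction on the lexicographic pair $(\rri(\vI),-\rrm(\vI))$: primarily on $\rri(\vI)$, and for each fixed value on $\rrm(\vI)$ decreasing from $3\rri(\vI)$. This ordering is dictated by Lemma~\ref{lem:flow_E_form_bound}: the lists $\vK,\vL,\vM$ on its right-hand side have either strictly smaller $\rri$ or equal $\rri$ and strictly larger $\rrm$. The base $\rri(\vI)=0$ reduces to joint cumulants of the Gaussian noise $\xi_\uv$; by Wick's formula only the two-point covariance contributes, and the bound follows from its explicit form. The secondary base $\rrm(\vI)>3\rri(\vI)$ is vacuous by Exercise~\ref{ex:flow_consistent}.

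In the inductive step, permutation symmetry of $\cE^\MM$ lets me place an index with $s_j=1$ first whenever $\rs(\vJ)\ge 1$, and then Part~(B) of Lemma~\ref{lem:flow_E_form_bound} delivers the bound directly from the inductive hypothesis, with no scale integration. For $\rs(\vJ)=0$ I integrate the flow along $\uIR$. The derivative $\partial_\uIR E^\vJ_{\uv,\uIR}$ carries one $s=1$ entry and is controlled by the case just treated; converting from $\tilde K_\uIR$ to $\tilde K_\sIR$ via $\tilde K_\sIR=\tilde K_{\sIR,\uIR}\ast\tilde K_\uIR$ with $\|\tilde K_{\sIR,\uIR}\|_\cK=1$ (Exercise~\ref{ex:K_mu}(3)) gives
\begin{equation}
\|\tilde K^{\otimes(\rn(\vJ)+\rrm(\vJ))}_\sIR \ast \partial_\uIR E^\vJ_{\uv,\uIR}\|_{\cE^\MM}\lesssim [\uIR]^{\varrho_\varepsilon(\vJ)-\sigma+\rdim(\rn(\vJ)-1)}.
\end{equation}
If $\varrho(\vJ)+\rdim(\rn(\vJ)-1)>0$ (the \emph{irrelevant} case, which by Remark~\ref{rem:rho2} also holds with $\varrho_\varepsilon$), I integrate from $0$; the integral yields the claimed power $[\sIR]^{\varrho_\varepsilon(\vJ)+\rdim(\rn(\vJ)-1)}$, and the boundary value $E^\vJ_{\uv,0}$ is a cumulant of the explicit data from Remark~\ref{rem:force_coefficients} and is handled by direct inspection.

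The \emph{relevant} subcase $\varrho(\vJ)+\rdim(\rn(\vJ)-1)\le 0$ is where renormalization enters. Here I integrate \emph{backward} from $\sIR=1/2$,
\begin{equation}
E^\vJ_{\uv,\sIR}=E^\vJ_{\uv,1/2}-\int_\sIR^{1/2}\partial_\uIR E^\vJ_{\uv,\uIR}\,\rd\uIR,
\end{equation}
so that the integral contributes the correct non-positive exponent, and the task reduces to controlling the boundary value $E^\vJ_{\uv,1/2}$. All relevant cumulants with $(\rn(\vJ),\rrm(\vJ))\ne(1,1)$ are ruled out by parity in the Gaussian noise combined with the structure of the nonzero force coefficients in Remark~\ref{rem:force_coefficients}; only the diagonal family $\vJ=((i,1,0,0))$ with $i\in\{1,\ldots,i_\sharp\}$ remains. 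For these I apply Exercise~\ref{ex:taylor},
\begin{equation}
\bE F^{i,1}_{\uv,1/2}=(\fI\bE F^{i,1}_{\uv,1/2})\,\delta^{[1]}+\sum_{|a|=2}\partial^a\fR^a\bE F^{i,1}_{\uv,1/2},
\end{equation}
and \emph{define} $c_\uv^{(i)}$, recursively in $i$ consistently with Remark~\ref{rem:counterterms}, so that $\fI\bE F^{i,1}_{\uv,1/2}=0$. Each Taylor remainder carries two extra derivatives which, via the support property of Lemma~\ref{lem:support}, translate into factors of $[\sIR]^2$, shifting the effective exponent into the irrelevant regime and yielding the target bound.

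The main obstacle is precisely this boundary analysis: I must show that the position-weighted quantities $\cX^a \bE F^{i,1}_{\uv,\sIR}$ themselves satisfy a version of the inductive bound with exponent shifted by $+|a|$. This requires a compatible flow identity for $\cX^a E^\vJ$, obtained by distributing the position weight across the bilinear maps $\fA,\fB$ via a Leibniz-type rule and absorbing the residual weight through Lemma~\ref{lem:support}. Verifying that the single scalar $c_\uv^{(i)}$ is enough to kill the single divergent integral $\fI\bE F^{i,1}_{\uv,1/2}$ at each order $i$ is essentially the statement that the symmetry class of the cubic interaction contains no further relevant monomials beyond the mass term, and once this is established the recursion on $i$ determines all counterterms consistently and closes the induction.
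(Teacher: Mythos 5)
Your overall strategy matches the paper: induction on $(\rri(\vI),-\rrm(\vI))$, the flow equation of Lemma~\ref{lem:flow_E_form_bound} for lists with $\rs(\vI)\geq 1$, scale integration for $\rs(\vI)=0$, and a split into irrelevant versus relevant cumulants with the single relevant family $\vI=(i,1,0,0)$, $i\leq i_\sharp$, handled by a Taylor decomposition with the mass counterterm killing the local part. Your renormalization condition $\fI\,\bE F^{i,1}_{\uv,1/2}=0$ also coincides with the paper's.

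However, there is a genuine gap in your treatment of the relevant case, and it has two related faces. First, you apply the decomposition $\bE F^{i,1}_{\uv,1/2}=(\fI\,\bE F^{i,1}_{\uv,1/2})\,\delta^{[1]}+\sum_{|a|=2}\partial^a\fR^a\bE F^{i,1}_{\uv,1/2}$ at the terminal scale $\sIR=1/2$ and claim the Taylor remainder, ``via the support property of Lemma~\ref{lem:support}, translates into factors of $[\sIR]^2$.'' But Lemma~\ref{lem:support} localizes $\bE F^{i,1}_{\uv,1/2}$ near the diagonal at scale $[1/2]=O(1)$, not $[\sIR]$, so $\|\cX^a\,\bE F^{i,1}_{\uv,1/2}\|\lesssim\|\bE F^{i,1}_{\uv,1/2}\|$ with no $\sIR$-gain. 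Convolving $\partial^a\fR^a\bE F^{i,1}_{\uv,1/2}$ with $\tilde K_\sIR^{\otimes 2}$ only costs $[\sIR]^{-|a|}=[\sIR]^{-2}$ from moving $\partial^a$ onto the kernel, leaving a bound $\lesssim[\sIR]^{-2}\|\bE F^{i,1}_{\uv,1/2}\|$, which is strictly worse than the target $[\sIR]^{\varrho_\varepsilon(i,1)}$ since $\varrho_\varepsilon(i,1)+2>0$. Second, the backward integral $\int_\sIR^{1/2}\tilde K_\sIR^{\otimes 2}\ast\partial_\uIR E^\vJ_{\uv,\uIR}\,\rd\uIR$ is not controlled by the inductive hypothesis: for $\uIR\geq\sIR$, Exercise~\ref{ex:K_mu}(3) gives $\tilde K_\uIR=\tilde K_{\uIR,\sIR}\ast\tilde K_\sIR$, so $\|\tilde K_\uIR^{\otimes\cdot}\ast V\|\leq\|\tilde K_\sIR^{\otimes\cdot}\ast V\|$ — the inequality runs the wrong way, and the finer kernel $\tilde K_\sIR$ cannot be estimated through the coarser one. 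Uniform-in-$\uv$ control of the $\tilde K_\sIR$-smoothed integrand at scales $\uIR>\sIR$ is exactly what the induction does not provide.

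The paper's proof sidesteps both issues by applying the decomposition of Exercise~\ref{ex:taylor} to the \emph{derivative} $\dot E^i_{\uv,\uIR}=\bE\,\partial_\uIR F^{i,1}_{\uv,\uIR}$ at each intermediate scale $\uIR$, not to the boundary value. There the support radius is $[\uIR]$, so $\cX^a$ contributes $[\uIR]^{|a|}$ (made rigorous in Lemma~\ref{lem:supposrt_V_Kmu}), and since $\varrho_\varepsilon(i,1)+2>0$ the forward integral $\int_0^\sIR[\uIR]^{\varrho_\varepsilon(i,1)-\sigma+2}\,\rd\uIR$ converges at the origin; here $\tilde K_\sIR=\tilde K_{\sIR,\uIR}\ast\tilde K_\uIR$ (now $\uIR\leq\sIR$) does factor through the induction bound. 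Only the scalar local piece $\partial_\uIR\hat E^i_{\uv,\uIR}=\fI(\tilde K_\uIR^{\otimes 2}\ast\dot E^i_{\uv,\uIR})$ is integrated backward from $1/2$, and being scalar it carries no smoothing kernel at all, so the backward direction is harmless. This scale-by-scale decomposition is essential; it cannot be postponed to a single decomposition of the boundary value. Your closing remark about a weighted flow identity for $\cX^a E^\vJ$ points in a viable but substantially more laborious direction — distributing weights across $\fA,\fB$ via Leibniz produces many new terms with derivatives hitting $\dot G_\sIR$ — and the paper's formulation avoids it entirely.
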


\begin{proof}
We first note that the theorem is trivially true for all lists of indices $\vI$ such that $m(\vI)>3i(\vI)$ since then $E^\vI_{\kappa,\mu}=0$. The rest of the proof is by induction.

\textit{The base case:} 
Consider a list of indices $\vI$ such that \mbox{$i(\vI)=0$}. In this case the cumulants $E^\vI_{\kappa,\mu}$ coincide with the cumulants of the white noise $\xi_\kappa$. The only non-vanishing cumulant is the covariance corresponding to $n(\vI)=2$, $\MM(\vI)=(0,0)$, $m(\vI)=0$ and $s(\vI)=0$. It holds
\begin{equation}
 \|\bE(\tilde K_\mu\ast\xi_\kappa,\tilde K_\mu\ast\xi_\kappa)\|_{\cE^\MM} \leq \sup_{x_1\in\bT}\int_{\bT}|\bE(\xi(x_1)\xi(\rd x_2))| = 1.
\end{equation}
This finishes the proof of the base case.

\textit{Induction step:}
Fix $i\in\bN_+$ and $m\in\bN_0$. Assume that the theorem is true for all lists of indices $\vI$ such that either $i(\vI)<i$, or $i(\vI)=i$ and $m(\vI)>m$. We shall prove the theorem for all $\vI$ such that $i(\vI)=i$ and $m(\vI)=m$. 

Consider the case $s(\vI)> 0$. Then we use the flow equation for cumulants introduced in the previous section. More precisely, the bound~\eqref{eq:thm_cumulants} follows from the inductive assumption and Lemma~\ref{lem:flow_E_form_bound}~(B). 

It remains to prove the statement for lists of indices $$\vI = ((i_1,m_1,0,0),\ldots,(i_n,m_n,0,0))$$ such that $s(\vI)=0$. It follows from Definition~\ref{dfn:cumulants_eff_force} of the cumulants $E^\vI_{\kappa,\mu}$ that 
\begin{equation}\label{eq:thm_cumulants_ind_step}
 E^\vI_{\kappa,\mu}
 = 
 E^\vI_{\kappa,0} + \sum_{q=1}^n \int_0^\mu E^{\vI_q}_{\kappa,\eta}\,\rd\eta,
\end{equation}
where
\begin{equation}
 \vI_q = ((i_1,m_1,0,0),\ldots, (i_q,m_q,1,0),\ldots,(i_n,m_n,0,0)).
\end{equation}
Note that $s(\vI_q)=1$, hence the bound~\eqref{eq:thm_cumulants} has already been established for $E^{\vI_q}_{\kappa,\eta}$.

First, let us analyze the {\it irrelevant cumulants}, i.e. those with $\vI$ such that
\begin{equation}
 \varrho(\vI)+(n(\vI)-1)d>0.
\end{equation}
Let us recall the non-zero force kernels 
\begin{equation}
 F^{0,0}_\kappa=\xi_\kappa,
 \quad
 F^{1,3}_\kappa=\delta^{[3]},
 \quad
 F^{i,1}_\kappa=c_\kappa^{(i)}\,\delta^{[1]}, 
 \quad
 i\in\{1,\ldots,i_\sharp\}.
\end{equation}
In particular $F^{i,m}_{\kappa,0}=F^{i,m}_\kappa$ is deterministic if $i\in\bN_+$. If $n(\vI)>1$, then $E^\vI_{\kappa,0}$ is a joint cumulant of a list of at least two random distributions. Since $i(\vI)=i>0$ one of these distributions is deterministic and the cumulant vanishes. If $n(\vI)=1$ and $\varrho(\vI)>0$, then $E^\vI_{\kappa,0}$ coincides with $F^{i,m}_\kappa$ for some $i,m\in\bN_0$ such that $\varrho(i,m)>0$. However, $F^{i,m}_\kappa$ vanishes for $i,m\in\bN_0$ such that $\varrho(i,m)>0$. Hence, we conclude that $E^\vI_{\kappa,0}=0$ for all irrelevant cumulants. Using this fact and~\eqref{eq:thm_cumulants_ind_step} we arrive at
\begin{equation}\label{eq:K_bound_s_u}
 \|\tilde K_\mu^{\otimes(n+m)}\ast E^\vI_{\kappa,\mu}\|_{\cE^\MM}\leq 
 \sum_{q=1}^n\int_0^\mu \|\tilde K_\eta^{\otimes(n+m)}\ast E^{\vI_q}_{\kappa,\eta}\|_{\cE^\MM}\,\rd\eta,
\end{equation}
which follows from the estimate proved in Exercise~\ref{ex:kernel_u_v_cumulants}. Using the induction hypothesis we arrive at
\begin{equation}
 \|\tilde K_\mu^{\otimes(n+m)}\ast E^\vI_{\kappa,\mu}\|_{\cE^\MM} 
 \lesssim \int_0^\mu [\eta]^{\varrho_\varepsilon(\vI)-\sigma+d(n-1)} 
 \lesssim [\mu]^{\varrho_\varepsilon(\vI)+d(n-1)}.
\end{equation}
Note that to get the last bound we crucially used the fact that $\varrho_\varepsilon(\vI)+d(n-1)>0$ for sufficiently small $\varepsilon\in(0,\infty)$. This finishes the proof of the induction step for the irrelevant cumulants.

Next, let us analyze the {\it relevant cumulants}, i.e. those with $\vI$ such that
\begin{equation}
 \varrho(\vI)+(n(\vI)-1)d\leq0.
\end{equation}
Note that for $i(\vI)\geq 1$ the above inequality implies that \mbox{$n(\vI)=1$}. Consequently, by Remark~\ref{rem:symmetries} the non-trivial cases are 
\[
\vI=((1,3,0,0))
\qquad\text{or}\qquad
\vI=((i,1,0,0))
\quad\text{with } i\in\{1,\ldots,i_\sharp\}.
\] 
Thus, we have to prove bounds for the following cumulants
\begin{equation}
 \bE F^{1,3}_{\kappa,\mu},
 \qquad\text{and}\qquad
 \bE F^{i,1}_{\kappa,\mu},
 \quad
 i\in\{1,\ldots,i_\sharp\}.
\end{equation}
Recall that $F^{i,m}_{\kappa,0}=F^{i,m}_\kappa$. Since $\partial_\mu F^{1,3}_{\kappa,\mu}=0$ we have $\bE F^{1,3}_{\kappa,\mu}=\delta^{[3]}$. Consequently,
\begin{equation}
 \|\tilde K_\mu^{\otimes4} \ast \bE F^{1,3}_{\kappa,\mu}\|_{\cV_\rt^3} =1 \leq[\mu]^{-\varepsilon} = [\mu]^{\varrho_\varepsilon(1,3)}.
\end{equation}
Let us now study the cumulants $\bE F^{i,1}_{\kappa,\mu}$ with $ i\in\{1,\ldots,i_\sharp\}$. By Exercise~\ref{ex:taylor} the bound for the cumulant 
$$
E^{\vI_1}_{\kappa,\mu}=\bE \partial_\mu F^{i,1}_{\kappa,\mu}=:\dot E^i_{\kappa,\mu},
$$
which follows from the induction hypothesis, implies that
\begin{equation}\label{eq:cumulants_bound_I}
 \|\tilde K_\mu^{\otimes2}\ast\dot E^i_{\kappa,\mu}\|_{\cV_\rt^1} \lesssim [\mu]^{\varrho_\varepsilon(i,1)-\sigma},
 \qquad
 |\fI(\tilde K_\eta^{\otimes2}\ast\dot E^i_{\kappa,\eta})|\lesssim 
 [\eta]^{\varrho_\varepsilon(i,1)-\sigma}.
\end{equation}
Using Exercise~\ref{ex:taylor} we introduce the following decomposition
\begin{equation}\label{eq:E_decomposition}
 \bE F^{i,1}_{\kappa,\mu} = 
 \hat{E}^i_{\kappa,\mu}\,\delta^{[1]}
 +
 \check E^i_{\kappa,\mu}
 +
 \tilde E^i_{\kappa,\mu}
 \in\sS'(\bM^2),
\end{equation}
where $\hat{E}^i_{\kappa,\mu}\in\bR$ and $\check E^i_{\kappa,\mu},\tilde E^i_{\kappa,\mu}\in\sS'(\bM^2)$ are defined by the equalities
\begin{equation}
\begin{split}
 \partial_\eta \hat E^i_{\kappa,\eta}
 &:=
 \fI(\tilde K_\eta^{\otimes2}\ast\dot E^i_{\kappa,\eta}),
 \\
 \partial_\eta \check E^i_{\kappa,\eta}
 &:=
 \textstyle\sum_{|a|=2}\partial^a\fR^a(\tilde K_\eta^{\otimes2}\ast\dot E^i_{\kappa,\eta}),
 \\
 \partial_\eta \tilde E^i_{\kappa,\eta}
 &:=
 \dot E^i_{\kappa,\eta}- \tilde K_\eta^{\otimes2}\ast\dot E^i_{\kappa,\eta},
\end{split}
\qquad\qquad
\begin{split}
\hat E^i_{\kappa,0}&:=c_\kappa^{(i)}
\\
\check E^i_{\kappa,0}&:=0,
\\
\tilde E^i_{\kappa,0}&:=0
\end{split}
\end{equation}
The motivation behind the above decomposition is that, as we shall see below, $\partial_\eta \check E^i_{\kappa,\eta}$ and $\partial_\eta \tilde E^i_{\kappa,\eta}$ satisfy the following bound
\begin{multline}
 \|K_\mu^{\otimes2}\ast\partial_\eta \check E^i_{\kappa,\eta}\|_{\cV_\rt^1}\vee
 \|K_\mu^{\otimes2}\ast\partial_\eta \tilde E^i_{\kappa,\eta}\|_{\cV_\rt^1}
 \\
 \lesssim [\eta/\mu]^2\,\|\tilde K_\eta^{\otimes2}\ast\dot E^i_{\kappa,\eta}\|_{\cV_\rt^1}
 \lesssim
  [\mu]^{-2}\,[\eta]^{\varrho_\varepsilon(i,1)-\sigma+2}.
\end{multline}
It turns out that for $d\in\{2,\ldots,6\}$, $\sigma\in(d/3,d/2]$ and sufficiently small $\varepsilon\in(0,\infty)$ it holds $\varrho_\varepsilon(i,1)+2>0$ for all $i\in\bN_+$. Consequently, the RHS of the above estimate is integrable in $\eta$ at $\eta=0$. On the other hand, the bound for $\partial_\eta\hat{E}^i_{\kappa,\eta}\,\delta^{[1]}$, given in~\eqref{eq:cumulants_bound_I}, is not integrable in $\eta$ at $\eta=0$ but this contribution is local and, as we will prove in a moment, $\hat{E}^i_{\kappa,\mu}\in\bR$ can be bounded by making a suitable choice of the counterterm $c_\kappa^{(i)}\in\bR$.

Let us first study the local term $\hat{E}^i_{\kappa,\mu}\,\delta^{[1]}$. We start by fixing the counterterm to be
\begin{equation}
 c_\kappa^{(i)} := -\int_0^{1/2} \fI(\tilde K_\eta^{\otimes2}\ast\dot E^i_{\kappa,\eta})\,\rd \eta.
\end{equation}
Then it holds
\begin{equation}
 \hat E^i_{\kappa,\mu} = c_\kappa^{(i)}+\int_0^\mu  \fI(\tilde K_\eta^{\otimes2}\ast\dot E^i_{\kappa,\eta})\,\rd\eta
 =
 -\int_\mu^{1/2} \fI(\tilde K_\eta^{\otimes2}\ast\dot E^i_{\kappa,\eta})\,\rd\eta.
\end{equation}
Consequently, since $\varrho_\varepsilon(i,1)<0$ for $i\in\{1,\ldots,i_\sharp\}$ we obtain 
\begin{equation}
 |\hat E^i_{\kappa,\mu}|\lesssim \int_\mu^{1/2}\,[\eta]^{\varrho_\varepsilon(i,1)-\sigma}\,\rd\eta \lesssim 
 [\mu]^{\varrho_\varepsilon(i,1)}.
\end{equation}
This implies the desired bound
\begin{equation}
 \|\tilde K_\mu^{\otimes2}\ast(\hat E^i_{\kappa,\mu}\,\delta^{[1]})\|_{\cV_\rt^1} \lesssim [\mu]^{\varrho_\varepsilon(i,1)}.
\end{equation}

Next, let us proceed to the estimates for the non-local terms $\check E^i_{\kappa,\eta}$ and $\tilde E^i_{\kappa,\eta}$. Using the boundary condition $\check E^i_{\kappa,0}=0$ and the bound $\|\partial^a \tilde K_\mu\|_\cK\lesssim [\mu]^{-|a|}$ proved in Lemma~\ref{lem:kernel_simple_fact}~(A) we estimate
\begin{equation}
 \|\tilde K_\mu^{\otimes2}\ast\check E^i_{\kappa,\mu}\|_{\cV_\rt^1} 
 \lesssim
 [\mu]^{-2}\sup_{|a|=2}\int_0^\mu
 \|\fR^a(\tilde K_\eta^{\otimes2}\ast \dot E^i_{\kappa,\eta})\|_{\cV_\rt^1}\,\rd\eta.
\end{equation}
By Exercise~\ref{ex:taylor} the RHS is bounded up to a constant by
\begin{equation}
 \sup_{|a|=2}~[\mu]^{-2}\int_0^\mu
 \|\cX^a(\tilde K_\eta^{\otimes2}\ast \dot E^i_{\kappa,\eta})\|_{\cV_\rt^1}\,\rd\eta
\end{equation}
Recall that by Lemma~\ref{lem:support} there exists $c\in(0,\infty)$ such that for all $\eta\in(0,1/2]$ the kernel $\partial_\eta F^{i,1}_{\kappa,\eta}$ as well as its expected value $\dot E^i_{\kappa,\eta}=\bE\partial_\eta F^{i,1}_{\kappa,\eta}$ are supported in the set
\begin{equation}
 \{(x,y)\in\bM^2\,|\,|x-y|\leq c\,[\eta]\}.
\end{equation}
Pretending that there exists $c\in(0,\infty)$ such that for all $\eta\in(0,1/2]$ the kernel $\tilde K_\eta$ is supported~in 
$$
\{x\in\bM\,|\,|x|<c\,[\eta]\}
$$
we obtain that $\tilde K_\eta^{\otimes2}\ast \dot E^i_{\kappa,\eta}$ has the same support property as $\dot E^i_{\kappa,\eta}$. Consequently,
\begin{equation}
 \|\cX^a(\tilde K_\eta^{\otimes2}\ast \dot E^i_{\kappa,\eta})\|_{\cV_\rt^1}\lesssim
 [\eta]^2\,\|\tilde K_\eta^{\otimes2}\ast \dot E^i_{\kappa,\eta}\|_{\cV_\rt^1}.
\end{equation}
Since the kernel $\tilde K_\eta$ does not have the above-mentioned support property the rigorous proof of the above estimate is slightly more complicated and is given in Lemma~\ref{lem:support_V_Kmu} below. Taking into account the induction hypothesis, we obtain the bound
\begin{equation}
 \|\tilde K_\mu^{\otimes2}\ast\check E^i_{\kappa,\mu}\|_{\cV_\rt^1} 
 \lesssim
 [\mu]^{-2}\int_0^\mu
 [\eta]^2\,\|\tilde K_\eta^{\otimes2}\ast \dot E^i_{\kappa,\eta}\|_{\cV_\rt^1}\,\rd\eta
 \lesssim 
 [\mu]^{-2}\int_0^\mu
 [\eta]^{\varrho_\varepsilon(i,1)-\sigma+2}\,\rd\eta.
\end{equation}
Similarly, by Exercise~\ref{ex:K_cumulants} we have 
\begin{equation}
 \|\tilde K_\mu^{\otimes2}\ast\tilde E^i_{\kappa,\mu}\|_{\cV_\rt^1} 
 \lesssim
 [\mu]^{-2}\int_0^\mu [\eta]^2\,
 \|\tilde K_\eta^{\otimes2}\ast \dot E^i_{\kappa,\eta}\|_{\cV_\rt^1}\,\rd\eta
 \lesssim 
 [\mu]^{-2}\int_0^\mu
 [\eta]^{\varrho_\varepsilon(i,1)-\sigma+2}\,\rd\eta.
\end{equation}
Finally, we use the fact that $\varrho_\varepsilon(i,1)+2>0$ to show
\begin{equation}
 [\mu]^{-2}\int_0^\mu
 [\eta]^{\varrho_\varepsilon(i,1)-\sigma+2}\,\rd\eta
 \lesssim
 [\mu]^{\varrho_\varepsilon(i,1)},
\end{equation}
which concludes the proof.
\end{proof}

\begin{ex}
Verify~\eqref{eq:E_decomposition} using Remark~\ref{rem:symmetries}.
\end{ex}

\begin{ex}
Check that in the Da~Prato--Debussche regime~\cite{daprato2003} corresponding to $\sigma\in(2d/5,d/2]$ the above proof simplifies as the use of the maps $\fI$ and $\fR$ is redundant.
\end{ex}

\begin{rem}[$\spadesuit$]
Observe that the counterterms $(c^{(i)}_\kappa)_{i\in\{1,\ldots,i_\sharp\}}$ are implicitly fixed by the following renormalization conditions
\begin{equation}
 \int_\bM \bE F^{i,1}_{\kappa,\mu=1/2}(x;\rd y) = 0,
 \qquad\quad
 i\in\{1,\ldots,i_\sharp\}.
\end{equation}
The procedure of fixing the counterterms works because of the property of the effective force kernels mentioned in Remark~\ref{rem:counterterms}.
\end{rem}

We conclude this section by recording the auxiliary estimates that were used in
the proof of Theorem~\ref{thm:cumulants}. The first exercise states the
monotonicity of the \(\cE^\MM\)-norm under additional smoothing: passing from a
finer scale \(\eta\) to a coarser scale \(\mu\) cannot increase the norm. The
second exercise gives a quantitative comparison between smoothing at scale
\(\mu\) and smoothing first at scale \(\eta\) and then at scale \(\mu\); the
gain \([\eta/\mu]^2\) is the analytic input behind the improved bound for the
nonlocal remainder. The following lemma formalizes the corresponding spatial
localization estimate: if a kernel is supported within distance \(O([\mu])\) of
the diagonal, then multiplication by powers of the relative coordinate costs
the expected factor \([\mu]^{|a|}\), even after convolution with the
non-compactly supported kernel \(\tilde K_\mu\). Finally, the last exercise
extends Theorem~\ref{thm:cumulants} to cumulants containing derivatives with
respect to the ultraviolet cutoff \(\kappa\), which is the version needed for
the convergence of the enhanced noise as \(\kappa\searrow0\).

\begin{ex}[$\spadesuit$]\label{ex:kernel_u_v_cumulants}
Let $n\in\bN_+$, $\MM=(m_1,\ldots,m_n)\in\bN_0^n$ and $m=m_1+\ldots+m_n$. Using Exercise~\ref{ex:K_mu}~(3) prove that for all $0\leq\eta\leq\mu\leq1$ and $V\in\cE^\MM$ it holds
\begin{equation}
 \|\tilde K_\mu^{\otimes(n+m)}\ast V\|_{\cE^\MM}\leq \|\tilde K_\eta^{\otimes(n+m)}\ast V\|_{\cE^\MM}.
\end{equation}
\end{ex}

\begin{ex}[$\spadesuit$]\label{ex:K_cumulants}
Show that it holds
\begin{equation}
 \|(\tilde K_\mu^{\otimes2}-\tilde K_\mu^{\otimes2}\ast \tilde K_\eta^{\otimes2})\ast V\|_{\cV_\rt^1}\lesssim [\eta/\mu]^2~\|\tilde K_\eta^{\otimes2}\ast V\|_{\cV_\rt^1}
\end{equation}
uniformly over $\mu,\eta\in(0,1]$ and $V\in\cV_\rt^1$. Hint: Let $\hat K_\mu\in\cK$ be the solution of \mbox{$\hat\fP_\mu \hat K_\mu=\delta_0$}, where \mbox{$\hat\fP_\mu:=(1-[\mu]^2\Delta)$}. Verify that $\hat K_\mu-\hat K_\mu \ast \hat K_\eta = [\eta/\mu]^2~(\hat K_\eta - \hat K_\eta\ast \hat K_\mu)$.
\end{ex}

\begin{lem}[$\spadesuit$]\label{lem:support_V_Kmu}
Fix some $m\in\bN_+$ and $c\in\bR$. There exists $C\in(0,\infty)$ such that if for some $\mu\in(0,1/2]$ and $V\in\sS'(\bT\times\bM^m)$ it holds
\begin{equation}
 \supp\,V\subset\{(x,y_1,\ldots,y_m)\,|\,|x-y_1|\vee\ldots\vee|x-y_m|\leq c\,[\mu]\}
\end{equation}
then 
\begin{equation}
 \|\cX^a (\tilde K_\mu^{\otimes(1+m)}\ast V)\|_{\cV_\rt^m}\leq C\,[\mu]^{|a|}\, \|\tilde K_\mu^{\otimes(1+m)}\ast V\|_{\cV_\rt^m}.
\end{equation}
\end{lem}
\begin{rem}
Note that the lemma would be obvious if $\tilde K_\mu\in C(\bM)$ was compactly supported in a ball $\{x\in\bM\,|\,|x|<c\,[\mu]\}$ for some $c\in(0,\infty)$ independent of $\mu\in(0,1/2]$. Even though this is not exactly true, one can easily prove the above result by leveraging the fact that the kernel $\tilde K_\mu$ decays exponentially with the rate $1/[\mu]$.
\end{rem}
\begin{proof}[Proof $\mathrm{(}\spadesuit\mathrm{)}$.]
A simple rescaling reduces the proof to the case $\mu=1$. Let $v\in C^\infty(\bM)$ be such that $\supp\, v\subset\{x\in\bM\,|\,|x|<1\}$ and $v=1$ on $\{x\in\bM\,|\,|x|\leq 1/2\}$. For $\tau\in[1,\infty)$ let $ L_{\tau}(x):= \tilde K_1(x)\,v(x/\tau)$. It holds
\begin{equation}
\begin{aligned}
 \|\cX^a (\tilde K_1^{\otimes(1+m)}\ast V)\|_{\cV_\rt^m}
 &\leq
 \|\cX^a (L_1^{\otimes(1+m)}\ast V)\|_{\cV_\rt^m}
 +
 \int_1^\infty \|\cX^a \partial_\tau ( L_{\tau}^{\otimes(1+m)}\ast V)\|_{\cV_\rt^m}\,\rd\tau
 \\
 &\lesssim
 \|L_1^{\otimes(1+m)}\ast V\|_{\cV_\rt^m}
 +
 \int_1^\infty \tau^{|a|}\,\|\partial_\tau ( L_{\tau}^{\otimes(1+m)}\ast V)\|_{\cV_\rt^m}\,\rd\tau,
\end{aligned}
\end{equation}
where to get the last estimate we used the fact that $\supp\, L_{\tau}\subset \{x\in\bM\,|\,|x|<\tau\}$. Next, we observe that
\begin{equation}
  L_{\tau} = \tilde\fP_1 L_{\tau}\ast \tilde K_1,
 \qquad
 \partial_\tau L_{\tau} = \tilde\fP_1\partial_\tau L_{\tau}\ast \tilde K_1
\end{equation}
and $\|\tilde\fP_1 L_{\tau}\|_\cK \lesssim 1$ and $\|\tilde\fP_1\partial_\tau L_{\tau}\|_\cK\lesssim \tau^{-N}$ uniformly in $\tau\in[1,\infty)$ for any $N\in\bN_+$ because of the exponential decay of the kernel $\tilde K_1$. Consequently, we have
\begin{equation}
 \|\cX^a (\tilde K_1^{\otimes(1+m)}\ast V)\|_{\cV_\rt^m}
 \lesssim
 \|\tilde K_1^{\otimes(1+m)}\ast V\|_{\cV_\rt^m}
 +
 \int_1^\infty \tau^{|a|-N}\,\|\tilde K_1^{\otimes(1+m)}\ast V\|_{\cV_\rt^m}\,\rd\tau,
\end{equation}
which finishes the proof.
\end{proof}

\begin{ex}[$\spadesuit$]\label{ex:cumulants}
Prove that with the choice of the counterterms $(c^{(i)}_\kappa)_{i\in\{1,\ldots,i_\sharp\}}$ made in the proof of Theorem~\ref{thm:cumulants} for all lists of indices
$\vI=((i_1,m_1,s_1,r_1),\ldots,(i_n,m_n,s_n,r_n))$ the bound
\begin{equation}
 \|\tilde K^{\otimes(n+m)}_{\mu}\ast E^\vI_{\kappa,\mu}\|_{\cE^\MM}
 \lesssim 
 [\kappa]^{(\varepsilon-\sigma)r(\vI)}\,
 [\mu]^{\varrho_\varepsilon(\vI)-\sigma s(\vI)+d(n-1)}
\end{equation}
holds uniformly in $\kappa\in(0,1]$, $\mu\in(0,1/2]$. Hint: First generalize appropriately Lemma~\ref{lem:flow_E_general}. Then follow the proof of Theorem~\ref{thm:cumulants}. To prove the base case of the induction verify that
\begin{equation}
 \sup_{x\in\bT}\int_\bT|\bE((\tilde K_\mu\ast\partial_\kappa^{r_1}\xi_\kappa)(x)\,(\tilde K_\mu\ast\partial_\kappa^{r_2}\xi_\kappa)(y))|\,\rd y \lesssim [\kappa]^{(\varepsilon-\sigma)(r_1+r_2)} \,[\mu]^{-2\varepsilon}.
\end{equation}
\end{ex}

\section{Kolmogorov-type argument}\label{sec:probabilistic}

In this section we complete the passage from deterministic cumulant estimates
to almost sure bounds on the enhanced noise. The input is the family of
cumulant bounds proved in Theorem~\ref{thm:cumulants}, together with their
extension to \(\kappa\)-derivatives stated in Exercise~\ref{ex:cumulants}. These
bounds control the connected correlations of the effective force kernels at
each fixed scale. The goal is to turn this information into uniform random
bounds, simultaneously in the ultraviolet cutoff \(\kappa\) and in the scale
parameter \(\mu\).

The argument has three steps. First, we use the moment--cumulant expansion to
deduce moment bounds for the regularized effective force kernels from the
cumulant estimates. Since the cumulant bounds are formulated in the
\(\cE^\MM\)-norm, we also need a simple deterministic estimate converting this
norm into pointwise and \(L^n\)-type bounds after convolution with the kernels
\(\tilde K_\mu\). Second, we use Young's inequality and the smoothing properties
of \(\tilde K_\mu\) to upgrade the resulting \(L^n\)-bounds to bounds in the
kernel norm \(\cV^m\). Finally, we apply a Kolmogorov-type argument in the two
parameters \((\kappa,\mu)\). This produces a single random variable with finite
moments which controls the whole family of kernels uniformly over
\(\kappa\in(0,1]\) and \(\mu\in(0,1/2]\).

The main statement is Lemma~\ref{lem:probabilistic_bounds}. It gives the
stochastic estimate for a fixed effective force kernel from the corresponding
cumulant bounds. Exercise
\ref{ex:kolmogorov_norms} explains how to pass from the \(\cE^\MM\)-norm to a
pointwise estimate, Lemma~\ref{lem:expectation_sup} records the smoothing
estimate used to control suprema, and Lemma~\ref{lem:probabilistic_estimate}
contains the abstract two-parameter Kolmogorov argument. The final exercise then
uses the same estimates to obtain convergence as \(\kappa\searrow0\), thereby
identifying the limiting enhanced noise.

\begin{lem}\label{lem:probabilistic_bounds}
	Fix $n\in2\bN_+$ such that $d/n<\varepsilon$ and $i,m\in\bN_0$ such that $\varrho(i,m)\leq 0$. For $k\in\{1,\ldots,n\}$, $s,r\in\{0,1\}$ we define the list of indices $\vI\equiv\vI(k,s,r)=((i,m,s,r),\ldots,(i,m,s,r))$, $n(\vI)=k$. Assume that for all $k\in\{1,\ldots,n\}$ and $s,r\in\{0,1\}$ the bound
	\begin{equation}\label{eq:probabilistic_thm_assumption}
		\|\tilde K^{\otimes(k+km)}_\mu\ast E^{\vI}_{\kappa,\mu}\|_{\cE^{\MM(\vI)}}
		\lesssim
		[\kappa]^{k(\varepsilon-\sigma)r}
		[\mu]^{\varrho_\varepsilon(\vI)-\sigma s(\vI)+d(k-1)}
	\end{equation}
	holds uniformly in $\kappa\in(0,1]$ and $\mu\in(0,1/2]$. Then there exists a random variable $R\in[1,\infty]$ such that $\bE R^n<\infty$ and the following bound
	\begin{equation}
		\|
		K_\mu^{\otimes(1+m)}
		\ast F^{i,m}_{\kappa,\mu}\|_{\cV^m} \leq R\,[\mu]^{\varrho_{3\varepsilon}(i,m)}
	\end{equation}
	holds for all $\kappa\in(0,1]$ and $\mu\in(0,1/2]$.
\end{lem}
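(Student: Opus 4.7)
The plan follows the two-step strategy announced before the statement: first translate the cumulant bound \eqref{eq:probabilistic_thm_assumption} into moment bounds on $\bigl(K_\sIR^{\otimes(1+m)}\ast\partial_\sIR^s\partial_\uv^r F^{i,m}_{\uv,\sIR}\bigr)(x_0,\Cdot)$ tested against a single test function, and then convert these into the uniform $\cV^m$-bound via a Kolmogorov-type continuity argument.

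\emph{Step 1 (cumulants to moments).} For $x_0\in\bT$ and $\varphi\in C(\bT^m)$ with $\|\varphi\|_{L^\infty}\leq 1$, set
\begin{equation}
Z^{s,r}_{\uv,\sIR}(x_0,\varphi):=\bigl\langle (K_\sIR^{\otimes(1+m)}\ast\partial_\sIR^s\partial_\uv^r F^{i,m}_{\uv,\sIR})(x_0,\Cdot),\,\varphi\bigr\rangle .
\end{equation}
Expanding $\bE|Z^{s,r}_{\uv,\sIR}(x_0,\varphi)|^n$ by~\eqref{eq:expectation_cumulants} gives a sum over partitions $\{I_1,\ldots,I_r\}$ of $\{1,\ldots,n\}$ of products of joint cumulants of $|I_j|$ copies of $Z^{s,r}_{\uv,\sIR}(x_0,\varphi)$. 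Multilinearity of cumulants and the identity $\partial_\sIR^s\partial_\uv^r F^{i,m}_{\uv,\sIR}=K_\sIR^{-1}\ast\tilde K_\sIR\ast(\cdots)$ let me rewrite each such $k$-fold cumulant as $\bigl(\tilde K_\sIR^{\otimes(k+km)}\ast E^{\vI(s,r,k)}_{\uv,\sIR}\bigr)$ evaluated at $(x_0,\ldots,x_0)\in\bT^k$ and paired with $(K_\sIR^{\otimes km}\ast\varphi^{\otimes k})$. Using translation invariance of $E^{\vI(s,r,k)}$ to collapse the first argument, $\|K_\sIR\|_{L^\infty}\lesssim[\sIR]^{-d}$ to evaluate the remaining $x$-arguments at a single point, and the assumed cumulant bound (applied with $k$ copies, which is Theorem~\ref{thm:cumulants}/Exercise~\ref{ex:cumulants} for arbitrary $k\le n$), the factors $[\sIR]^{d(k-1)}$ in~\eqref{eq:probabilistic_thm_assumption} exactly cancel the $[\sIR]^{-d(k-1)}$ from evaluating at a single point. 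Summing the partition contributions (the worst scaling occurs for the partition into singletons, as in standard Wick-type combinatorics) yields
\begin{equation}
\bE\bigl|Z^{s,r}_{\uv,\sIR}(x_0,\varphi)\bigr|^n\;\lesssim\;[\sIR]^{n(\varrho_\varepsilon(i,m)-\sigma s)}\,\|\varphi\|_{L^\infty}^n
\end{equation}
uniformly in $x_0,\uv,\sIR,\varphi$.

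\emph{Step 2 (continuity in $(x_0,\uv,\sIR)$ and Kolmogorov).} Applying Step 1 with $(s,r)\in\{0,1\}^2$ controls the pointwise moments of $Z^{0,0}$, $\partial_\sIR Z^{0,0}=Z^{1,0}$ and $\partial_\uv Z^{0,0}=Z^{0,1}$, yielding via integration Hölder-type increment bounds
\begin{equation}
\bE\bigl|Z^{0,0}_{\uv,\sIR}(x_0,\varphi)-Z^{0,0}_{\uv',\sIR'}(x_0,\varphi)\bigr|^n\;\lesssim\;\bigl(|\sIR-\sIR'|+|\uv-\uv'|\bigr)^{n\theta}\,[\sIR\wedge\sIR']^{n(\varrho_\varepsilon(i,m)-\sigma\theta)}
\end{equation}
for any $\theta\in(0,1]$. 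Increments in $x_0$ are handled analogously using $\|\partial^a K_\sIR\|_\cK\lesssim[\sIR]^{-|a|}$ from Lemma~\ref{lem:kernel_simple_fact}. A generalized Kolmogorov continuity theorem (the content of Appendix~\ref{sec:probabilistic}) applied to the parameter space $(x_0,\uv,\sIR)\in\bT\times(0,1]^2$ then produces a random variable $R\in[1,\infty]$ with $\bE R^n<\infty$ controlling the supremum, provided $d/n<\varepsilon$ to generate the needed room. The loss $\varrho_\varepsilon\leadsto\varrho_{3\varepsilon}$ reflects three sources of $\varepsilon$-loss: one from Kolmogorov in $x_0$, one in $\sIR$ and one in $\uv$.

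\emph{Step 3 (reduction from pointwise-in-$\varphi$ to $\cV^m$-norm).} Finally, I pass from the sup over fixed $\varphi$ to the total-variation norm defining $\|\Cdot\|_{\cV^m}$. For $\uv>0$, $K_\sIR^{\otimes(1+m)}\ast F^{i,m}_{\uv,\sIR}$ is a smooth function, so by duality
\begin{equation}
\bigl\|K_\sIR^{\otimes(1+m)}\ast F^{i,m}_{\uv,\sIR}\bigr\|_{\cV^m}=\sup_{x_0\in\bT}\bigl\|(K_\sIR^{\otimes(1+m)}\ast F^{i,m}_{\uv,\sIR})(x_0,\Cdot)\bigr\|_{L^1(\bM^m)},
\end{equation}
which I realize as a supremum of $Z^{0,0}_{\uv,\sIR}(x_0,\varphi)$ over $\varphi$ ranging in a countable $L^\infty$-dense subset of the unit ball of $C(\bT^m)$; alternatively the $L^1$-norm at fixed $x_0$ is estimated by its moments via Fubini and Step 1.

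\emph{Expected main obstacle.} The principal difficulty is the $\cV^m$-norm being a supremum over an infinite-dimensional dual space rather than over finitely many parameters, as required by standard Kolmogorov arguments. Either one uses the smoothing effect of $K_\sIR$ to reduce to a supremum over a polynomial-in-$[\sIR]^{-1}$ number of test functions (absorbing the logarithm into the $\varepsilon\leadsto 3\varepsilon$ gap), or one works directly with the $L^1$-in-$\ry$ norm of the smooth function $K_\sIR^{\otimes(1+m)}\ast F^{i,m}_{\uv,\sIR}$ and bounds it at fixed $x_0$ via Fubini plus the pointwise moment bound of Step~1; coordinating these reductions with the three-fold Kolmogorov continuity in $(x_0,\uv,\sIR)$ is the technical heart of the argument.
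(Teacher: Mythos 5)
Your overall strategy is the same as the paper's (use \eqref{eq:expectation_cumulants} to turn the assumed cumulant bounds into $n$-th moment bounds, then a Kolmogorov-type argument to get a single random constant uniform in $x$, $\uv$, $\sIR$), and the scaling in your Step~1 is correct: the factor $[\sIR]^{d(k-1)}$ in the cumulant bound exactly pays for evaluating the smoothed cumulant at coincident $x$-points, so each partition block of size $k$ contributes $[\sIR]^{k(\varrho_\varepsilon(i,m)-\sigma s)}$. The $\varepsilon$-bookkeeping ($d/n<\varepsilon$ plus one loss each for the two scale parameters, giving $\varrho_\varepsilon\leadsto\varrho_{3\varepsilon}$) also matches the paper, which handles the sup over $(\uv,\sIR)$ by the deterministic integrate-the-derivative argument of Exercise~\ref{ex:probabilistic_estimate} rather than by dyadic chaining, and which needs Lemma~\ref{lem:kernel_simple_fact}~(B) to commute $\partial_\sIR$ past the $\sIR$-dependent kernel $K_\sIR$ (a point you omit, but minor).

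The genuine gap is your Step~3, which you yourself flag as the technical heart but do not resolve. Your moment bounds are for $Z^{s,r}_{\uv,\sIR}(x_0,\varphi)$ with a \emph{fixed} bounded $\varphi$, while $\|\cdot\|_{\cV^m}$ requires the supremum over the unit ball of test functions (equivalently the $L^1_y$-norm at each $x_0$), and neither of your two proposed fixes works as stated: taking a countable dense family of $\varphi$'s does not allow you to exchange the supremum with the expectation, and the ``polynomially many test functions'' reduction is unjustified — the relevant sign patterns at scale $[\sIR]$ are exponentially many, so a union bound is useless, and the Fubini variant you mention needs pointwise-in-$y$ moment bounds (with the attendant $[\sIR]^{-dm}$ loss), not the fixed-$\varphi$ moments of Step~1. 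The ingredient you never invoke, and which is indispensable here, is the near-diagonal support of the effective force coefficients (Lemma~\ref{lem:support}): the paper proves pointwise-in-$(x,y)$ moment bounds via Exercise~\ref{ex:kolmogorov_norms}, uses Fubini together with the support property (implemented against the non-compactly supported kernels through the exponential-decay argument of Lemma~\ref{lem:supposrt_V_Kmu}) so that the $y$-integration only costs $[\sIR]^{dm}$ and restores the exponent, passes from $L^n(\bT\times\bM^m)$ to $L^\infty$ by the Young-inequality estimate of Lemma~\ref{lem:expectation_sup} (exploiting the spare factor of $\tilde K_\sIR$ in $K_\sIR=\tilde K_\sIR\ast\tilde K_\sIR\ast\tilde K_\sIR$, which is where the $d/n<\varepsilon$ condition is spent, in place of your Kolmogorov chaining in $x_0$), and only then converts $L^\infty_y$ into the $\cV^m$-norm, again via the support property. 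Without supplying this localization mechanism, your argument cannot produce the $\cV^m$ bound from the information you have established.
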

\begin{proof}
	Since \(F^{i,m}_{\kappa,\mu}=0\) for \(m>3i\), we assume that \(m\leq 3i\). By the moment--cumulant formula~\eqref{eq:expectation_cumulants}, the
	\(n\)-th moment is a sum over partitions of \(\{1,\ldots,n\}\). For a block
	\(B\) of cardinality \(k\), we use the assumption with the list
	\(\vI(k,s,r)\). Since
	\[
	\varrho_\varepsilon(\vI(k,s,r))=k\varrho_\varepsilon(i,m),
	\qquad
	s(\vI(k,s,r))=ks,
	\]
	Exercise~\ref{ex:kolmogorov_norms} gives a contribution bounded by
	\[
	[\kappa]^{k(\varepsilon-\sigma)r}
	[\mu]^{k(\varrho_\varepsilon(i,m)-\sigma s-dm)} .
	\]
	Multiplying over the blocks of the partition and summing over the finitely many
	partitions, we obtain
	\begin{equation}
		\bE\Bigl[
		\bigl(
		\tilde K^{\otimes(1+m)}_\mu\ast \tilde K^{\otimes(1+m)}_\mu \ast \partial_\mu^s\partial_\kappa^r F^{i,m}_{\kappa,\mu}(x,y_1,\ldots,y_m)
		\bigr)^n
		\Bigr]
		\lesssim [\kappa]^{n(\varepsilon-\sigma)r}[\mu]^{n(\varrho_\varepsilon(i,m)-\sigma s-dm)}
	\end{equation}
	uniformly in $\kappa\in(0,1]$ and $\mu\in(0,1/2]$ and $x,y_1,\ldots,y_m\in\bM$. Using Fubini's theorem and the argument from the proof of Lemma~\ref{lem:support_V_Kmu} one shows that
	\begin{equation}
		\bE\|\tilde K^{\otimes(1+m)}_\mu\ast \tilde K^{\otimes(1+m)}_\mu \ast \partial_\mu^s\partial_\kappa^r F^{i,m}_{\kappa,\mu}\|^n_{L^n(\bT\times\bM^m)}
		\lesssim [\kappa]^{n(\varepsilon-\sigma)r}[\mu]^{n(\varrho_\varepsilon(i,m)-\sigma s-dm)+d m}.
	\end{equation}
	Taking into account the fact that $K_\mu=\tilde K_\mu\ast\tilde K_\mu\ast\tilde K_\mu$ we conclude by Lemma~\ref{lem:expectation_sup} that
	\begin{equation}
		\bE\|K^{\otimes(1+m)}_\mu \ast \partial_\mu^s\partial_\kappa^r  F^{i,m}_{\kappa,\mu}\|^n_{L^\infty(\bT\times\bM^m)}
		\lesssim [\kappa]^{n(\varepsilon-\sigma)r}[\mu]^{n(\varrho_\varepsilon(i,m)-\sigma s-dm)-d}.
	\end{equation}
	Employing again the strategy from the proof of Lemma~\ref{lem:support_V_Kmu} we obtain
	\begin{equation}
		\bE\|K^{\otimes(1+m)}_\mu \ast \partial_\mu^s\partial_\kappa^r  F^{i,m}_{\kappa,\mu}\|^n_{\cV^m}
		\lesssim [\kappa]^{n(\varepsilon-\sigma)r}[\mu]^{n(\varrho_\varepsilon(i,m)-\sigma s)-d}.
	\end{equation}
	Next, we show that
	\begin{equation}
		\bE\|\partial_\mu^s\partial_\kappa^r(K^{\otimes(1+m)}_\mu \ast F^{i,m}_{\kappa,\mu})\|^n_{\cV^m}
		\lesssim [\kappa]^{n(\varepsilon-\sigma)r}[\mu]^{n(\varrho_\varepsilon(i,m)-\sigma s)-d}.
	\end{equation}
	For \(s=1\), the derivative
	\[
	\partial_\mu\bigl(K_\mu^{\otimes(1+m)}\ast F^{i,m}_{\kappa,\mu}\bigr)
	\]
	is a finite sum of terms in which either \(\partial_\mu\) falls on
	\(F^{i,m}_{\kappa,\mu}\), or on one of the \(K_\mu\)-factors. The first type
	has already been estimated. The second type is bounded in the same way using
	Lemma~\ref{lem:kernel_simple_fact}~(B), which gives the additional factor
	\([\mu]^{-\sigma}\).
	Finally, we apply Lemma~\ref{lem:probabilistic_estimate} to the family
	\[
	\zeta_{\kappa,2\mu}
	:=
	K_\mu^{\otimes(1+m)}\ast F^{i,m}_{\kappa,\mu},
	\qquad \mu\in(0,1/2],
	\]
	with
	\[
	\rho:=\varrho_\varepsilon(i,m)-2\varepsilon<0.
	\]
	The preceding estimate gives, after taking \(n\)-th roots,
	\[
	\mathbb E
	\Bigl[
	\left\|
	\partial_\mu^s\partial_\kappa^r
	\left(
	K_\mu^{\otimes(1+m)}
	\ast F^{i,m}_{\kappa,\mu}
	\right)
	\right\|_{\cV^m}^n
	\Bigr]^{1/n}
	\lesssim
	[\kappa]^{(\varepsilon-\sigma)r}
	[\mu]^{\varrho_\varepsilon(i,m)-\sigma s-d/n}.
	\]
	Since \(d/n<\varepsilon\), this is bounded by
	\[
	[\kappa]^{(\varepsilon-\sigma)r}
	[\mu]^{\rho-\sigma s+\varepsilon s},
	\]
	for \(s,r\in\{0,1\}\). Thus the assumptions of
	Lemma~\ref{lem:probabilistic_estimate} are satisfied. We obtain a random
	variable \(R\) with \(\mathbb E R^n<\infty\) such that
	\[
	\left\|
	K_\mu^{\otimes(1+m)}
	\ast F^{i,m}_{\kappa,\mu}
	\right\|_{\cV^m}
	\leq
	R\,[\mu]^\rho .
	\]
	Recall that \(m\leq 3i\). Hence,
	\[
	\rho-\varrho_{3\varepsilon}(i,m)
	=
	2\varepsilon(3i-m)
	\geq0.
	\]
	Since \([\mu]\leq1\), the desired bound with exponent
	\(\varrho_{3\varepsilon}(i,m)\) follows.
\end{proof}

\begin{ex}[$\spadesuit$]\label{ex:kolmogorov_norms}
	Let $n\in\bN_+$, $\MM=(m_1,\ldots,m_n)\in\bN_0^n$ and $m=m_1+\ldots+m_n$. Show that it holds
	\begin{equation}
		\|\tilde K^{\otimes(n+m)}_\mu\ast V\|_{L^\infty(\bM^{n+m})}
		\lesssim 
		[\mu]^{-d(n+m-1)}\,\|V\|_{\cE^\MM}
	\end{equation}
	uniformly in $\mu\in(0,1]$ and $V\in\cE^\MM$. Hint: Use Lemma~\ref{lem:kernel_simple_fact}~(C) and~(D) with $p=\infty$.
\end{ex}

\begin{lem}[$\spadesuit$]\label{lem:expectation_sup}
	Let $n\in2\bN_+$, $m\in\bN_0$. There exists a constant $C>0$ such that for all random fields $\zeta\in L^\infty(\bT\times\bM^m)$ and $\mu\in(0,1]$ it holds
	\begin{equation}
		\bE
		\|\tilde K_\mu^{\otimes(1+m)} \ast \zeta\|^n_{L^\infty(\bT\times\bM^m)}
		\leq C\, [\mu]^{-d(1+m)}\,
		\bE\|\zeta\|^n_{L^n(\bT\times\bM^m)}.
	\end{equation}
\end{lem}
\begin{proof}
	Note that 
	\begin{equation}
		\tilde K_\mu^{\otimes(1+m)} \ast \zeta
		=(\fT \tilde K_\mu\otimes\tilde K_\mu^{\otimes m})\star \zeta
	\end{equation}
	where $\star$ is the convolution in $\bT\times\bM^m$ and $\fT \tilde K_\mu$ is the periodization of $\tilde K_\mu$ (see Definition~\ref{dfn:periodization}). Using Young's inequality for convolutions, we obtain
	\begin{equation}
		\bE\|\tilde K_\mu^{\otimes(1+m)}\ast \zeta\|^n_{L^\infty(\bT\times\bM^m)}
		\leq
		\|\fT \tilde K_\mu\|_{L^{n/(n-1)}(\bT)}^n\,
		\|\tilde K_\mu\|_{L^{n/(n-1)}(\bM)}^{mn}\,
		\bE\|\zeta\|^n_{L^n(\bT\times\bM^m)}.
	\end{equation} 
	The lemma follows now from Lemma~\ref{lem:kernel_simple_fact}~(C),~(D).
\end{proof}

\begin{lem}\label{lem:probabilistic_estimate}
	Fix \( n \in \mathbb{N}_+ \), \( m \in \mathbb{N}_0 \) and \(\varepsilon>0\). There exists a constant \( c > 0 \) such that the following holds. Let \( \zeta : (0,1]^2 \to \mathcal{V}^m \) be a random function, differentiable in both variables. Suppose that there exist constants \( C > 0 \) and \( \rho \le 0 \) such that for all \( r,s \in \{0,1\} \) and all \( \kappa, \mu \in (0,1] \),
	\[
	\mathbb{E}\Bigl[\bigl\| \partial_{\mu}^s \partial_{\kappa}^r \zeta_{\kappa,\mu} \bigr\|_{\mathcal{V}^m}^n\Bigr]^{1/n}
	\le C \, [\kappa]^{-\sigma r + \varepsilon r} \, [\mu]^{\rho - \sigma s + \varepsilon s}.
	\]
	Then,
	\[
	\mathbb{E}\Biggl[ \sup_{\kappa,\mu \in (0,1]} 
	[\mu]^{-n\rho} \, \|\zeta_{\kappa,\mu}\|_{\mathcal{V}^m}^n \Biggr]^{1/n}
	\le c \, C.
	\]
\end{lem}
\begin{proof}
	We write
	\begin{equation}
		\zeta_{\kappa,\mu}=
		\zeta_{1,1}
		-
		\int_\mu^1 \partial_\eta\zeta_{1,\eta}\,\rd\eta
		-
		\int_\kappa^1 \partial_\nu\zeta_{\nu,1}\,\rd\nu
		+
		\int_\mu^1 \int_\kappa^1 \partial_\eta\partial_\nu \zeta_{\nu,\eta} \,\rd\nu\rd\eta.
	\end{equation}
	Using the fact that $\rho\leq0$, we obtain
	\begin{multline}
		[\mu]^{-\rho}\,\|\zeta_{\kappa,\mu}\|_{\cV^m}
		\leq
		\|\zeta_{1,1}\|_{\cV^m}
		+
		\int_\mu^1 [\eta]^{-\rho}\,\|\partial_\eta\zeta_{1,\eta}\|_{\cV^m}\,\rd\eta
		+
		\int_\kappa^1 \|\partial_\nu\zeta_{\nu,1}\|_{\cV^m}\,\rd\nu
		\\
		+
		\int_\mu^1 \int_\kappa^1 [\eta]^{-\rho}\,\|\partial_\eta\partial_\nu \zeta_{\nu,\eta}\|_{\cV^m} \,\rd\nu\rd\eta.
	\end{multline}
	Applying Minkowski's inequality to the $L^n$ norm over the probability space and using the assumed bound for $\mathbb{E}\bigl[\bigl\| \partial_{\mu}^s \partial_{\kappa}^r \zeta_{\kappa,\mu} \bigr\|_{\mathcal{V}^m}^n\bigr]^{1/n}$, we obtain
	\begin{multline}
		\mathbb{E}\Biggl[ \sup_{\kappa,\mu \in (0,1]} 
		[\mu]^{-n\rho} \, \|\zeta_{\kappa,\mu}\|_{\mathcal{V}^m}^n \Biggr]^{1/n}
		\leq C
		+
		\int_0^1 C\,[\eta]^{-\sigma+\varepsilon}\,\rd\eta
		+
		\int_0^1 C\,[\nu]^{-\sigma+\varepsilon}\,\rd\nu
		\\
		+
		\int_0^1 \int_0^1 C\,[\nu]^{-\sigma+\varepsilon}
		[\eta]^{-\sigma+\varepsilon}
		\,\rd\nu\rd\eta.
	\end{multline}
	This proves the desired bound.
\end{proof}

\begin{ex}[$\spadesuit$]\label{ex:convergence}
	Under the assumptions of Lemma~\ref{lem:probabilistic_bounds}, prove that
	there exists a random family
	\[
	(F^{i,m}_{0,\mu})_{\mu\in(0,1/2]}
	\subset \sS'(\bM^{1+m})
	\]
	such that
	\begin{equation}
		\lim_{\kappa\searrow0}
		\sup_{\mu\in(0,1/2]}
		[\mu]^{-\varrho_{3\varepsilon}(i,m)}
		\|K_\mu^{\otimes(1+m)}
		\ast (F^{i,m}_{0,\mu}-F^{i,m}_{\kappa,\mu})\|_{\cV^m}
		=0
	\end{equation}
	almost surely. Hint: Use the argument from the proof of Lemma~\ref{lem:probabilistic_bounds}.
\end{ex}

\begin{rem}[$\spadesuit$]
	Using the result stated in the above exercise and the fact that, for every
	\(\kappa\in(0,1]\), the families of kernels
	\[
	\Bigl(
	(0,1/2]\ni\mu\longmapsto F^{i,m}_{\kappa,\mu}\in\cV^m
	\Bigr)_{
		i\in\{0,\ldots,i_\flat\},\,
		m\in\bN_0
	}
	\]
	satisfy the flow equation~\eqref{eq:flow_deterministic_i_m}, one verifies that
	the limiting families
	\[
	\Bigl(
	(0,1/2]\ni\mu\longmapsto F^{i,m}_{0,\mu}\in\sS'(\bM^{1+m})
	\Bigr)_{
		i\in\{0,\ldots,i_\flat\},\,
		m\in\bN_0
	}
	\]
	are almost surely continuously differentiable in \(\mu\) and satisfy the same
	flow equation.
\end{rem}

\section{Relation to original equation and convergence}\label{sec:app}

The purpose of this section is to collect several auxiliary results needed
to pass from the scale-dependent fixed point constructed in Section~\ref{sec:effective_equation} to a solution of the original equation, and then to identify its
limit as $\kappa\searrow0$. We first show that, in the presence of the
regularization, any bounded fixed point of the scale-dependent effective
equation gives rise to a classical solution of the original regularized equation. We then
verify the required boundedness property for the fixed point of the
regularized equation. Finally, we prove a stability statement for the fixed
points with respect to the regularization parameter $\kappa$, which yields
convergence of the corresponding solutions in $\sC^\alpha(\bM)$ and completes
the proof of Theorem~\ref{thm:main}.

The lemma below specifies the conditions under which a fixed point of the map $\fQ$, introduced in Lemma~\ref{lem:lift}, corresponds to a solution of $\varPhi_\kappa=G\ast F_\kappa[\varPhi_\kappa]$ for $\kappa\in(0,1]$.
\begin{lem}[$\spadesuit$]\label{lem:classical}
	Fix $\kappa\in(0,1]$. Assume that the family of functionals $(F_{\kappa,\mu})_{\mu\in[0,1]}$ of polynomial type depends continuously on $\mu\in[0,1]$ and is piecewise continuously differentiable in $\mu$ for $\mu\in(0,1]$. Moreover, suppose that $F_{\kappa,0}=F_\kappa$ and for some $R\in[1,\infty)$ and $m_\flat\in\bN_0$ it holds
	\begin{equation}\label{eq:F_uv_bounds}
		\| \rD^k (\partial_\mu^s F_{\kappa,\mu})[\varphi]\cdot \psi^{\otimes k}\,\|
		\leq 
		R\,\|\psi\|^k\,(1+\|\varphi\|)^{m_\flat}
	\end{equation}
	for all $k\in\bN_0$, $s\in\{0,1\}$, $\mu\in[0,1]$, $\varphi,\psi\in C(\bT)$. Let the family of functionals $(H_{\kappa,\mu})_{\mu\in(0,1]}$ be defined by~\eqref{eq:def_H}. If a continuous and bounded function
	\begin{equation}\label{eq:classical_fixed_point}
		(0,1]\ni\mu\mapsto(\tilde\varPhi_{\kappa,\mu},\tilde\zeta_{\kappa,\mu})\in C(\bT)\times C(\bT)
	\end{equation}
	is the fixed point of the map $\fQ_\kappa$ defined by~\eqref{eq:map_Q} in terms of 
	$\tilde F_{\mu}[\varphi]:= K_\mu\ast F_{\kappa,\mu}[K_\mu\ast\varphi]$ and $\tilde H_{\mu}[\varphi]:=
	K_\mu\ast H_{\kappa,\mu}[K_\mu\ast\varphi]$, then the limit $\lim_{\mu\searrow0}K_\mu\ast\tilde\varPhi_{\kappa,\mu}=:\varPhi_\kappa$ exists in $C(\bT)$ and satisfies the equation $\varPhi_\kappa=G\ast F_\kappa[\varPhi_\kappa]$.
\end{lem}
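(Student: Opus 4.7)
I aim to reverse the heuristic derivation of the system~\eqref{eq:effective} from $\varPhi_\uv=G\ast F_\uv[\varPhi_\uv]$. Given the fixed point $(\tilde\varPhi_{\uv,\Cdot},\tilde\zeta_{\uv,\Cdot})$, I set $\varPhi_{\uv,\sIR}:=K_\sIR\ast\tilde\varPhi_{\uv,\sIR}$ and, after verifying regularity, $\zeta_{\uv,\sIR}:=\fP_\sIR\tilde\zeta_{\uv,\sIR}$. The targets are: (i) this pair satisfies~\eqref{eq:effective}; (ii) $\lim_{\sIR\searrow0}\varPhi_{\uv,\sIR}=:\varPhi_\uv$ exists in $C(\bT)$ and $\lim_{\sIR\searrow0}\zeta_{\uv,\sIR}=0$ in $C(\bT)$. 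Once these are in hand, differentiating $F_{\uv,\sIR}[\varPhi_{\uv,\sIR}]+\zeta_{\uv,\sIR}$ and substituting both equations of~\eqref{eq:effective} leaves only $\partial_\sIR F_{\uv,\sIR}[\varPhi_{\uv,\sIR}]+\rD F_{\uv,\sIR}[\varPhi_{\uv,\sIR}]\cdot(\dot G_\sIR\ast F_{\uv,\sIR}[\varPhi_{\uv,\sIR}])-H_{\uv,\sIR}[\varPhi_{\uv,\sIR}]=0$ by the definition~\eqref{eq:def_H} of $H_{\uv,\sIR}$. Continuity of $(\sIR,\varphi)\mapsto F_{\uv,\sIR}[\varphi]$ (in $\sIR$ by assumption, in $\varphi$ thanks to~\eqref{eq:F_uv_bounds}) then identifies the constant value as $F_\uv[\varPhi_\uv]$. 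Passing to the limit $\sIR\searrow0$ in the first equation of~\eqref{eq:effective} and using $-\int_0^1\dot G_\uIR\,\rd\uIR=G_0-G_1=G$ yields $\varPhi_\uv=G\ast F_\uv[\varPhi_\uv]$.

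\textbf{Rewriting the first equation.} Convolving the first fixed point equation with $K_\sIR$ and using $K_\sIR\ast K_{\uIR,\sIR}=K_\uIR$ (from $K_\uIR=K_{\uIR,\sIR}\ast K_\sIR$, valid for $\uIR\geq\sIR$ by Exercise~\ref{ex:K_mu}~(3)), $K_\uIR\ast\tilde G_\uIR\ast K_\uIR=\dot G_\uIR$ (via $\tilde G_\uIR=\fP_\uIR^2\dot G_\uIR$ and $\fP_\uIR K_\uIR=\delta_0$), and $\tilde F_{\uIR}[\tilde\varPhi_{\uv,\uIR}]=K_\uIR\ast F_{\uv,\uIR}[\varPhi_{\uv,\uIR}]$ produces
\begin{equation*}
 \varPhi_{\uv,\sIR}=-\int_\sIR^1\dot G_\uIR\ast F_{\uv,\uIR}[\varPhi_{\uv,\uIR}]\,\rd\uIR-\int_\sIR^1 K_\uIR\ast\tilde G_\uIR\ast\tilde\zeta_{\uv,\uIR}\,\rd\uIR.
\end{equation*}
By Lemma~\ref{lem:kernel_G} both $\|\dot G_\uIR\|_\cK$ and $\|\tilde G_\uIR\|_\cK$ are bounded uniformly in $\uIR$; combined with~\eqref{eq:F_uv_bounds} and $\sup_{\uIR\in(0,1]}(\|\tilde\varPhi_{\uv,\uIR}\|\vee\|\tilde\zeta_{\uv,\uIR}\|)<\infty$, both integrands are uniformly bounded in $C(\bT)$. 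The limit $\sIR\searrow0$ therefore exists in $C(\bT)$ and defines $\varPhi_\uv$.

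\textbf{Constructing $\zeta$ and the second equation.} Each summand of the integrand in the second fixed point equation is of the form $K_\uIR\ast Y_{\uv,\uIR}$ with $Y_{\uv,\uIR}\in C(\bT)$, so $\fP_\uIR$ acts on it distributionally and produces a continuous function via $\fP_\uIR K_\uIR=\delta_0$. Applying $\fP_\sIR K_\sIR=\delta_0$ to $K_\sIR=K_{\sIR,\uIR}\ast K_\uIR$ gives $\fP_\sIR K_{\sIR,\uIR}\ast K_\uIR=\delta_0=\fP_\uIR K_\uIR$, hence $\fP_\sIR K_{\sIR,\uIR}=\fP_\uIR\delta_0$ as distributions (uniqueness from the non-vanishing Fourier transform of $K_\uIR$). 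Commuting $\fP_\sIR$ past the $\uIR$-integral (legitimate since it acts only spatially and is independent of $\uIR$) and using $K_\uIR\ast\tilde G_\uIR=\fP_\uIR\dot G_\uIR$ together with the definition $\zeta_{\uv,\uIR}:=\fP_\uIR\tilde\zeta_{\uv,\uIR}$ yields
\begin{equation*}
 \zeta_{\uv,\sIR}=-\int_0^\sIR\!\big(H_{\uv,\uIR}[\varPhi_{\uv,\uIR}]+\rD F_{\uv,\uIR}[\varPhi_{\uv,\uIR}]\cdot(\dot G_\uIR\ast\zeta_{\uv,\uIR})\big)\,\rd\uIR,
\end{equation*}
i.e.\ the second equation of~\eqref{eq:effective}. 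Uniform boundedness of the integrand (Lemma~\ref{lem:kernel_G} and~\eqref{eq:F_uv_bounds}) gives $\|\zeta_{\uv,\sIR}\|\lesssim[\sIR]^{\sigma}$, so $\zeta_{\uv,\sIR}\to 0$ in $C(\bT)$. Substituting $K_\uIR\ast\tilde G_\uIR\ast\tilde\zeta_{\uv,\uIR}=\dot G_\uIR\ast\zeta_{\uv,\uIR}$ into the display for $\varPhi_{\uv,\sIR}$ recovers the first equation of~\eqref{eq:effective}, after which the constancy argument from the first paragraph closes the proof.

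\textbf{Main obstacle.} The only delicate point is making rigorous the definition $\zeta_{\uv,\sIR}:=\fP_\sIR\tilde\zeta_{\uv,\sIR}$, since $\fP_\sIR$ is a differential operator of order $6(d+2)$ while $\tilde\zeta_{\uv,\sIR}$ is merely continuous. The resolution is that $\tilde\zeta_{\uv,\sIR}$ is presented as an integral whose integrand $K_{\sIR,\uIR}\ast X_{\uv,\uIR}$ factors through the smoothing kernel $K_\uIR$ inside $X_{\uv,\uIR}$; the identities $\fP_\sIR K_{\sIR,\uIR}=\fP_\uIR\delta_0$ and $\fP_\uIR K_\uIR=\delta_0$ absorb all spatial derivatives, so $\fP_\sIR$ is never actually applied to an object of limited regularity, and the resulting $\zeta_{\uv,\sIR}$ is automatically continuous.
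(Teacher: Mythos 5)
Your proposal is correct and follows essentially the same route as the paper's sketch: you undo the tilde-transform via $K_\sIR\ast\Cdot$ and $\fP_\sIR\Cdot$ to recover the system~\eqref{eq:effective}, verify the limits at $\sIR=0$, show $\partial_\sIR(F_{\uv,\sIR}[\varPhi_{\uv,\sIR}]+\zeta_{\uv,\sIR})=0$ using~\eqref{eq:def_H}, and pass to the limit in the first equation. The explicit identity $\fP_\sIR K_{\sIR,\uIR}=\fP_\uIR\delta_0$ and the ensuing observation that $\fP_\sIR$ never actually hits a merely continuous object is a clean way of filling in the detail the paper leaves implicit.
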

\begin{proof}[Proof sketch]
	Noting that $\fP_\mu K_\mu=\delta_0$ and $K_{\eta,\mu}\ast K_\mu=K_\eta$ we show that
	\begin{equation}
		(0,1]\ni\mu\mapsto(\varPhi_{\kappa,\mu},\zeta_{\kappa,\mu}):=(K_\mu\ast\tilde \varPhi_{\kappa,\mu},\fP_\mu\tilde\zeta_{\kappa,\mu})\in C(\bT)\times \sS'(\bM)
	\end{equation}
	satisfies the system of equations
	\begin{equation}\label{eq:effective_proof}
		\begin{cases}\displaystyle
			\varPhi_{\kappa,\mu} = -\int_\mu^1 \dot G_\eta \ast (F_{\kappa,\eta}[\varPhi_{\kappa,\eta}]+\zeta_{\kappa,\eta})\,\rd \eta
			\\
			\displaystyle
			\zeta_{\kappa,\mu} = -\int_0^\mu (H_{\kappa,\eta}[\varPhi_{\kappa,\eta}] + \rD F_{\kappa,\eta}[\varPhi_{\kappa,\eta}] \cdot (\dot G_\eta\ast\zeta_{\kappa,\eta}))\,\rd \eta.
		\end{cases}
	\end{equation}
	Using the assumptions about the effective force and the fixed point we show that the integrands above are continuous and bounded. Hence, we conclude that
	\begin{equation}
		(0,1]\ni\mu\mapsto(\varPhi_{\kappa,\mu},\zeta_{\kappa,\mu})\in C(\bT)\times C(\bT)
	\end{equation}
	and the above function is bounded, differentiable and has a limit at $\mu=0$. Next, we show that $\partial_\eta (F_{\kappa,\eta}[\varPhi_{\kappa,\eta}]+\zeta_{\kappa,\eta})=0$ by following the argument from the beginning of Section~\ref{sec:effective_equation}. As a~result, we obtain $F_{\kappa,\eta}[\varPhi_{\kappa,\eta}]+\zeta_{\kappa,\eta}=F_\kappa[\varPhi_\kappa]$. Consequently, the first of the equations~\eqref{eq:effective_proof} implies that $\varPhi_\kappa$ satisfies the equation $\varPhi_\kappa=G\ast F_\kappa[\varPhi_\kappa]$.
\end{proof}

In order to use the above lemma one has to verify that for $\kappa\in(0,1]$ the unique fixed point $(\tilde\varPhi_{\kappa,\Cdot},\tilde\zeta_{\kappa,\Cdot})$ of the map $\fQ_\kappa\,:\,\sB_R\to\sB_R$ constructed in Lemma~\ref{lem:lift} is such that the map~\eqref{eq:classical_fixed_point} is bounded. The proof of this fact is the subject of the following lemma.

\begin{lem}[$\spadesuit$]\label{lem:solution_bounded}
	Fix $\kappa\in(0,1]$ and assume that
	\[
	\|\xi_\kappa\|\lesssim 1,
	\qquad
	|c^{(i)}_\kappa|\lesssim 1,
	\qquad i\in\{1,\ldots,i_\sharp\}.
	\]
	Then the coefficients generated by the flow equation
	\eqref{eq:flow_deterministic_i_m} satisfy
	\begin{equation}
		\label{eq:F_im_uniform_bound}
		\|F^{i,m}_{\kappa,\mu}\|_{\cV^m}
		\lesssim
		1\wedge [\mu]^{\sigma(m-3)/2},
	\end{equation}
	uniformly in $\mu\in(0,1]$, for every
	$i\in\{0,\ldots,i_\flat\}$ and
	$m\in\{0,\ldots,3i\}$. Consequently, there exists $R\in[1,\infty)$ such that, for every
	$\delta\in[-\sigma/2,0]$ and every $\varphi\in C(\bT)$,
	\begin{equation}
		\label{eq:F_gamma}
		[\mu]^{-3\delta}
		\|F_{\kappa,\mu}[\varphi]\|
		\leq
		R
		\bigl(
		1+[\mu]^{-\delta}\|\varphi\|
		)^{3i_\flat},
	\end{equation}
	where $F_{\kappa,\Cdot}$ denotes the effective force defined in
	\eqref{eq:stopped_eff_force}.
	
	Let $(\tilde\varPhi_{\kappa,\Cdot},\tilde\zeta_{\kappa,\Cdot})\in\sB_R$
	be the unique fixed point of $\fQ_\kappa$ constructed in
	Lemma~\ref{lem:lift}, applied with
	$\alpha\in(-\sigma/2,\sigma-d/2)$. Then the map
	\[
	\tilde\varPhi_{\kappa,\Cdot}:(0,1]\to C(\bT)
	\]
	is bounded.
\end{lem}

\begin{proof}
	We first prove \eqref{eq:F_im_uniform_bound}. The argument proceeds by
	induction along the hierarchy generated by the flow equation
	\eqref{eq:flow_deterministic_i_m}. The initial terms are bounded by the
	assumptions on $\xi_\kappa$ and on the constants $c^{(i)}_\kappa$. Assume that the desired estimate has already been obtained for all
	lower-order terms appearing on the right-hand side of
	\eqref{eq:flow_deterministic_i_m}. Using the bound from
	Exercise~\ref{ex:fB1_bound}, we obtain
	\begin{align}
		\|\partial_\mu F^{i,m}_{\kappa,\mu}\|_{\cV^m}
		&\lesssim
		\sum_{k=0}^m
		\bigl(
		1\wedge [\mu]^{\sigma(k+1-3)/2}
		)
		\bigl(
		1\wedge [\mu]^{\sigma(m-k-3)/2}
		)
		\nonumber \\
		&\lesssim
		1\wedge [\mu]^{\sigma(m-3)/2-\sigma}.
	\end{align}
	Integrating this estimate in the scale variable gives
	\begin{align}
		\|F^{i,m}_{\kappa,\mu}\|_{\cV^m}
		&\leq
		\|F^{i,m}_{\kappa,0}\|_{\cV^m}
		+
		\int_0^\mu
		\|\partial_\eta F^{i,m}_{\kappa,\eta}\|_{\cV^m}
		\,\rd\eta
		\nonumber \\
		&\lesssim
		1\wedge [\mu]^{\sigma(m-3)/2}.
	\end{align}
	This proves \eqref{eq:F_im_uniform_bound} uniformly in
	$\mu\in(0,1]$.
	
	The estimate \eqref{eq:F_gamma} follows by inserting
	\eqref{eq:F_im_uniform_bound} into the definition
	\eqref{eq:stopped_eff_force} of the effective force. Since only
	finitely many coefficients occur, with
	$i\leq i_\flat$ and $m\leq 3i$, all constants may be absorbed into a
	single constant $R\in[1,\infty)$.
	
	It remains to prove boundedness of the fixed point. Since
	$(\tilde\varPhi_{\kappa,\Cdot},\tilde\zeta_{\kappa,\Cdot})\in\sB_R$, we have
	\begin{equation}
		\|\tilde\varPhi_{\kappa,\mu}\|
		\lesssim [\mu]^\alpha,
		\qquad
		\|\tilde\zeta_{\kappa,\mu}\|
		\lesssim [\mu]^\beta,
	\end{equation}
	with $\alpha\in(-\sigma/2,\sigma-d/2)$ and $\beta>0$. Using the
	fixed-point equation, together with \eqref{eq:F_gamma} applied with
	$\delta=\alpha$, yields
	\begin{equation}
		\|\tilde\varPhi_{\kappa,\mu}\|
		\lesssim
		1+
		\int_\mu^1
		[\eta]^{3\alpha}
		\,\rd\eta
		\lesssim
		1\vee [\mu]^{\sigma+3\alpha}=1\vee[\mu]^{\alpha_1},
	\end{equation}
	where
	\[
	\alpha_0:=\alpha,
	\qquad
	\alpha_1:=\sigma+3\alpha_0.
	\]
	Then $\alpha_1>\alpha_0$. If $\alpha_1\geq0$, the preceding estimate
	already gives a uniform bound on
	$\|\tilde\varPhi_{\kappa,\mu}\|$ for $\mu\in(0,1]$. If $\alpha_1<0$, we repeat the same argument, now using
	\eqref{eq:F_gamma} with $\delta=\alpha_1$. This gives
	\[
	\|\tilde\varPhi_{\kappa,\mu}\|
	\lesssim
	1\vee [\mu]^{\alpha_2},
	\qquad
	\alpha_2:=\sigma+3\alpha_1.
	\]
	Iterating this procedure produces a sequence
	\[
	\alpha_{n+1}:=\sigma+3\alpha_n,
	\qquad n\geq0,
	\]
	as long as $\alpha_n<0$. Since each step improves the exponent, after
	finitely many iterations one obtains an exponent $\alpha_n\geq0$.
	Hence
	\[
	\sup_{\mu\in(0,1]}
	\|\tilde\varPhi_{\kappa,\mu}\|_{C(\bT)}
	<\infty,
	\]
	which finishes the proof.
\end{proof}

To conclude the proof of Theorem~\ref{thm:main}, we use the following lemma, which establishes the existence of the limit as $\kappa\searrow0$.

\begin{lem}[$\spadesuit$]\label{lem:fixed_convergence}
	For $\kappa\in[0,1]$ let 
	\begin{equation}\label{eq:ex_convergence}
		(\tilde F_\mu)_{\mu\in(0,1]}\equiv (\tilde F_{\kappa,\mu})_{\mu\in(0,1]},\qquad (\tilde H_\mu)_{\mu\in(0,1]}\equiv (\tilde H_{\kappa,\mu})_{\mu\in(0,1]}
	\end{equation}
	be families of functionals such that:
	\begin{itemize}
		\item[(1)] for all $\kappa\in(0,1]$ the assumptions of Lemma~\ref{lem:classical} are satisfied,
		\item[(2)] for all $\kappa\in[0,1]$ the assumptions of Lemma~\ref{lem:lift} are satisfied,
		\item[(3)] for all $\kappa\in[0,1]$ there exists $r_\kappa\in\bR$ such that
		\begin{equation}
			\begin{aligned}
				[\mu]^{\sigma-\alpha}\,\|\rD^k (\tilde F_{0,\mu}&-\tilde F_{\kappa,\mu})[\varphi]\cdot \psi^{\otimes k}\|
				\\&\leq 
				r_\kappa\,(|\lambda|^{1/3}[\mu]^{-\alpha}\,\|\psi\|)^k\,(1/2+|\lambda|^{1/3}[\mu]^{-\alpha}\,\|\varphi\|)^{m_\flat},
			\end{aligned} 
		\end{equation}
		and
		\begin{equation}
			[\mu]^{\sigma-\beta}\,\|(\tilde H_{0,\mu}-\tilde H_{\kappa,\mu})[\varphi]\|
			\leq r_\kappa\, (1/2+|\lambda|^{1/3}[\mu]^{-\alpha}\,\|\varphi\|)^{m_\flat}
		\end{equation}
		for all $k\in\{0,1\}$, $\kappa,\mu\in(0,1]$, $\varphi,\psi\in C(\bT)$, $\lambda\in[-1,1]$ and $\lim_{\kappa\searrow0}r_\kappa=0$. 
	\end{itemize}
	
	For $\kappa\in[0,1]$, let $\fQ_\kappa$ be the map defined by
	\eqref{eq:map_Q} in terms of the functionals in
	\eqref{eq:ex_convergence}. By Assumption~(2) and
	Lemma~\ref{lem:lift}, for every
	$\lambda\in[-\lambda_\star,\lambda_\star]$ and every $\kappa\in[0,1]$,
	the map
	\[
	\fQ_\kappa:\sB_R\to\sB_R
	\]
	is well defined and is a contraction with Lipschitz constant strictly
	smaller than $1/2$. Denote its unique fixed point by
	\[
	(\tilde\varPhi_{\kappa,\Cdot},\tilde\zeta_{\kappa,\Cdot})\in\sB_R .
	\]
	For $\kappa\in(0,1]$, let
	\[
	\varPhi_\kappa
	:=
	\lim_{\mu\searrow0}\tilde\varPhi_{\kappa,\mu}
	\in C_\rb(\bM),
	\]
	whose existence follows from Assumption~(1),
	Lemma~\ref{lem:classical} and Lemma~\ref{lem:solution_bounded}.
	Then $\varPhi_\kappa$ solves
	\[
	\varPhi_\kappa=G\ast F_\kappa[\varPhi_\kappa].
	\]
	Moreover, the following conclusions hold.
	\begin{itemize}
		\item[(i)] The fixed points converge in $\sB_R$, namely
		\[
		\lim_{\kappa\searrow0}
		(\tilde\varPhi_{\kappa,\Cdot},\tilde\zeta_{\kappa,\Cdot})
		=
		(\tilde\varPhi_{0,\Cdot},\tilde\zeta_{0,\Cdot}) .
		\]
		
		\item[(ii)] There exists $\varPhi_0\in\sS'(\bM)$ such that
		\begin{equation}
			\lim_{\kappa\searrow0}
			\|\varPhi_\kappa-\varPhi_0\|_{\sC^\alpha(\bM)}
			=
			\lim_{\kappa\searrow0}
			\sup_{\mu\in(0,1]}
			[\mu]^{-\alpha}
			\,
			\|K_\mu\ast(\varPhi_\kappa-\varPhi_0)\|
			=
			0 .
		\end{equation}
	\end{itemize}
\end{lem}

\begin{proof}[Proof sketch]
	We first prove that for every
	$(\tilde\varPhi_{\Cdot},\tilde\zeta_{\Cdot})\in\sB_R$,
	\[
	\lim_{\kappa\searrow0}
	\fQ_\kappa[\tilde\varPhi_{\Cdot},\tilde\zeta_{\Cdot}]
	=
	\fQ_0[\tilde\varPhi_{\Cdot},\tilde\zeta_{\Cdot}]
	\]
	in $\sB_R$. Let
	$(\tilde\varPhi_{\Cdot},\tilde\zeta_{\Cdot})\in\sB_R$ be fixed.
	The definition of $\fQ_\kappa$ in \eqref{eq:map_Q} involves only the
	functionals $\tilde F_{\kappa,\mu}$ and $\tilde H_{\kappa,\mu}$, together
	with scale integrations and convolution operators which are independent
	of $\kappa$. Therefore the difference
	\[
	\fQ_\kappa[\tilde\varPhi_{\Cdot},\tilde\zeta_{\Cdot}]
	-
	\fQ_0[\tilde\varPhi_{\Cdot},\tilde\zeta_{\Cdot}]
	\]
	is controlled by the differences
	$\tilde F_{\kappa,\mu}-\tilde F_{0,\mu}$ and
	$\tilde H_{\kappa,\mu}-\tilde H_{0,\mu}$ evaluated along
	$\tilde\varPhi_{\Cdot}$. Since
	$(\tilde\varPhi_{\Cdot},\tilde\zeta_{\Cdot})\in\sB_R$, the norms of
	$\tilde\varPhi_{\mu}$ and $\tilde\zeta_{\mu}$ have the scale bounds
	prescribed in the definition of $\sB_R$. Inserting these bounds into estimates from Assumption~(3) gives
	\begin{equation}\label{eq:bound_difference_r}
	\bigl\|
	\fQ_\kappa[\tilde\varPhi_{\Cdot},\tilde\zeta_{\Cdot}]
	-
	\fQ_0[\tilde\varPhi_{\Cdot},\tilde\zeta_{\Cdot}]
	\bigr\|_{\sB_R}
	\lesssim r_\kappa
	\end{equation}
	and proves the claim.
	
	We next show convergence of the fixed points. Since
	$(\tilde\varPhi_{\kappa,\Cdot},\tilde\zeta_{\kappa,\Cdot})$ and
	$(\tilde\varPhi_{0,\Cdot},\tilde\zeta_{0,\Cdot})$ are fixed points of
	$\fQ_\kappa$ and $\fQ_0$, respectively, we have
	\begin{align}
		&
		\bigl\|
		(\tilde\varPhi_{\kappa,\Cdot},\tilde\zeta_{\kappa,\Cdot})
		-
		(\tilde\varPhi_{0,\Cdot},\tilde\zeta_{0,\Cdot})
		\bigr\|_{\sB_R}
		\nonumber \\
		&\qquad\leq
		\bigl\|
		\fQ_\kappa[
		\tilde\varPhi_{\kappa,\Cdot},\tilde\zeta_{\kappa,\Cdot}
		]
		-
		\fQ_\kappa[
		\tilde\varPhi_{0,\Cdot},\tilde\zeta_{0,\Cdot}
		]
		\bigr\|_{\sB_R}
		\nonumber \\
		&\qquad\quad+
		\bigl\|
		\fQ_\kappa[
		\tilde\varPhi_{0,\Cdot},\tilde\zeta_{0,\Cdot}
		]
		-
		\fQ_0[
		\tilde\varPhi_{0,\Cdot},\tilde\zeta_{0,\Cdot}
		]
		\bigr\|_{\sB_R}.
	\end{align}
	The first term is bounded by one half of the left-hand side, because
	$\fQ_\kappa$ is a contraction with Lipschitz constant smaller than
	$1/2$. Hence
	\[
	\bigl\|
	(\tilde\varPhi_{\kappa,\Cdot},\tilde\zeta_{\kappa,\Cdot})
	-
	(\tilde\varPhi_{0,\Cdot},\tilde\zeta_{0,\Cdot})
	\bigr\|_{\sB_R}
	\lesssim
	\bigl\|
	\fQ_\kappa[
	\tilde\varPhi_{0,\Cdot},\tilde\zeta_{0,\Cdot}
	]
	-
	\fQ_0[
	\tilde\varPhi_{0,\Cdot},\tilde\zeta_{0,\Cdot}
	]
	\bigr\|_{\sB_R}.
	\]
	The right-hand side tends to zero by~\eqref{eq:bound_difference_r}, proving item~(i).

	It remains to construct the limiting distribution $\varPhi_0$ and to prove convergence in $\sC^\alpha(\bM)$. A simple computation, together with arguments similar to those in Section~\ref{sec:effective_equation}, yields
	\begin{equation}
		\label{eq:regularized_solution_identity}
		K_\mu\ast\varPhi_\kappa
		=
		K_\mu\ast K_\mu\ast\tilde\varPhi_{\kappa,\mu}
		+
		(G-G_\mu)
		\ast
		\bigl(
		\tilde F_{\kappa,\mu}[\tilde\varPhi_{\kappa,\mu}]
		+
		\tilde\zeta_{\kappa,\mu}
		\bigr)
	\end{equation}
	for all $\kappa,\mu\in(0,1]$. We define $\varPhi_0$ by postulating that
	\eqref{eq:regularized_solution_identity} holds for $\kappa=0$, replacing
	$(\tilde\varPhi_{\kappa,\Cdot},\tilde\zeta_{\kappa,\Cdot})$ by the limiting
	fixed point
	$(\tilde\varPhi_{0,\Cdot},\tilde\zeta_{0,\Cdot})$. That is, for each
	$\mu\in(0,1]$, we set
	\[
	K_\mu\ast\varPhi_0
	:=
	K_\mu\ast K_\mu\ast\tilde\varPhi_{0,\mu}
	+
	(G-G_\mu)
	\ast
	\bigl(
	\tilde F_{0,\mu}[\tilde\varPhi_{0,\mu}]
	+
	\tilde\zeta_{0,\mu}
	\bigr).
	\]
	The compatibility of this family in $\mu$ follows from the fixed-point
	equation for $\fQ_0$, hence it defines an element
	$\varPhi_0\in\sS'(\bM)$.
	
	Subtracting the identities for $K_\mu\ast\varPhi_\kappa$ and
	$K_\mu\ast\varPhi_0$ gives two types of terms. The first comes from
	\[
	K_\mu\ast K_\mu
	\ast
	(
	\tilde\varPhi_{\kappa,\mu}
	-
	\tilde\varPhi_{0,\mu}
	),
	\]
	which is controlled directly by the convergence of the fixed points in
	$\sB_R$. The second comes from the remainder term
	\[
	(G-G_\mu)\ast
	\Bigl[
	\tilde F_{\kappa,\mu}[\tilde\varPhi_{\kappa,\mu}]
	-
	\tilde F_{0,\mu}[\tilde\varPhi_{0,\mu}]
	+
	\tilde\zeta_{\kappa,\mu}
	-
	\tilde\zeta_{0,\mu}
	\Bigr].
	\]
	Using
	\[
	\|G-G_\mu\|\lesssim [\mu]^\sigma,
	\]
	Assumptions~(3),
	and the convergence of the fixed points from item~(i), this term is
	also bounded by a quantity tending to zero after multiplication by
	$[\mu]^{-\alpha}$ and taking the supremum over $\mu\in(0,1]$.
	Therefore
	\[
	\lim_{\kappa\searrow0}
	\sup_{\mu\in(0,1]}
	[\mu]^{-\alpha}
	\,
	\|K_\mu\ast(\varPhi_\kappa-\varPhi_0)\|
	=0.
	\]
	This is precisely the convergence
	$\varPhi_\kappa\to\varPhi_0$ in $\sC^\alpha(\bM)$ and proves~(ii).
\end{proof}

\section{Symbolic index}\label{sec:symbolic_index}

{\small
	\begin{center}
		\renewcommand{\arraystretch}{1.1}
		\begin{longtable}{@{}p{0.24\textwidth}p{0.56\textwidth}p{0.12\textwidth}@{}}
			\toprule
			Symbol & Meaning & Page\\
			\midrule
			\endfirsthead
			\toprule
			Symbol & Meaning & Page\\
			\midrule
			\endhead
			\bottomrule
			\endfoot
			\bottomrule
			\endlastfoot
			
			$d$ & Spatial dimension & \pageref{eq:singular_elliptic}\\
			
			$\sigma$ & Order of the fractional elliptic operator & \pageref{eq:singular_elliptic}\\
			
			$\xi$ & Periodisation of the white noise on $\bM$& \pageref{eq:singular_elliptic} \\
			
			$[\kappa]=\kappa^{1/\sigma}$ & Ultraviolet regularization length associated with $\kappa$ & \pageref{eq:mollifier}\\
			
			$\vartheta,\vartheta_\kappa$ & Mollifier and its rescaling at scale $\kappa$ & \pageref{eq:mollifier}\\
			
			$\xi_\kappa$ & Regularized noise $\vartheta_\kappa\ast\xi$ & \pageref{eq:mollifier}\\
			
			$\lambda$ & Coupling constant multiplying the nonlinearity & \pageref{eq:force}\\
			
			$F_\kappa$ & Regularized force functional & \pageref{eq:force}\\
			
			$c_\kappa^{(i)}$ & Mass renormalization constants & \pageref{eq:force}\\
			
			$i_\sharp$ & Largest order of mass counterterms in the original force & \pageref{eq:force}\\
			
			$\lambda_\star$ & Maximal admissible size of $\lambda$ & \pageref{thm:main}\\
			
			$\bM$ & Euclidean spatial domain, identified with $\bR^d$ & \pageref{dfn:basic}\\
			
			$\bT$ & Periodic spatial domain $\bM/(2\pi\bZ)^d$ & \pageref{dfn:basic}\\
			
			$\varPhi_{\kappa}$ & Solution of the regularized equation & \pageref{eq:spde_F_mild}\\
			
			$F_{\kappa,\mu}$ & Effective force at scale $\mu$ & \pageref{eq:effective_force_intro}\\
			
			$\rD F$ & Fr\'echet derivative of a functional $F$ & \pageref{dfn:functional}\\
			
			$\varPhi_{\kappa,\mu}$ & Coarse-grained process at scale $\mu$ & \pageref{eq:coarse_grained}\\
			
			$\zeta_{\kappa,\mu}$ & Remainder in the effective equation & \pageref{eq:def_zeta}\\
			
			$H_{\kappa,\mu}$ & Defect of the effective force flow equation & \pageref{eq:def_H}\\
			
			$[\mu]=\mu^{1/\sigma}$ & Spatial scale associated with the scale parameter $\mu$ & \pageref{rem:scale}\\
			
			$\cK$ & Space of signed measures on $\bM$ with finite total variation & \pageref{def:K_space}\\
			
			$\delta_x$ & Dirac delta measure at $x$ & \pageref{def:K_space}\\
			
			$\tilde\fP_\mu$ & Operator $(1-[\mu]^2\Delta)^{d+2}$ & \pageref{def:K}\\
			
			$\fP_\mu$ & Operator $\tilde\fP_\mu^3$& \pageref{def:K}\\
			
			$\tilde K_\mu$ & Regularizing kernel solving $\tilde\fP_\mu\tilde K_\mu=\delta_0$ & \pageref{def:K}\\
			
			$K_\mu$ & Regularizing kernel $\tilde K_\mu\ast\tilde K_\mu\ast\tilde K_\mu$ & \pageref{def:K}\\
			
			$G$ & Green function of $(1-\Delta)^{\sigma/2}$ & \pageref{rem:G}\\
			
			$G_\mu$ & Scale-decomposed Green kernel & \pageref{dfn:kernel_G}\\
			
			$\dot G_\mu$ & Scale derivative $\partial_\mu G_\mu$ & \pageref{dfn:kernel_G}\\
			
			$\chi,\chi_\mu$ & Cutoff functions in scale decomposition of $G$ & \pageref{dfn:kernel_G}\\
			
			$\tilde F_{\kappa,\mu}$ & Lifted effective force acting on regularized fields & \pageref{eq:tilde_F_H}\\
			
			$\tilde H_{\kappa,\mu}$ & Lifted defect functional associated with $H_{\kappa,\mu}$ & \pageref{eq:tilde_F_H}\\
			
			$\tilde G_\mu$ & Kernel $\fP_\mu^2 \dot G_\mu$ entering the fixed point map & \pageref{eq:tilde_G}\\
			
			$K_{\eta,\mu}$ & Kernel satisfying $K_\mu=K_{\mu,\eta}\ast K_\eta$ & \pageref{eq:inter_ref_kernel}\\
			
			$\tilde\varPhi_{\kappa,\mu}$ & Coarse-grained type field $\fP_\mu\ast\varPhi_{\kappa,\mu}$ & \pageref{eq:tilde_Phi_zeta}\\
			
			$\tilde\zeta_{\kappa,\mu}$ & Regularized remainder $K_\mu\ast\zeta_{\kappa,\mu}$ & \pageref{eq:tilde_Phi_zeta}\\
			
			$m_\flat$ & Maximal polynomial degree retained in the estimates & \pageref{eq:bound_F1}\\
			
			$\fQ$ & Fixed point map for the regularized effective equation & \pageref{eq:map_Q}\\
			
			$F^{i,m}_{\kappa,\mu}$ & Kernels of the effective force $F_{\kappa,\mu}$ & \pageref{eq:intro_ansatz}\\
			
			$H^{i,m}_{\kappa,\mu}$ & Kernels in the polynomial expansion of $H_{\kappa,\mu}$ & \pageref{eq:intro_ansatz_H}\\
			
			$F^{i,m}_{\kappa}$ & Kernels of the initial force $F_\kappa$ & \pageref{rem:force_coefficients}\\
			
			$\cV^m$ & Banach space of kernels with $m+1$ field variables & \pageref{dfn:sVm}\\
			
			$\|\cdot\|_{\cV^m}$ & Total-variation type norm on $\cV^m$ & \pageref{dfn:sVm}\\
			
			$\fB$ on $\cV$ & Trilinear map & \pageref{dfn:map_B}\\
			
			$\cP_m$ & Permutation group of $\{1,\ldots,m\}$ & \pageref{dfn:map_B}\\
			
			$i_\flat$ & Maximal order retained in the effective force expansion & \pageref{def:rho}\\
			
			$\alpha_\varepsilon$ & Regularity of the solution $\sigma-d/2-\varepsilon$ & \pageref{def:rho}\\
			
			$\gamma_\varepsilon$ & Gain exponent $3\sigma-d-3\varepsilon$ due to subcriticality & \pageref{def:rho}\\
			
			$\varrho_\varepsilon(i,m)$ & Power-counting exponent of the kernel $F^{i,m}$ & \pageref{def:rho}\\
			
			$\varrho(i,m)$ & Unperturbed exponent $\varrho_0(i,m)$ & \pageref{def:rho}\\
			
			$\varepsilon_\diamond$ & Small upper bound on the admissible values of $\varepsilon$ & \pageref{rem:rho}\\
			
			$\bE(\zeta_1,\ldots,\zeta_p)$ & Joint cumulant of random variables & \pageref{def:cumulants}\\
			
			$\bE(\zeta_q)_{q\in I}$ & Compact notation for joint cumulant & \pageref{dfn:notation_cumulants_distributions}\\
			
			$\vI$ & List of indices for cumulants of effective force kernels & \pageref{dfn:cumulants_eff_force}\\
			
			$(i,m,s,r)$ & Index recording order, number of field variables, scale derivative, and regularization derivative & \pageref{dfn:cumulants_eff_force}\\
			
			$n(\vI)$ & Length of a list of indices & \pageref{dfn:cumulants_eff_force}\\
			
			$i(\vI),m(\vI),s(\vI),r(\vI)$ & Aggregate parameters associated with a list of indices & \pageref{dfn:cumulants_eff_force}\\
			
			$\MM(\vI)$ & list of field multiplicities attached to a list of indices & \pageref{dfn:cumulants_eff_force}\\
			
			$E^\vI_{\kappa,\mu}$ & Cumulant of the corresponding effective force kernels & \pageref{dfn:cumulants_eff_force}\\
			
			$\cE^\MM$ & Space of translation-invariant kernels & \pageref{dfn:sVMM}\\
			
			$\|\cdot\|_{\cE^\MM}$ & Norm on the cumulant-kernel space $\cE^\MM$ & \pageref{dfn:sVMM}\\
			
			$\varrho_\varepsilon(\vI)$ & Sum of power-counting exponents over a list of indices & \pageref{dfn:varrho_I}\\
			
			$\fA$ on $\cE$ & Bilinear map appearing in the cumulant flow equation & \pageref{dfn:maps_A_B}\\
			
			$\fB$ on $\cE$ & Trilinear map appearing in the cumulant flow equation & \pageref{dfn:maps_A_B}\\
			
			$\mathsf m,\tilde{\mathsf m},\hat{\mathsf m},\check{\mathsf m}$ & Lists of field multiplicities in the definitions of $\fA$ and $\fB$ & \pageref{dfn:maps_A_B}\\
			
			$\fT K$ & Periodization of an integrable kernel $K$ from $\bM$ to $\bT$ & \pageref{dfn:periodization}\\
			
			$\delta^{[m]}$ & Dirac delta distribution on the diagonal in $\bM^{1+m}$ & \pageref{thm:cumulants}\\
			
			$\cX^a$ & Monomial kernel $(x-y)^a$ used in Taylor remainders & \pageref{thm:cumulants}\\
			
			$\fI V$ & Integral of a translation-invariant two-point kernel & \pageref{thm:cumulants}\\
			
			$\fR^a V$ & Remainder associated with Taylor expansion of $V$ & \pageref{thm:cumulants}\\
			
			$\hat E^i_{\kappa,\mu}$ & Scalar local part in the decomposition of cumulants & \pageref{eq:E_decomposition}\\
			
			$\check E^i_{\kappa,\mu},\tilde E^i_{\kappa,\mu}$ & Non-local components in the cumulant decomposition & \pageref{eq:E_decomposition}\\
			
		\end{longtable}
	\end{center}
}

\begin{acknowledgement}
I would like to thank the referees for their careful reading of these lecture notes. Their numerous insightful comments and suggestions greatly helped improve the clarity, accuracy, and overall quality of the exposition.
\end{acknowledgement}

\end{document}